\documentclass[11pt]{article}
\usepackage{amssymb,amsmath,amsfonts,amsthm,mathtools,bm,enumerate,color}
\usepackage{mathrsfs}
\usepackage[toc,page,title,titletoc,header]{appendix}
\usepackage{graphicx}

\usepackage{paralist}

\usepackage{tikz}
\usetikzlibrary{arrows,positioning,shapes.geometric}

\usepackage{algorithm,algorithmic}

\graphicspath{ {./figures/} }

\usepackage{indentfirst}
\usepackage{multicol}
\usepackage{booktabs}
\usepackage{url}
\usepackage[outdir=./]{epstopdf} 

\usepackage{hyperref}
\hypersetup{
    colorlinks=true, 
    linktoc=all,     
    linkcolor=blue,  
}
\setlength\topmargin{-2cm} \setlength\textheight{230mm}
\setlength\oddsidemargin{0mm}
\setlength\evensidemargin\oddsidemargin \setlength\textwidth{163mm}
\setlength\baselineskip{18pt}


\numberwithin{equation}{section}

\newtheorem{Theorem}{Theorem}[section]
\newtheorem{Lemma}[Theorem]{Lemma}
\newtheorem{Proposition}[Theorem]{Proposition}
\newtheorem{Assumption}{H.\!\!}

\newcommand*\diff{\mathop{}\!\mathrm{d}}

\theoremstyle{definition}
\newtheorem{Definition}{Definition}[section]

\theoremstyle{remark}
\newtheorem{Remark}{Remark}[section]

 \def\p{\partial} 
\def\to{\rightarrow}
 \def\ol{\overline}    
\def\Om{\Omega}  \def\om{\omega} 
 
\newcommand{\q}{\quad}

\def\l{\label}    \def\fa{\forall}
\def\b{\beta}  \def\a{\alpha} 
 
\def\eps{\varepsilon}

 \def\t{\times}  
\def\ms{\medskip}

\def \la{\langle} \def\ra{\rangle}

\def\cA{\mathcal{A}}

\def\cF{\mathcal{F}}

\def\cH{\mathcal{H}}
\def\cI{\mathcal{I}}

\def\cL{\mathcal{L}}

\def\cO{\mathcal{O}}
\def\cP{\mathcal{P}}

\def\cS{\mathcal{S}}

\def\cW{\mathcal{W}}
\def\cX{\mathcal{X}}
\def\cY{\mathcal{Y}}

\def\d{{\mathrm{d}}}

\def\bA{{\textbf{A}}}

\def\sB{\mathbb{B}}

\def\sE{{\mathbb{E}}}
\def\sF{{\mathbb{F}}}
\def\sI{{\mathbb{I}}}

\def\sN{{\mathbb{N}}}
\def\sP{\mathbb{P}}

\def\sR{{\mathbb R}}

\def\Lx{L^\mathrm{x}}
\def\Lxa{L^\mathrm{(x,a)}}

\DeclareMathOperator*{\argmin}{arg\,min}
\DeclareMathOperator*{\esssup}{ess\,sup}

\newcommand{\lc}
{\mathrel{\raise2pt\hbox{${\mathop<\limits_{\raise1pt\hbox
{\mbox{$\sim$}}}}$}}}

\newcommand{\gc}
{\mathrel{\raise2pt\hbox{${\mathop>\limits_{\raise1pt\hbox{\mbox{$\sim$}}}}$}}}

\newcommand{\ec}
{\mathrel{\raise2pt\hbox{${\mathop=\limits_{\raise1pt\hbox{\mbox{$\sim$}}}}$}}}

\def\bb{\begin{equation}} \def\ee{\end{equation}}
\def\bbn{\begin{equation*}} \def\een{\end{equation*}}

\def\beqn{\begin{eqnarray}}  \def\eqn{\end{eqnarray}}

\def\beqnx{\begin{eqnarray*}} \def\eqnx{\end{eqnarray*}}

\def\bn{\begin{enumerate}} \def\en{\end{enumerate}}

\def\bd{\begin{description}} \def\ed{\end{description}}

\makeatletter

\makeatother

\usepackage{biblatex}
\renewbibmacro{in:}{}
\addbibresource{references.bib}

\begin{document}

\title{Convergence Rates of 
Time Discretization in Extended Mean Field Control

}

\author{
Christoph Reisinger\thanks{
Mathematical Institute, University of Oxford, Oxford OX2 6GG, UK
 ({\tt \{christoph.reisinger, wolfgang.stockinger, maria.tsianni\}@maths.ox.ac.uk})}
\and
Wolfgang Stockinger\footnotemark[1]
\and Maria Olympia Tsianni\footnotemark[1]
\and
Yufei Zhang\thanks{Department of Mathematics, Imperial College London, Weeks Building, 16-18 Prince's Gardens, London, SW7 1NE, UK
({\tt yufei.zhang@imperial.ac.uk})}
}
\date{}

\maketitle

\noindent\textbf{Abstract.} 
Piecewise constant control approximation provides a practical framework for designing numerical schemes of continuous-time control problems. We analyze the accuracy of  such approximations for extended mean field control (MFC) problems, where the dynamics and costs depend on the joint distribution of states and controls. For linear–convex extended MFC problems, we show that the optimal control is $1/2$-H\"older continuous in time. Using this regularity, we prove that the optimal cost of the continuous-time problem can be approximated by piecewise constant controls with order $1/2$, while the optimal control itself can be approximated with order $1/4$. For general extended MFC problems, we further show that, under sufficient regularity of the value functions, the value functions converge with an  improved first-order rate,
matching the best-known rate  for classical control problems without mean field interaction,   and consistent with   the numerical observations for MFC of  Cucker–Smale models.

\medskip
\noindent
\textbf{Key words.} 
Controlled McKean--Vlasov diffusion,
extended mean field control,
piecewise constant control,
time discretization, 
error estimate,
path regularity.

\ms
\noindent
\textbf{AMS subject classifications.} 
49N80, 49N60, 60H35, 65L70


\medskip


\section{Introduction}\l{sec:intro}

\paragraph{Extended mean field control (MFC) problem.}

 Let 
 $T>0$ be a  given  terminal time,
$(\Om,\cF,\sP)$ be a complete  probability space
on which a  $d$-dimensional Brownian motion $(W_t)_{t\in [0,T]}$
is defined,
 $\sF = (\cF_t)_{t\in [0,T]}$ be the natural filtration of $W$
 augmented with an independent $\sigma$-algebra $\cF_0$,
and 
 $\cA$ be the set of 
 square--integrable 
$\sF$-progressively measurable processes
$\a=(\a_t)_{ t\in [0,T]}$ taking values in 
an   action set
$\bA\subset \sR^k$.
For any  initial 
state $\xi_0\in L^2(\cF_0;\sR^n)$
and control $\a\in \cA$,
  consider  the state process $X^\alpha$ governed by
the following controlled  McKean--Vlasov diffusion:
\bb\l{eq:mfcE_fwd}
\d X^\a_t=
b(t,X^\a_t,\a_t,\sP_{(X^\a_t,\a_t)})\, \d t
+\sigma(t,X^\a_t,\a_t, \sP_{(X^\a_t,\a_t)})\, \d W_t,
\q t\in [0,T];
\q X^\a_0=\xi_0, 
\ee
where $b$ and $\sigma$ are given measurable  functions
taking values in $\sR^n$ and $\sR^{n\times d}$, respectively,
and 
$\sP_{U}$ denotes   the  law of a given random variable $U$.  
The value  function of the extended MFC  problem is defined by
\bb\l{eq:mfcE}
V(\xi_0)=\inf_{\a\in \cA} J(\a;\xi_0)
\q
\textnormal{with}
\;
J(\a;\xi_0)=\sE\bigg[
\int_0^T f(t,X^\a_t,\a_t,\sP_{(X^\a_t,\a_t)})\, \d t+g(X^\a_T,\sP_{X^\a_T})
\bigg],
\ee
where 
  $f$ and   $g$
 are given real valued  cost functions.
Precise conditions on $\bA$, $b$, $\sigma$, $f$ and $g$ will be given in Section \ref{sec:main_result}.

In the above extended MFC problems,
 both the state coefficients and the cost function  involve mean field interactions through the joint distribution of the state and control processes. Such extended MFC problems characterize the large-population limit of control systems in which agents interact weakly through both their states and controls
\cite{
acciaio2019,  
djete2022extended,picarelli2025extended},
and have numerous applications in economics, biology, and social interactions  (see e.g.~\cite{
  carmona2018a,   acciaio2019,  gobet2019, burzoni2020}). 
When 
the state coefficients 
and cost functions 
depend only on  the marginal  distribution of the state process,
  \eqref{eq:mfcE} reduces to classical  MFC problems 
{(without control interactions)}
studied in 
\cite{chassagneux2014,  carmona2015, carmona2018a,carmona2019}.

\paragraph{Piecewise constant control approximation.}
To solve \eqref{eq:mfcE}, one typically relies on numerical schemes, as explicit solutions are rarely obtainable for (extended) MFC problems. 
A common strategy is to   approximate \eqref{eq:mfcE}  by discrete-time control problems  with  piecewise constant controls.
Specifically,
for a  given 
 partition  $\pi=\{0=t_0<\cdots<t_N=T\}$ of $[0,T]$,
consider the following discrete--time approximation of 
     \eqref{eq:mfcE}:
 \bb\l{eq:mfcE_constant}
V_\pi(\xi_0)=\inf_{\a\in \cA_\pi} J_\pi(\a;\xi_0),
\ee
where  
$\cA_{\pi}\subset \cA $ is the   subset of controls   that are constant on each subinterval $[t_i,t_{i+1})$ in $\pi$:
\begin{align}\l{eq:A_pi}
\cA_{\pi}\coloneqq
\Big\{\alpha\in\cA:  
{\; \forall \omega \in \Omega } \;  \exists a_i\in \bA, \, i=0,\ldots, N-1,  \;  \text{ s.t. }  \alpha_s{(\omega)} \equiv \sum^{N-1}_{i=0} a_i \bm{1}_{[t_i,t_{i+1})}(s)
\Big\},
\end{align}
and for each $\a\in \cA_{\pi}$, $J_\pi(\a;\xi_0)$ is the discretized cost functional defined by
\bb\l{eq:J_pi}
J_\pi(\a;\xi_0)\coloneqq\sE\bigg[\sum_{i=0}^{N-1}
\int_{t_i}^{t_{i+1}} f(t_i,X^{\a,\pi}_{t_i},\a_{t_i},\sP_{(X^{\a,\pi}_{t_i},\a_{t_i})})\, \d t+g(X^{\a,\pi}_T,\sP_{X^{\a,\pi}_T})
\bigg],
\ee
and $X^{\a,\pi}$ is governed by  the  Euler--Maruyama  approximation of  \eqref{eq:mfcE_fwd}:
$X^{\a,\pi}_0=\xi_0$ and  for all $i\in \{0,\ldots,N-1\}$, 
\begin{align}\l{eq:mfcE_fwd_euler}
\begin{split}
X^{\a,\pi}_{t_{i+1}}
&=X^{\a,\pi}_{t_{i}}
+
b(t_i,X^{\a,\pi}_{t_i},\a_{t_i},\sP_{(X^{\a,\pi}_{t_i},\a_{t_i})})\, (t_{i+1}-t_i)
\\
&\quad 
+\sigma(t_i,X^{\a,\pi}_{t_i},\a_{t_i},\sP_{(X^{\a,\pi}_{t_i},\a_{t_i})})\, (W_{t_{i+1}}-W_{t_{i}}).
\end{split}
\end{align}
Based on 
the discrete-time approximation \eqref{eq:mfcE_constant},  
various numerical schemes have been proposed to solve \eqref{eq:mfcE}.
For instance, one can approximate the marginal laws $(\sP_{(X^{\a,\pi}_{t_i},\a_{t_i})})_{i=0}^N$ in \eqref{eq:J_pi}-\eqref{eq:mfcE_fwd_euler} using particle methods and solve the resulting finite-dimensional control problem with neural networks;  
see \cite{carmona2019,   pham2022mean, picarelli2025extended, ReiStoZha1}  and the references therein.

Given the central role of \eqref{eq:mfcE_constant} in solving \eqref{eq:mfcE}, a natural question is how accurately the continuous-time problem \eqref{eq:mfcE} can be approximated by the discrete-time problem \eqref{eq:mfcE_constant} with piecewise constant controls. 
Specifically,  we would answer the following two questions:
\begin{description}
\item[Q1]: What is the convergence rate of $|V_\pi(\xi_0)-V(\xi_0)|$ in terms of the stepsize $|\pi|$?
\item[Q2]: How does a (sub-)optimal control  of \eqref{eq:mfcE_constant}
approximate the optimal control of
\eqref{eq:mfcE}?
\end{description}

To the best of our knowledge,
there is no published work on  
 the accuracy of piecewise
constant policy approximation for extended  MFC problems. 
Moreover,  even for classical control problems without mean field interactions,  convergence of the controls (as in  Q2)   has not been established.

The convergence of value functions has been studied by   \cite{krylov1999, jakobsen2019,carmona2019,legrand2023convergence}    in settings simpler than those considered here.   
\cite{krylov1999}
analyzed classical control problems  without mean field interactions,
and 
established an order $1/6$  convergence rate  of   the value functions, assuming the coefficients are Lipschitz continuous and uniformly bounded. \cite{jakobsen2019} improved the  rate  to order $1/4$  under the same assumptions, and further obtained   an optimal order $1$ convergence by assuming the coefficients are four times differentiable with bounded derivatives. \cite{legrand2023convergence} shows order 1/3 under additional semi-convexity and semi-concavity assumptions on the data.
\cite{carmona2019} studies classical MFC problems with only drift  controls, restricts the admissible controls to closed-loop controls induced by sufficiently regular feedback maps, and proves an order $1/2$ convergence of the value functions. The regularity assumptions implicitly require all cost functions to be three times differentiable with bounded, Lipschitz-continuous derivatives (see Remark \ref{rmk:regularity_comparison} for details). 

Note that the boundedness and high-order differentiability assumptions on the state coefficients and cost functions
imposed in \cite{krylov1999, jakobsen2019,carmona2019,legrand2023convergence} 
are restrictive, as they exclude linear dynamics and quadratic costs found in the popular linear-quadratic models  \cite{pham2018}, as well as more general convex costs studied in \cite{carmona2015, acciaio2019, gobet2019, 
szpruch2021exploration,
guo2023reinforcement}.

\paragraph{Our work.}
This paper consists of two parts. 
 
\begin{itemize}
 
\item
We first analyze a class of linear–convex extended MFC problems, where the drift coefficient depends linearly on the state and control, the diffusion coefficient is uncontrolled, and the cost functions are convex with Lipschitz continuous derivatives. 
In this setting, we establish the optimal control of \eqref{eq:mfcE}
is $1/2$-H\"older continuous in time. 
Leveraging this time regularity, we show that the optimal value
and control
of  \eqref{eq:mfcE_constant} converge to the optimal  value and control  of \eqref{eq:mfcE} with orders $1/2$ 
and $1/4$, respectively
(Theorems \ref{thm:PCPT_discrete_value} and \ref{thm:control_PCPT_discrete}).

These convergence results are novel even for standard control problems (without mean field interactions) and for MFC problems (without control interactions).

\item 
For general extended MFC problems, we show that, under sufficient regularity of the value functions, the value functions corresponding to piecewise constant controls converge with an improved first-order rate to the value function of \eqref{eq:mfcE} (Theorem \ref{improvedorder}).  This extends the optimal first-order convergence result in \cite{jakobsen2019} to the present setting with mean field dependence on both state and control.
We confirm this first-order convergence through numerical experiments on multi-dimensional MFC problems arising from the consensus control of nonlinear Cucker–Smale models (see Section \ref{numericalresults}). 

 To the best of our knowledge, this is the first time a first-order convergence result for piecewise constant control approximations has been established in MFC problems.
\end{itemize}

\paragraph{Our approaches and most related works.} 

Note that 
\cite{krylov1999, jakobsen2019,legrand2023convergence}
establish the time discretization errors of classical control problems  by 
applying  It\^{o}'s lemma and a dynamic programming principle.
To handle coefficients without sufficiently high regularity, \cite{krylov1999, jakobsen2019,legrand2023convergence} employ a sophisticated ``shaking the coefficients" technique, pioneered by Krylov, to mollify the coefficients and balance the regularization error with the time discretization error, resulting in the aforementioned fractional convergence orders.

This approach cannot be easily adapted to analyze MFC problems with unbounded   coefficients that are not twice differentiable,   in particular the linear–convex MFC problems studied in our paper. 
Indeed, due to the dependence of the coefficients on marginal distributions, it is well-known that one must lift the coefficients and value functions to mappings defined on the infinite-dimensional space of probability measures to recover a dynamic programming principle (see, e.g., \cite{pham2018}). For these infinite-dimensional spaces, there is currently no known regularization technique with a quantifiable regularization error analogous to the finite-dimensional setting as in \cite{krylov1999, jakobsen2019}.

Instead, we exploit the linear-convex structure of the control problems, and  characterize the optimal control  of \eqref{eq:mfcE}
via the    Pontryagin maximum principle \cite{acciaio2019}.
Leveraging this characterization, we use Malliavin calculus to establish the time regularity of the optimal control of of \eqref{eq:mfcE}.
This   allows  for quantifying  the time discretization error   of the optimal control without imposing strong regularity assumptions as in \cite{carmona2019, jakobsen2019}.

A key step is to analyze the solution regularity of the Pontryagin system, which is technically involved due to the mean field interactions in the controls. 
To see it, 
recall that
in the setting
with uncontrolled diffusion coefficients, 
by  \cite[Remark 3.4]{acciaio2019},
an optimal  control $\hat{\a}$  of \eqref{eq:mfcE}
satisfies 
\begin{equation}\label{eq:nonMarkovian_optimal}
\hat{\a}_t=\argmin\lbrace \mathbb{E}[H^{\textrm{re}}(t,X_t,\beta,\mathbb{P}_{(X_t,\beta)},Y_t)] \mid \ \beta \in L^2(\mathcal{F}_t;\bA) \rbrace,
\q \textnormal{$ \d \sP\otimes \d t$-a.e.},
\end{equation}
where $H^{\textrm{re}}$ is the associated   Hamiltonian of \eqref{eq:mfcE} (see \eqref{eq:hamiltonian_re}), 
$X$ is the state process controlled by $\hat \alpha$,
and 
and $Y$ is an adjoint process satisfying 
an McKean-Vlasov backward  stochastic differential equation,
which is 
generally non-Markovian since $\hat \alpha$ may be non-Markovian.  
The main issue is that   
for extended MFC problems,  
\eqref{eq:nonMarkovian_optimal}
in general does not imply 
$\hat{\a}_t=\psi(t,X_t,Y_t,\sP_{(X_t,Y_t)})$,
with $\psi$ being the pointwise minimizer 
 of   $H^{\textrm{re}}$, 
 as highlighted by the linear-quadratic  example
 in \cite[Remark 4.2]{acciaio2019}. 
 Consequently, unlike in classical MFC problems studied in \cite{carmona2015}, one cannot use the pointwise minimizer $\psi$ of $H^{\textrm{re}}$ to reduce the Pontryagin system to a Markovian system. 

We show that, under suitable conditions, the optimal control can be represented as a Lipschitz function of the state and adjoint processes together with their marginal laws.  
The desired feedback  function 
can be obtained
either by constructing it from a modified Hamiltonian (Propositions \ref{prop:mfc_hat_a} and \ref{prop:control_measure}) or by explicitly solving the first-order condition
(Proposition   \ref{prop:lq}).
This allows for reducing 
 the non--Markovian Pontryagin system
 to a 
 McKean-Vlasov backward  stochastic differential equation
 (MV-FBSDE).
We analyze the well-posedness and   stability   of the coupled  MV-FBSDE
by adapting the continuation method in \cite{bensoussan2015, carmona2015, guo2023reinforcement},
and further establish the H\"{o}lder regularity of the solutions 
by extending the path regularity results for decoupled FBSDEs in \cite{zhang2017}.

In the smooth setting, we extend the analysis
in   \cite{jakobsen2019}   to extended MFC  problems,
working under sufficient regularity assumptions on the data and value function such that no regularization is required.
Using an iterated It{\^o} expansion of the value function in the time and measure components,   we show that the local error on each subinterval is of second order in the step size. 
Moreover, we  
derive a sub-dynamic programming principle (Lemma \ref{lemma1}) for the extended MFC problem with piecewise constant controls, accurate to first order in the step size. With these  in hand, we aggregate the local errors over time and obtain the first order convergence.

\paragraph{Notation.}
For any given $n\in \sN$ and $x\in \sR^n$, we  denote by 
  $\sI_n$  the $n\t n$ identity matrix,
  by ${0}_{n}$ the  zero element
  of  $\sR^{n}$
   and
by
 $\bm{\delta}_{x}$
 the Dirac measure supported at $x$.
We  denote by $\la \cdot,\cdot\ra$
the usual inner product in a given Euclidean space
and by   $|\cdot|$ the norm induced by $\la \cdot,\cdot\ra$,
which in particular satisfies  for all 
$n,m,d\in \sN$
and
$\theta_1=(x_1,y_1,z_1),\theta_2=(x_2,y_2,z_2)\in \sR^n\t \sR^m\t \sR^{m\t d}$
that
$\la z_1,z_2\ra =\textrm{trace}(z^*_1z_2)$
and 
$\la \theta_1,\theta_2\ra =\la x_1,x_2\ra+\la y_1,y_2\ra+\la z_1,z_2\ra$.

We   introduce the following spaces:
for each $p \ge 1$, $k \in \sN$, $t\in [0,T]$
and Euclidean space $(E,|\cdot|)$,
$L^p(\Om; E)$ is the space  of 
 $E$-valued
$\cF$-measurable
random variables $X$ satisfying
$\|X\|_{L^p}=\sE[|X|^p]^{1/p}<\infty$;
$L^p(\cF_t; E)$ is the subspace  of $L^p( \Om;E)$
containing all 
$\cF_t$-measurable
random variables;
$\cS^p(t,T;E)$ is the space of 
$\sF$-progressively  measurable processes
$Y: \Om\t [t,T]\to E$  
satisfying $\|Y\|_{\cS^p}=\sE[\esssup_{s\in [t,T]}|Y_s|^p]^{1/p}<\infty$;
$\cH^p(t,T; E)$ is the space of 
  $\sF$-progressively measurable
 processes 
$Z: \Om\t [t,T]\to E$   
 satisfying $\|Z\|_{\cH^p}=\sE[(\int_t^T|Z_s|^2\,\d s)^{p/2}]^{1/p}<\infty$.
 To simplify the  notation,
 we 
denote 
  $\cS^p=\cS^p(0,T;E)$
  and $\cH^p=\cH^p(0,T;E)$.

Given an    
Euclidean space $(E, |\cdot|)$,
we denote by $\cP_2(E)$  the  space of  probability measures 
$\mu$
on   $E$ satisfying $\|\mu\|_2=(\int_E |x|^2\,\d \mu(x))^{1/2}<\infty$,
endowed with the   Wasserstein metric defined by: 
$$
\cW_2(\mu_1,\mu_2)
\coloneqq \inf_{\kappa\in \Pi(\mu_1,\mu_2)} \left(\int_{E\t E}|x-y|^2\d \kappa( x, y)\right)^{1/2},
\q \mu_1,\mu_2\in \cP_2(E),
$$
where $\Pi(\mu_1,\mu_2)$ is the set of all couplings of $\mu_1$ and $\mu_2$, i.e.,
$\kappa\in \Pi(\mu_1,\mu_2)$ is a probability measure on $E\t E$ such that $\kappa(\cdot\t E)=\mu_1$ 
and $\kappa(E\t \cdot)=\mu_2$.
For a given function $h:\cP_2(\sR^n\t \sR^k)\to \sR$
and a measure  $\eta\in \cP_2(\sR^n\t \sR^k)$
with marginals $\mu\in \cP_2(\sR^n)$, $\nu\in \cP_2(\sR^k)$,
 we  denote by $\p_\eta h(\eta)(\cdot):\sR^n\t \sR^k\to \sR^n\t \sR^k$
the  L-derivative of $h$ at $\eta$ 
and by 
$(\p_\mu h(\eta),\p_\nu h(\eta))(\cdot):\sR^n\t \sR^k\to \sR^n\t \sR^k$
the partial L-derivatives of 
$h$ with respect to  the marginals;
see e.g. \cite[Section 2.1]{acciaio2019} or \cite[Chapter 5]{carmona2018a}
for detailed definitions.

Finally,
we    denote by $C\in [0,\infty)$ a generic constant
throughout this paper, which is independent of the initial condition $\xi_0$,
though it may depend on the constants appearing in the assumptions  
  and may take a different value at each occurrence.


\section{Main results}
\label{sec:main_result}

 This section outlines the model assumptions and presents the main convergence rate results for the discrete-time approximation \eqref{eq:J_pi}–\eqref{eq:mfcE_fwd_euler}.
We establish two distinct sets of results: 
\begin{itemize}
    \item For   linear–convex   problems with only drift controls, 
    we prove a half-order convergence rate by deriving the optimal time regularity of the optimal control via the maximum principle.
    \item For general control problems with a sufficient   regular   value function, 
     we   prove the optimal first-order convergence rate using a dynamic programming approach.
\end{itemize}

\subsection{Error analysis for linear-convex extended MFC problems}
\label{sec:error_linear_convex}

\subsubsection{Stochastic maximum principle} 
In this section, 
 we impose the following structural assumptions on the coefficients of \eqref{eq:mfcE_fwd} and \eqref{eq:mfcE} to facilitate the application of the maximum principle.

\begin{Assumption}\l{assum:mfcE}
Let $\bA\subset \sR^k$ be a nonempty closed convex set, 
   $(b,\sigma):[0,T]\t \sR^n\t \sR^k\t  \cP_2(\sR^n\t \sR^k)\to \sR^n\times \sR^{n\t d} $,
$f:[0,T]\t \sR^n\t \sR^k\t  \cP_2(\sR^n\t \sR^k)\to\sR$ 
and $g:\sR^n\t  \cP_2(\sR^n)\to\sR$ 
be measurable functions satisfying the following properties:
\begin{enumerate}[(1)]
\item \label{item:mfcE_lin}
$b$ and  $\sigma$ are of the following affine form: for all $(t,x,a, \eta)\in [0,T]\t \sR^n\t \sR^k\t   \cP_2(\sR^n \times \sR^k)$,
\begin{align}
b(t,x,a,\eta)&=b_0(t)+b_1(t)x + b_2(t)a+b_3(t)\bar{\eta},
\label{eq:b_affine}\\
\sigma(t,x, a,\eta)&=\sigma_0(t)+\sigma_1(t)x+\sigma_2(t)\ol{\pi_1\sharp \eta},
\end{align}
where 
 $( b_0, b_1, b_2, b_3,\sigma_0,\sigma_1,\sigma_2)\in L^\infty(   [0,T];   \sR^n\times  \sR^{n\t n}\t  \sR^{n\t k} \t\sR^{n\t (n+k)} \t \sR^{n\t d}\t \sR^{(n\t d)\t n}\t \sR^{(n\t d)\t n})$,  
$\bar{\eta} $
is  the first moment of   $\eta$,
and 
$\pi_1\sharp \eta$ is the first marginal of   $\eta$.
With a slight abuse of notation, we identify $\sigma$ as a function $\sigma:
[0,T]\t \sR^n \t  \cP_2(\sR^n)\to  \sR^{n\t d}$.


\item \l{item:mfcE_growth}
$ f(\cdot,0,0,\bm{\delta}_{0_{n+k}})\in L^\infty(0,T)$,
$f$ and  $g$ are  differentiable with respect to $(x,a,\eta)$ and $(x,\mu)$, respectively,
and all derivatives are of linear growth  and Lipschitz continuous. 

That is, 
there exists  $\hat{L}\in [0,\infty)$ such that 
for all $R\ge 0$ and all $(t,x,a,\mu,\eta)$ with 
$|x|$, $ |a|$, $ \|\mu\|_2$, $ \| \eta \|_2\le R$,
$|\p_xf(t,x,a,\eta)|+|\p_a f(t,x,a,\eta)|+|\p_x g(x,\mu)|\le \hat{L}(1+R)$,
the $L^2(\sR^{n}\t\sR^k,\eta)$-norms
of the maps $(x',a') \mapsto \p_\mu f(t,x,a,\eta)(x',a')$, $(x',a') \mapsto \p_\nu f(t,x,a,\eta)(x',a')$  
are bounded by $\hat{L}(1+R)$,
and the $L^2(\sR^{n},\mu)$-norm of the map
  $x'\mapsto \p_\mu g(x,\mu)(x')$
is bounded by $\hat{L}(1+R)$.
\color{black}
%
Moreover, there exists $\tilde{L}\in [0,\infty)$ such that 
for all $t\in [0,T]$,
$\p_{x}f(t,\cdot)
:\sR^n\t \bA\t \cP_2(\sR^n \times \sR^{k}) \to \sR^n$,
$\p_{a}f(t,\cdot)
:\sR^n\t \bA\t \cP_2(\sR^n \times \sR^{k}) \to \sR^k$
and 
$\p_xg(\cdot):\sR^n\t \cP_2(\sR^n)\to \sR^n$
are 
$\tilde{L}$-Lipschitz continuous.
Moreover, for any $(t,x,a,\eta,\mu)\in [0,T]\t \sR^n\t \sR^k\t \cP_2(\sR^n \times \sR^{k})\t \cP_2(\sR^n)$,
there exist versions of 
$\p_\mu f(t,x,a,\eta)(\cdot)$,  $\p_\nu f(t,x,a,\eta)(\cdot)$ and  $\p_\mu g(x,\mu)(\cdot)$
such that 
\begin{align*}
& (x,a,\eta,\mu,x',a')\in \sR^n\t \bA \t \cP_2(\sR^n \times \sR^{k})\t \cP_2(\sR^n)\t \sR^n \t \bA   \\
& \quad \mapsto (\p_\mu f(t,x,a,\eta)(x',a'), \p_\nu f(t,x,a,\eta)(x',a'),\p_\mu g(x,\mu)(x'))\in \sR^n \t \sR^k \t \sR^n,
\end{align*}
is $\tilde{L}$-Lipschitz continuous.

\item\l{item:mfcE_convex}
 $f$ and $g$ are  convex 
in  $(x,a,\eta)$ 
and  $(x,\mu)$, respectively. That is, 
 there exists $\lambda_1,\lambda_2\ge 0$
such that   $\lambda_1+\lambda_2>0$ and 
for all $t\in [0,T]$, $(x,a,\eta),(x',a',\eta')\in \sR^n\t \bA \t \cP_2(\sR^n \t \sR^{k})$,
\begin{align*}
&f(t,x',a',\eta')-f(t,x,a,\eta)-
\la \p_{x}f(t,x,a,\eta), x'-x\ra 
-\la \p_{a}f(t,x,a,\eta), a'-a\ra 
\\
&\q 
-\tilde{\sE}[\la\p_\mu f(t,x,a,\eta)(\tilde{X},\tilde{\alpha}),\tilde{X}'-\tilde{X}\ra 
+ \la\p_\nu f(t,x,a,\eta)(\tilde{X},\tilde{\alpha}),\tilde{\alpha}'-\tilde{\alpha}\ra ]
\\
&\q
\ge \lambda_1|a'-a|^2+\lambda_2\tilde{\sE}[|\tilde{\alpha}'-\tilde{\alpha}|^2],
\end{align*}
whenever  $(\tilde{X},\tilde{\a}),(\tilde{X}',\tilde{\a}')\in L^2(\tilde{\Om},\tilde{\cF},\tilde{\sP};\sR^n\t\sR^k)$
with distributions $\eta$ and $\eta'$, respectively.
Moreover, for all $(x,\mu),(x',\mu')\in \sR^n \t \cP_2(\sR^n)$,
\begin{align*}
g(x',\mu')-g(x,\mu)-\la \p_{x}g(x,\mu), x'-x \ra 
-\tilde{\sE}[\la\p_\mu g(x,\mu)(\tilde{X}),\tilde{X}'-\tilde{X}\ra ]\ge 0,
\end{align*}
whenever  $\tilde{X},\tilde{X}'\in L^2(\tilde{\Om},\tilde{\cF},\tilde{\sP};\sR^n)$
with distributions $\mu$ and $\mu'$, respectively.
Above and hereafter, we denote by $\tilde{\sE}$  the expectation on  $(\tilde{\Om},\tilde{\cF},\tilde{\sP})$.
\end{enumerate}
\end{Assumption}

\begin{Remark}\l{rmk:mfcE_regularity}
(H.\ref{assum:mfcE})    extends
  Assumption ``Control of MKV Dynamics" in 
\cite{carmona2018a} 
to the present setting with   mean field interactions through controls.
Note that (H.\ref{assum:mfcE}) only requires 
  the cost function $f$ to be strongly convex
either in the control variable or in its law,
which allows for considering 
cost functions that do  not explicitly depend on the state of the controls 
(see e.g.~Proposition \ref{prop:control_measure}).
Condition (H.\ref{assum:mfcE}(\ref{item:mfcE_lin})) requires  that the diffusion coefficient is  uncontrolled, which is important for  analyzing  the regularity of the optimal control 
via a probabilistic approach
(see Theorem  \ref{TH:ControlRegularity}).

Under (H.\ref{assum:mfcE}),
$f$  and $g$ are   
  locally Lipschitz continuous
and  at most of quadratic growth 
with respect to 
$(x,a,\eta)$ and 
$(x,\mu)$, respectively.
Indeed, 
by  (H.\ref{assum:mfcE}(\ref{item:mfcE_growth})), 
there exists   $C\ge 0$ such that 
for all $(x,a,\eta),(x',a',\eta')\in \sR^n\t \bA \t \cP_2(\sR^n \t \sR^{k})$, 
\begin{align}
\label{eq:f_local_lipschitz}
\begin{split}
&|f(t,x,a,\eta)-f(t,x',a',\eta')|
\\
&\le C(1+|x|+|x'|+|a|+|a'|+\|\eta\|_2+\|\eta'\|_2)(|(x,a)-(x',a')|+\cW_2(\eta,\eta')),
\end{split}
\end{align}
which along with the  boundedness of $|f(t,0,0,\bm{\delta}_{0_{n+k}})|$ implies the quadratic growth of $f$.
Similar arguments yield the desired properties of   $g$.

\end{Remark}

Under Condition (H.\ref{assum:mfcE}),
for any given  
initial state $\xi_0\in L^2(\cF_0;\sR^n)$
and
   control $\a\in \cA$,
the  state process $X^\a\in \cS^2(\sR^n)$ is well-defined by \eqref{eq:mfcE_fwd},   
and the cost   $J(\a;\xi_0)$
in \eqref{eq:mfcE}
is finite.

We    analyze  the   regularity of the optimal control   of 
     \eqref{eq:mfcE},
     which will be used to    quantify the time discretization error.  
 We adopt   a probabilistic approach via the stochastic maximum principle.    
To this end, 
define   the Hamiltonian
$H:[0,T]\t \sR^n\t \sR^k\t  \cP_2(\sR^n\t \sR^k)\t \sR^n\t \sR^{n\t d}\to \sR$ by:
\bb\l{eq:mfcE_hamiltonian}
H(t,x,a,\eta,y,z)\coloneqq\la b(t,x,a,\eta), y\ra +\la \sigma(t,x,\pi_1\sharp\eta),z\ra+f(t,x,a,\eta),
\ee 
and the reduced Hamiltonian  
$H^{\textrm{re}}:[0,T]\t \sR^n\t \sR^k\t  \cP_2(\sR^n\t \sR^k)\t \sR^n\to \sR$
  by:
\bb\label{eq:hamiltonian_re}
H^{\textrm{re}}(t,x, a,\eta,y)\coloneqq \la b(t,x,a,\eta), y\ra +f(t,x,a,\eta).
\ee
The following lemma 
characterizes   
the optimal control using
a   coupled forward-backward system,
which is generally non-Markovian since the optimal control itself may be non-Markovian.

\begin{Lemma}
\label{lemma:maximum_principle}
    Suppose (H.\ref{assum:mfcE}) holds.  
   An $\a\in \cA$ is an optimal control of \eqref{eq:mfcE} if and only if 
 for all $ a \in \bA$ and   $\mathrm{d}\mathbb{P} \otimes \mathrm{d}t$\ -a.e., 
\begin{align}\l{eq:opti_re}
\la \p_a H^{\textrm{re}}(t, X^\a_t,\a_t, \sP_{(X^\a_t,\a_t)},Y^\a_t) 
+
\tilde{\mathbb{E}} [\p_\nu  H^{\textrm{re}}(t, \tilde{X}^\a_t,\tilde{\a}_t, {\sP}_{({X}^\a_t,{\a}_t)},\tilde{Y}^\a_t)(X^\a_t,\alpha_t)], \alpha_t -a \ra\leq 0, \quad 
\end{align}
where  $X^\alpha \in \cS^2(\sR^n)$ 
satisfies \eqref{eq:mfcE_fwd}, and 
 $(Y^\a,Z^\a)\in \cS^2(\sR^n)\t \cH^2(\sR^{n\t d})$  
satisfies  
 for all $t\in [0,T]$,  
 \begin{align}\l{eq:mfcE_bsde_nonMarkov}
\begin{split}
\d Y^{{\a}}_t&=-\big(\p_x H(\theta^\a_t,Y^{{\a}}_t,Z^{{\a}}_t)
+\tilde{\sE}[\p_\mu H(\tilde{\theta}^\a_t,\tilde{Y}^{{\a}}_t,\tilde{Z}^{{\a}}_t)(X^{{\a}}_t,{\a}_t)]\big)\,\d t
+Z^{{\a}}_t\,\d W_t,
\\
Y^{{\a}}_T&=\p_x g(X^{{\a}}_T,\sP_{X^{{\a}}_T})+\tilde{\sE}[\p_\mu g(\tilde{X}^{{\a}}_T,\sP_{X^{{\a}}_t})(X^{{\a}}_T)],
\end{split}
\end{align}
where   
$\theta^\a_t \coloneqq (t,X^{{\a}}_t,{\a}_t,\sP_{(X^{{\a}}_t,{\a}_t)})$ 
and  the tilde notation refers  to an independent copy. 
\end{Lemma}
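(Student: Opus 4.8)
\emph{Strategy and setup.} I would prove the two implications by a convex perturbation of the control combined with a duality (adjoint) computation --- the classical Pontryagin argument for McKean--Vlasov control (cf.\ \cite[Ch.~6]{carmona2018a}, \cite{acciaio2019}), adapted to the dependence on the joint law $\sP_{(X^\a_t,\a_t)}$. A preliminary observation: under (H.\ref{assum:mfcE}) and the affine form of $(b,\sigma)$, the driver of \eqref{eq:mfcE_bsde_nonMarkov} is affine in $(y,z)$ with data of linear growth in $(X^\a,\a,\sP_{(X^\a,\a)})$, so $(Y^\a,Z^\a)\in\cS^2(\sR^n)\t\cH^2(\sR^{n\t d})$ is well defined by standard McKean--Vlasov BSDE theory. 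For necessity, assume $\a$ is optimal; for $\beta\in\cA$ and $\eps\in(0,1]$ put $\a^\eps\coloneqq\a+\eps(\beta-\a)\in\cA$ (admissible since $\bA$ is closed and convex). Because $b$ and $\sigma$ are affine, $X^{\a^\eps}-X^\a$ solves a \emph{linear} McKean--Vlasov SDE with zero initial datum, yielding the exact identity $X^{\a^\eps}=X^\a+\eps V^\beta$ for some $\eps$-independent $V^\beta\in\cS^2(\sR^n)$. I would differentiate $\eps\mapsto J(\a^\eps;\xi_0)$ at $\eps=0$, justifying the limit by dominated convergence from the linear-growth/Lipschitz bounds of (H.\ref{assum:mfcE}(\ref{item:mfcE_growth})), the $\cS^2$/$L^2$ bounds on $(X^\a,\a,V^\beta,\beta)$, the estimate $\cW_2(\sP_{(X^{\a^\eps}_t,\a^\eps_t)},\sP_{(X^\a_t,\a_t)})\le C\eps$, and the definition of the L-derivative. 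Optimality then gives $\frac{\d}{\d\eps}J(\a^\eps;\xi_0)\big|_{\eps=0}\ge0$, an inequality featuring $\p_xf,\p_af,\p_\mu f,\p_\nu f$ along $(\theta^\a_t)$ and $\p_xg,\p_\mu g$ at time $T$, paired with $V^\beta$ and $\beta-\a$; a Fubini / ``tilde-swap'' identity (for i.i.d.\ copies sharing the common law $\sP_{(X^\a_t,\a_t)}$) moves the inner expectations onto the $V^\beta$ and $\beta-\a$ factors.

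\emph{Duality and the pointwise condition.} Next I apply It\^o's formula to $\la Y^\a_t,V^\beta_t\ra$. By the design of the adjoint equation, the $b_1,\sigma_1$ (state) pieces of the cross-terms cancel against the $\p_xH$ term of \eqref{eq:mfcE_bsde_nonMarkov}, and the $b_3,\sigma_2$ (first-moment) pieces cancel, after a second tilde-swap, against the $\p_\mu H$ term; since $V^\beta_0=0$ and $Y^\a_T=\p_xg+\tilde{\sE}[\p_\mu g]$, what remains expresses $\sE[\la Y^\a_T,V^\beta_T\ra]$ through $\p_xf$, the $\p_\mu f$-copy term, and a term pairing $\beta-\a$ with $b_2^\ast Y^\a$ and the control block of $b_3$ acting on $\sE[Y^\a]$. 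Substituting back, the $V^\beta$-contributions cancel and, recognizing $\p_af+b_2^\ast Y^\a=\p_aH^{\textrm{re}}$ and the reassembled measure term as $\tilde{\sE}[\p_\nu H^{\textrm{re}}]$, I obtain, for every $\beta\in\cA$,
\[
\sE\int_0^T\big\la\p_aH^{\textrm{re}}(\theta^\a_t,Y^\a_t)+\tilde{\sE}\big[\p_\nu H^{\textrm{re}}(\tilde\theta^\a_t,\tilde Y^\a_t)(X^\a_t,\a_t)\big],\,\beta_t-\a_t\big\ra\,\d t\ \ge\ 0 .
\]
Testing with $\beta_t=\a_t+\bm{1}_A(t)(a-\a_t)$ over progressively measurable $A$ and $a$ in a countable dense subset of $\bA$, then using that the bracket above is independent of $a$ (and the derivatives are continuous), upgrades this to the pointwise condition \eqref{eq:opti_re}.

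\emph{Sufficiency.} Conversely, suppose $\a$ satisfies \eqref{eq:opti_re} and let $\beta\in\cA$ be arbitrary. I write $J(\beta;\xi_0)-J(\a;\xi_0)$ as a sum of running- and terminal-cost differences, bound it from below using convexity of $f$ in $(x,a,\eta)$ and of $g$ in $(x,\mu)$ (H.\ref{assum:mfcE}(\ref{item:mfcE_convex})), apply the tilde-swap, and then run It\^o's formula on $\la Y^\a_t,X^\beta_t-X^\a_t\ra$ --- noting that $X^\beta-X^\a$ again solves a linear SDE because $(b,\sigma)$ are affine --- with the same state and first-moment cancellations as before. Everything collapses, exactly as in the necessity step, to
\[
J(\beta;\xi_0)-J(\a;\xi_0)\ \ge\ \sE\int_0^T\big\la\p_aH^{\textrm{re}}(\theta^\a_t,Y^\a_t)+\tilde{\sE}\big[\p_\nu H^{\textrm{re}}(\tilde\theta^\a_t,\tilde Y^\a_t)(X^\a_t,\a_t)\big],\,\beta_t-\a_t\big\ra\,\d t\ \ge\ 0 ,
\]
the last inequality being \eqref{eq:opti_re} applied pointwise with $a=\beta_t$. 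Hence $J(\beta;\xi_0)\ge J(\a;\xi_0)$, so $\a$ is optimal.

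\emph{Main obstacle.} I expect the crux to be twofold: (i) rigorously differentiating the measure-dependent costs under the expectation, combining the L-derivative chain rule with the uniform integrability supplied by the growth/Lipschitz bounds; and (ii) tracking the cancellation of the mean-field terms $b_3\bar\eta$ and $\sigma_2\ol{\pi_1\sharp\eta}$ between the adjoint drift and the It\^o cross-term, since their $\p_\mu/\p_\nu$ contributions only line up after the correct tilde-swap --- this is precisely where the ``extended'' structure (control in the law) enters, in contrast to the classical MFC case. The integrated-to-pointwise upgrade is routine but relies on closedness and convexity of $\bA$ to keep the test controls admissible.
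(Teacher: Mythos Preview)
Your proposal is correct and amounts to a self-contained proof of the Pontryagin maximum principle for extended MFC, specialized to the linear-convex setting of (H.\ref{assum:mfcE}); the argument you outline is essentially the proof of \cite[Theorem~3.5]{acciaio2019}. The paper, however, takes a much shorter route: it simply invokes that theorem to obtain the optimality condition \eqref{eq:opti} in terms of the full Hamiltonian $H$, and then observes that since $\sigma$ is uncontrolled (H.\ref{assum:mfcE}(\ref{item:mfcE_lin})), one has $\p_a H=\p_a H^{\textrm{re}}$ and $\p_\nu H=\p_\nu H^{\textrm{re}}$, so \eqref{eq:opti} and \eqref{eq:opti_re} are equivalent. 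Your approach buys independence from the external reference and makes the cancellations explicit (which is instructive, and your ``main obstacle'' paragraph correctly identifies where the delicate bookkeeping lies); the paper's approach is a two-line reduction that is appropriate here since the extended-MFC maximum principle is already available in the cited literature.
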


To prove Lemma \ref{lemma:maximum_principle},
recall that by 
 the    maximum principle \cite[Theorem 3.5]{acciaio2019},
 $\alpha$ is optimal if and only if
\begin{align}\label{eq:opti}
\la \p_a H(\theta^\a_t,Y^\a_t,Z^\a_t) + \tilde{\mathbb{E}} [\p_\nu  H(\tilde{\theta}^\a_t,\tilde{Y}^\a_t,\tilde{Z}^\a_t)(X^\a_t,\alpha_t)], \alpha_t -a \ra\leq 0, \quad 
\textnormal{$\forall a \in \bA, \ \mathrm{d}\mathbb{P} \otimes \mathrm{d}t$\ -a.e.}
\end{align}
The equivalence between  
\eqref{eq:opti} and \eqref{eq:opti_re}
follows   from 
 the fact that $\sigma$ is uncontrolled. 
  
\subsubsection{Feedback representation of the optimality condition}
 
To reduce the non-Markovian system in Lemma  \ref{lemma:maximum_principle} into  a Markovian system, we assume that the optimality condition    \eqref{eq:opti_re} can be attained through a sufficiently regular feedback map from the state and adjoint processes to the action set.

\begin{Assumption}\phantomsection\l{assum:mfcE_hat}
  
Assume the notation of (H.\ref{assum:mfcE}). 
There exists a  measurable function $\hat{\a}:[0,T]\t \sR^n \t \sR^n \t \cP_2(\sR^{n} \t \sR^{n}) \to \bA$ 
and a constant $L_\a\in [0,\infty)$ satisfying the following properties:
\begin{enumerate}[(1)]
\item \l{item:ex} 
For all $t\in [0,T]$,
$|\hat{\a}(\cdot,0,0,\bm{\delta}_{0_{n + n}})| \leq L_\a$,
  $\hat{\a}(t,\cdot):\sR^n \t \sR^n \t \cP_2(\sR^{n} \t \sR^{n}) \to \bA$ is 
$L_\a$-Lipschitz continuous,
and satisfies  for all $(x,y,\eta,a)\in \sR^n \t \sR^n \t \cP_2(\sR^{n} \t \sR^{n})\t \bA$, 
\begin{align}\l{eq:opti_markov}
\begin{split}
&\la \p_a H^{\textrm{re}}(t,x,
\hat{\a}(t,x,y,\eta),
\phi(t,\eta),y)
\\
&\q
 +\int_{\sR^n\t \sR^n} \p_\nu  H^{\textrm{re}}(t,\tilde{x},\hat{\a}(t,\tilde{x},\tilde{y},\eta) ,\phi(t,\eta), \tilde{y})\big(x,\hat{\a}(t,x,y,\eta) \big)\,\d \eta(\tilde{x},\tilde{y}),
\\
&\q \hat{\a}(t,x,y,\eta) -a \ra\leq 0, 
\end{split}
\end{align}
where
$\phi(t,\eta) \coloneqq \eta \circ \big(\sR^n\t \sR^n\ni (x,y)\mapsto(x,\hat{\a}(t,x,y,\eta))\in \sR^n\t \bA\big)^{-1}$.

\item \l{item:HC} 
For all $t,t'\in [0,T]$ and $ (x,y,\eta)\in \sR^n \t \sR^n \t \cP_2(\sR^n \times \sR^{n})$,
$|\hat{\a}(t,x,y,\eta)-\hat{\a}(t',x,y,\eta)|\le L_{\a}(1+|x|+|y|+\|\eta\|_2)|t-t'|^{1/2}$.
\end{enumerate}
 
\end{Assumption}

\begin{Remark}
Equation \eqref{eq:opti_markov} implies that the    function $\hat{\a}$ attains 
the optimality condition  \eqref{eq:opti}  pointwise.
In this case,
  the  optimal control $\hat{\a}$ can be expressed as $\hat{\a}_t=\hat{\a}(t,X^{\hat{\a}}_t,Y^{\hat{\a}}_t,\sP_{(X^{\hat{\a}}_t,Y^{\hat{\a}}_t)})$, $t\in [0,T]$,
  and 
 the joint law of $(X^{\hat{\a}}_t,\hat{\a}_t)$ is given by $\phi(t,\sP_{(X^{\hat{\a}}_t,Y^{\hat{\a}}_t)})$. To see it,  observe that  
 for any $t\in [0,T]$, $X,Y \in L^2(\Om;\sR^n)$
and any Borel   set $A \subset \sR^n \t \sR^k$, 
\begin{align}\l{eq:PushF}
\begin{split}
\phi(t,\sP_{(X,Y)})(A) &= \sP_{(X,Y)} \left( (\text{id}_{\sR^{n}},\hat{\a}(t,\cdot,\cdot,\sP_{(X,Y)}))^{-1}(A) \right) \\
& = \sP \left( (X,Y) \in (\text{id}_{\sR^{n}},\hat{\a}(t,\cdot,\cdot,\sP_{(X,Y)}))^{-1}(A) \right) \\
& = \sP \left( (X,\hat{\a}(t,X,Y,\sP_{(X,Y)})) \in  A \right) = \mathbb{P}_{(X,\hat{\a}(t,X,Y,\sP_{(X,Y)}))}(A).
\end{split}
\end{align}

Constructing a function $\hat\alpha$   satisfying (H.\ref{assum:mfcE_hat})  for general coefficients appears challenging. The main difficulty stems from the fact that
due to
the nonlinear dependence on the law of the control, 
the function 
$\phi$ in \eqref{eq:opti_markov} 
  depends implicitly on $\hat \alpha $.  
 Hence  Condition \eqref{eq:opti_markov} cannot be simplified to a pointwise minimization of the Hamiltonian  $H^{\rm re}$.  In fact,  although
similar assumptions have been  adopted in  
\cite[Theorem 2.4]{gobet2019} and \cite[Assumption (M)]{lauriere2020}  
to study  extended MFC problems, 
 no explicit coefficient conditions were provided therein  to guarantee the existence of $\hat \alpha$.
  
\end{Remark}

Here we verify (H.\ref{assum:mfcE_hat}) in special cases, with  additional conditions on the running cost $f$.
These conditions only involve the first-order derivatives of $f$.
 The proofs are given in Section \ref{sec:proof_feedback_representation}.

The first example considers the case where the running cost  is independent of   the law of the controls, although the drift coefficient may depend on the expectation of the control.
This includes the classical MFC problem
as a special case,
for which both the   state  dynamics is also independent of
 the law of the controls
(see  e.g., \cite{carmona2018a}).

\begin{Proposition}\l{prop:mfc_hat_a}
Suppose (H.\ref{assum:mfcE}) holds,
and 
for all $(t,x,a)\in [0,T]\t\sR^n\t \sR^k$,
  $\cP_2(\sR^n\t \sR^k)\ni \eta\mapsto 
f(t,x,a,\eta)\in \sR$
 depends only on  
 the first  marginal $\pi_1\sharp \eta$ of   $\eta$.
Then 
 (H.\ref{assum:mfcE_hat}(\ref{item:ex})) holds.

Assume further that
there exists  $\tilde{K}\ge 0$ such that 
 for all $t,t'\in [0,T]$, $(x,a,\eta)\in \sR^n\t  \bA\t \cP_2(\sR^n\t \sR^n)$,  
$|b_2(t)-b_2(t')|+|b_3(t)-b_3(t')|\le \tilde{K}|t-t'|^{1/2}$ and $|\p_a f(t,x,a,\eta)-\p_a f(t',x,a,\eta)|\le \tilde{K}(1+|x|+|a|+\|\eta\|_2)|t-t'|^{1/2}$.
Then  
 (H.\ref{assum:mfcE_hat}) holds.

\end{Proposition}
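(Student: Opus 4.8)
The plan is to construct $\hat\a$ \emph{explicitly} as the minimiser of a modified Hamiltonian. This is possible precisely because, when $f$ does not depend on the law of the control, the self-referential integral term in \eqref{eq:opti_markov}---which in the general extended-MFC case couples $\hat\a$ to itself through $\phi$---degenerates into a quantity not involving $\hat\a$. Concretely, since $f(t,x,a,\cdot)$ depends only on the first marginal, one may use the version $\p_\nu f\equiv 0$, and then $\p_a f(t,x,a,\eta)$ and $\p_\mu f(t,x,a,\eta)$ depend on $\eta$ only through $\pi_1\sharp\eta$ (for which we write, with a slight abuse, $\p_a f(t,x,a,\mu)$ etc.). Writing $b_3(t)=(b_3^{x}(t)\ \,b_3^{a}(t))$ for the block decomposition of $b_3$ into its $\sR^{n\t n}$-part, acting on the state mean, and its $\sR^{n\t k}$-part, acting on the control mean, the affine form \eqref{eq:b_affine} gives, for any $\zeta\in\cP_2(\sR^n\t\sR^k)$,
\[
\p_a H^{\textrm{re}}(t,x,a,\zeta,y)=b_2(t)^{*}y+\p_a f(t,x,a,\pi_1\sharp\zeta),
\qquad
\p_\nu H^{\textrm{re}}(t,\tilde x,a',\zeta,\tilde y)(\cdot)\equiv b_3^{a}(t)^{*}\tilde y,
\]
the latter being constant in the point at which the $\nu$-derivative is evaluated. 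Since moreover $\pi_1\sharp\phi(t,\eta)=\pi_1\sharp\eta=:\mu$, the integral term in \eqref{eq:opti_markov} collapses to $b_3^{a}(t)^{*}\bar y_\eta$, where $\bar y_\eta:=\int y\,\d\eta(x,y)$, and \eqref{eq:opti_markov} reduces to the finite-dimensional variational inequality
\[
\big\langle\, b_2(t)^{*}y+b_3^{a}(t)^{*}\bar y_\eta+\p_a f\big(t,x,\hat\a(t,x,y,\eta),\mu\big),\ \hat\a(t,x,y,\eta)-a\,\big\rangle\le 0
\qquad\forall\,a\in\bA,
\]
which is exactly the first-order condition for minimising over the closed convex set $\bA$ the map $a\mapsto G(t,x,a,y,\eta):=\la b_2(t)a,y\ra+\la b_3^{a}(t)a,\bar y_\eta\ra+f(t,x,a,\mu)$. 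The point is that $G$ no longer contains $\hat\a$, so this characterises $\hat\a$ without any fixed-point argument.

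It remains to verify that $\hat\a(t,x,y,\eta):=\argmin_{a\in\bA}G(t,x,a,y,\eta)$ is well defined and has the claimed regularity. First I would show strong convexity: choosing in (H.\ref{assum:mfcE}(\ref{item:mfcE_convex})) the test data $x'=x$, $\tilde X'=\tilde X$, $\tilde\a'=\tilde\a$ and using $\p_\nu f\equiv 0$ forces $\lambda_2=0$, hence $\lambda_1>0$, and shows that $f(t,x,\cdot,\mu)$---and therefore $G(t,x,\cdot,y,\eta)$---is $\lambda_1$-strongly convex in $a$; since $\bA$ is closed and convex this gives a unique minimiser, and measurability of $t\mapsto\hat\a(t,\cdot)$ follows by a standard measurable-selection argument (joint measurability then from the Lipschitz estimates below). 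For the bound $|\hat\a(t,0,0,\bm{\delta}_{0_{n+n}})|\le L_\a$ I would use the elementary fact that the minimiser $a^{*}$ over $\bA$ of a $\lambda_1$-strongly convex $\psi$ satisfies $|a^{*}-a_0|\le\lambda_1^{-1}|\nabla\psi(a_0)|$ for any fixed $a_0\in\bA$, applied with $a_0=\Pi_\bA(0)$ and $\psi=f(t,0,\cdot,\bm{\delta}_{0_n})$, together with the linear growth of $\p_a f$ in (H.\ref{assum:mfcE}(\ref{item:mfcE_growth})). For the $L_\a$-Lipschitz continuity of $\hat\a(t,\cdot)$ in $(x,y,\eta)$ I would use the monotonicity argument for argminima: subtracting the variational inequalities at two parameter values $\theta_1=(x_1,y_1,\eta_1)$, $\theta_2=(x_2,y_2,\eta_2)$ and using strong monotonicity of $a\mapsto\p_a G$ yields $|\hat\a(t,\theta_1)-\hat\a(t,\theta_2)|\le\lambda_1^{-1}|\p_a G(t,x_1,a,y_1,\eta_1)-\p_a G(t,x_2,a,y_2,\eta_2)|$ at $a=\hat\a(t,\theta_2)$, and the right-hand side is controlled via $\|b_2\|_\infty$, $\|b_3\|_\infty$, the $\tilde L$-Lipschitz continuity of $\p_a f(t,\cdot)$, and the elementary bounds $\cW_2(\pi_1\sharp\eta_1,\pi_1\sharp\eta_2)\le\cW_2(\eta_1,\eta_2)$ and $|\bar y_{\eta_1}-\bar y_{\eta_2}|\le\cW_2(\eta_1,\eta_2)$. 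This establishes (H.\ref{assum:mfcE_hat}(\ref{item:ex})).

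Finally, under the additional $\tfrac12$-Hölder-in-time bounds on $b_2,b_3,\p_a f$, I would run the same monotonicity argument with $(x,y,\eta)$ fixed and $t$ varying, obtaining $|\hat\a(t,x,y,\eta)-\hat\a(t',x,y,\eta)|\le\lambda_1^{-1}|\p_a G(t,x,a^{*},y,\eta)-\p_a G(t',x,a^{*},y,\eta)|$ with $a^{*}=\hat\a(t',x,y,\eta)$, and then bound the right-hand side by $C(1+|x|+|y|+\|\eta\|_2+|a^{*}|)|t-t'|^{1/2}$, into which one substitutes the linear growth bound $|a^{*}|\le L_\a(1+|x|+|y|+\|\eta\|_2)$ coming from (H.\ref{assum:mfcE_hat}(\ref{item:ex})) (using $\cW_2(\eta,\bm{\delta}_{0_{n+n}})=\|\eta\|_2$). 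Taking $L_\a$ to be the largest of the constants produced gives (H.\ref{assum:mfcE_hat}(\ref{item:HC})). The only genuinely delicate step is the reduction above: it requires careful bookkeeping of the partial L-derivatives with respect to the marginals of a joint law, and crucially the observation that---unlike in the general setting of (H.\ref{assum:mfcE_hat})---the integral term of \eqref{eq:opti_markov} here becomes a functional of $\eta$ alone, so that $\hat\a$ is the solution of a genuine, non-implicit, parametric strongly convex program; the rest is routine sensitivity analysis.
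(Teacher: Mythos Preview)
Your proposal is correct and matches the paper's approach: both reduce \eqref{eq:opti_markov} to an explicit parametric argmin by exploiting that, when $f$ depends only on the first marginal, $\p_\nu H^{\textrm{re}}(t,\tilde x,a,\zeta,\tilde y)(\cdot)\equiv b_3^{a}(t)^{*}\tilde y$ is independent of the evaluation point (so the integral term collapses to $b_3^{a}(t)^{*}\bar y_\eta$), and then use $\lambda_1$-strong convexity together with the standard monotonicity argument for minimisers of strongly convex functions to obtain the Lipschitz and H\"older estimates. One small slip: to force $\lambda_2=0$ in (H.\ref{assum:mfcE}(\ref{item:mfcE_convex})) you should fix $x'=x$, $a'=a$, $\tilde X'=\tilde X$ and \emph{vary} $\tilde\a'$ (not set $\tilde\a'=\tilde\a$), but the conclusion $\lambda_1>0$ and the rest of the argument are unaffected.
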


The second example considers the case where   the dependence of   $f$ 
on $(X,\a)$ is separable, and 
the state dynamics \eqref{eq:mfcE_fwd}
depends on the  control process 
only through its expectation.
In this case, we show that the function $\hat \alpha$ in 
(H.\ref{assum:mfcE_hat})  can be chosen to depend solely on the law of the state and adjoint processes,
resulting a \emph{deterministic}  optimal control.

 \begin{Proposition}\l{prop:control_measure}
Suppose  (H.\ref{assum:mfcE}) holds,
\eqref{eq:b_affine} holds with $b_2\equiv 0$,
and
the function $f$ 
 is of the form 
$$f(t,x,a,\eta)=f_1(t,x,\pi_1\sharp \eta,\pi_2\sharp \eta)
+f_2(t,a,\pi_1\sharp \eta,\pi_2\sharp \eta),
$$
where 
 $f_1:[0,T]\t \sR^n\t \cP_2(\sR^n)\t \cP_2(\sR^k)\to \sR$
and $f_2:[0,T]\t \sR^k\t \cP_2(\sR^n)\t \cP_2(\sR^k)\to \sR$
satisfy (H.\ref{assum:mfcE}(\ref{item:mfcE_growth})),
and $\pi_1\sharp\eta$ (resp.~$\pi_2\sharp \eta$) is 
 the first (resp.~second) marginal of   $\eta$.
Then (H.\ref{assum:mfcE_hat}(\ref{item:ex})) holds with  a function $\hat{\a}:[0,T]\t \cP_2(\sR^n\t \sR^n)\to\bA$. 

Assume further that
there exists   $\tilde{K}\ge 0 $ such that 
 for all $t,t'\in [0,T]$ and  $(x,a,\mu)\in \sR^n\t  \bA\t \cP_2(\sR^n)$,  
$|b_3(t)-b_3(t')|\le \tilde{K}|t-t'|^{1/2}$ 
and 
\begin{align*}
&
|\p_\nu f_1(t,x,\mu,\bm{\delta}_{{a}})({a})
-\p_\nu f_1(t',x,\mu,\bm{\delta}_{{a}})({a})|
+
|\p_a f_2(t,a,\mu,\bm{\delta}_{{a}})
-\p_a f_2(t',a,\mu,\bm{\delta}_{{a}})|
\\
&\q+|\p_\nu f_2(t,a,\mu,\bm{\delta}_{{a}})({a})
-
\p_\nu f_2(t',a,\mu,\bm{\delta}_{{a}})({a})|
\le \tilde{K}(1+ |x| + |a| + \|\mu\|_2)|t-t'|^{1/2}.
\end{align*}
Then (H.\ref{assum:mfcE_hat}) holds with  a function $\hat{\a}:[0,T]\t \cP_2(\sR^n\t \sR^n)\to\bA$.
\end{Proposition}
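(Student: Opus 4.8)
The plan is to construct the feedback map $\hat\a$ as the (deterministic) minimiser of a \emph{modified Hamiltonian}, exploiting that, since $b_2\equiv0$, the drift depends on the control only through $\mathbb E[\a_t]$, and that $f$ is separable in the pair (state, control). For $(t,\eta)\in[0,T]\t\cP_2(\sR^n\t\sR^n)$ let $\mu\coloneqq\pi_1\sharp\eta$, $\bar y_\eta\coloneqq\int y\,\d\eta(x,y)$, and write $b_3(t)=(b_3^{X}(t),b_3^{\a}(t))$ in state/control blocks. Define
\[
\Phi(t,\eta,a)\coloneqq\la b_3^{\a}(t)a,\bar y_\eta\ra+\int_{\sR^n}f_1(t,x,\mu,\bm{\delta}_a)\,\d\mu(x)+f_2(t,a,\mu,\bm{\delta}_a),\qquad a\in\bA,
\]
and set $\hat\a(t,\eta)\coloneqq\argmin_{a\in\bA}\Phi(t,\eta,a)$; being independent of $(x,y)$, this has the form claimed in the statement, and we regard it as $\hat\a(t,x,y,\eta)\equiv\hat\a(t,\eta)$. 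By (H.\ref{assum:mfcE}(\ref{item:mfcE_growth})) and the chain rule for L-derivatives through $a\mapsto\bm{\delta}_a$, $a\mapsto\Phi(t,\eta,a)$ is $C^1$ with
\[
\nabla_a\Phi(t,\eta,a)=(b_3^{\a}(t))^{*}\bar y_\eta+\p_af_2(t,a,\mu,\bm{\delta}_a)+\p_\nu f_2(t,a,\mu,\bm{\delta}_a)(a)+\int_{\sR^n}\p_\nu f_1(t,x,\mu,\bm{\delta}_a)(a)\,\d\mu(x),
\]
which is of linear growth in $(|a|,\|\mu\|_2,|\bar y_\eta|)$, uniformly in $t$.

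\emph{Strong convexity and the variational inequality.} Applying the convexity inequality of (H.\ref{assum:mfcE}(\ref{item:mfcE_convex})) with an arbitrary pointwise state argument, the laws $\eta=\mu\otimes\bm{\delta}_a$ and $\eta'=\mu\otimes\bm{\delta}_{a'}$, and the coupling that keeps the $\sR^n$-component common (so that the $\p_\mu$-contribution vanishes and $\tilde{\sE}[|\tilde\a'-\tilde\a|^2]=|a'-a|^2$), then averaging the inequality over the state argument $\sim\mu$, shows that $a\mapsto\int_{\sR^n}f(t,x,a,\mu\otimes\bm{\delta}_a)\,\d\mu(x)$ is $(\lambda_1+\lambda_2)$-strongly convex with gradient equal to the $f$-part of $\nabla_a\Phi$. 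Since $\la b_3^{\a}(t)a,\bar y_\eta\ra$ is linear in $a$, $\Phi(t,\eta,\cdot)$ is $(\lambda_1+\lambda_2)$-strongly convex on the closed convex set $\bA$; hence $\hat\a(t,\eta)$ exists, is unique (and jointly measurable in $(t,\eta)$), and satisfies $\la\nabla_a\Phi(t,\eta,\hat\a(t,\eta)),\hat\a(t,\eta)-a\ra\le0$ for all $a\in\bA$.

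\emph{Identification with \eqref{eq:opti_markov}, and (H.\ref{assum:mfcE_hat}(\ref{item:ex})).} Substituting $\hat\a(t,x,y,\eta)\equiv\hat\a(t,\eta)=:a^{*}$ into \eqref{eq:opti_markov}, the induced law is $\phi(t,\eta)=\mu\otimes\bm{\delta}_{a^{*}}$; since $b_2\equiv0$ we have $\p_aH^{\textrm{re}}=\p_af=\p_af_2$, and by separability $\p_\nu H^{\textrm{re}}(t,\tilde x,a^{*},\mu\otimes\bm{\delta}_{a^{*}},\tilde y)(x,a^{*})=(b_3^{\a}(t))^{*}\tilde y+\p_\nu f_1(t,\tilde x,\mu,\bm{\delta}_{a^{*}})(a^{*})+\p_\nu f_2(t,a^{*},\mu,\bm{\delta}_{a^{*}})(a^{*})$. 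Integrating the latter against $\eta(\d\tilde x,\d\tilde y)$ and adding the $\p_aH^{\textrm{re}}$-term reproduces exactly $\nabla_a\Phi(t,\eta,a^{*})$, so the variational inequality above is precisely \eqref{eq:opti_markov}. The bound $|\hat\a(t,\bm{\delta}_{0_{2n}})|\le L_\a$ follows by comparing $\hat\a(t,\bm{\delta}_{0_{2n}})$ with any fixed $a_0\in\bA$ and using strong monotonicity of $\nabla_a\Phi(t,\bm{\delta}_{0_{2n}},\cdot)$, which gives $(\lambda_1+\lambda_2)|\hat\a(t,\bm{\delta}_{0_{2n}})-a_0|\le|\nabla_a\Phi(t,\bm{\delta}_{0_{2n}},a_0)|$, bounded uniformly in $t$ by the $L^\infty$ and linear-growth hypotheses. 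Together with the Lipschitz estimate below, this establishes the first assertion, (H.\ref{assum:mfcE_hat}(\ref{item:ex})).

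\emph{Stability estimates and the main obstacle.} The Lipschitz (in $\eta$) and Hölder (in $t$) bounds are the usual perturbation estimates for minimisers of strongly convex functions. For $\eta_1,\eta_2$, writing $a_i=\hat\a(t,\eta_i)$, adding the two variational inequalities and using $(\lambda_1+\lambda_2)$-strong monotonicity gives $(\lambda_1+\lambda_2)|a_1-a_2|^2\le\la\nabla_a\Phi(t,\eta_2,a_2)-\nabla_a\Phi(t,\eta_1,a_2),a_1-a_2\ra$, whose first factor is $\le C\cW_2(\eta_1,\eta_2)$ by $|\bar y_{\eta_1}-\bar y_{\eta_2}|\le\cW_2(\eta_1,\eta_2)$, the Lipschitz dependence of $\p_af_2$, $\p_\nu f_1$, $\p_\nu f_2$ on the measure argument (from (H.\ref{assum:mfcE}(\ref{item:mfcE_growth}))), and the Lipschitz dependence of $x\mapsto\p_\nu f_1(t,x,\mu,\bm{\delta}_a)(a)$ in $x$ (to pass $\cW_2(\mu_1,\mu_2)$ through $\int_{\sR^n}\cdot\,\d\mu$). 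The same comparison between $t$ and $t'$, combined with the assumed $\tfrac12$-Hölder-in-time hypotheses on $b_3$ and on $\p_\nu f_1(\cdot)(a)$, $\p_af_2(\cdot)$, $\p_\nu f_2(\cdot)(a)$ (integrating the $f_1$-term against $\mu$ and using $\int|x|\,\d\mu\le\|\mu\|_2$), yields $|\hat\a(t,\eta)-\hat\a(t',\eta)|\le C(1+|\hat\a(t',\eta)|+\|\eta\|_2)|t-t'|^{1/2}\le C(1+\|\eta\|_2)|t-t'|^{1/2}$, i.e.\ (H.\ref{assum:mfcE_hat}(\ref{item:HC})); taking $L_\a$ large enough absorbs all constants. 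I expect the main obstacle to be the identification step above: verifying that the first-order condition of the finite-dimensional problem $\min_{a\in\bA}\Phi(t,\eta,a)$ coincides with the self-referential condition \eqref{eq:opti_markov}, whose datum $\phi(t,\eta)$ itself depends on $\hat\a$. This requires carefully propagating the partial L-derivative $\p_\nu H^{\textrm{re}}$ through $a\mapsto\bm{\delta}_a$, and it is precisely here that the hypotheses $b_2\equiv0$ and the $(x,a)$-separability of $f$ are indispensable: without them the induced control law would not reduce to $\mu\otimes\bm{\delta}_{\hat\a}$, and the minimiser would fail to be a deterministic constant. A secondary subtlety is extracting a strictly positive convexity constant from $\lambda_1+\lambda_2>0$ even when $\lambda_1=0$ (cost strongly convex only in the law of the control), which dictates the Dirac-coupling choice used above.
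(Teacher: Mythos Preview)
Your proposal is correct and follows essentially the same route as the paper. The paper defines $h(t,\chi,a)\coloneqq\tilde{\sE}[H^{\textrm{re}}(t,\tilde X,a,\tilde\sP_{(\tilde X,a)},\tilde Y)]$ and sets $\hat\a(t,\chi)=\argmin_{a\in\bA}h(t,\chi,a)$; your $\Phi$ is exactly the $a$-dependent part of this $h$ written out explicitly, so the minimisers coincide, and both proofs verify \eqref{eq:opti_markov} by checking that $\nabla_a\Phi$ (equivalently $\p_a h$) reproduces the left-hand side of \eqref{eq:opti_markov} once $\phi(t,\eta)=\mu\otimes\bm\delta_{a^*}$. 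The stability/regularity arguments (strong monotonicity of the gradient to control $|\hat\a(t,\eta)-\hat\a(t',\eta')|$) are the same in both. The only notable difference is that you spell out how $(\lambda_1+\lambda_2)$-strong convexity is extracted via the Dirac coupling $\eta=\mu\otimes\bm\delta_a$, $\eta'=\mu\otimes\bm\delta_{a'}$, whereas the paper simply asserts it; this is a welcome clarification but not a different method.
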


The last example considers the setting where the running cost $f$ is 
   quadratic in the control variable, but it  can have a generic convex dependence on  the state.
 It includes   the commonly studied linear--quadratic MFC problems as   special cases.
To simplify the notation,  
 we consider a one-dimensional setting with  $n=k=d=1$,  $\bA=\sR$ 
 and 
\begin{align}\l{eq:f_quadratic}
&f(t,x,a,\eta) = \frac{1}{2} \left( f_1(t,x,\pi_1\sharp \eta) +q(t)a^2+
\bar{q}(t)\big(a-r(t)\bar{a}\big)^2
+2 c(t)xa \right),
\end{align}
where
 $\bar{a} = \int a \mathrm{d} \eta(x,a)$, 
$ q,\bar{q}, r, c\in L^\infty(0,T;\sR)$,
 $q\ge \lambda_1>0$, $\bar{q}\geq 0$
and $f_1:[0,T]\t \sR\t \cP_2(\sR)\to \sR$ is a given  function. 
The analysis extends naturally to  a
multi-dimensional setting.

\begin{Proposition}
\label{prop:lq}
  Suppose  (H.\ref{assum:mfcE}) holds, $n=k=d=1$,  $\bA=\sR$,
  $b_3(t)=(\beta(t),\gamma(t))^\top$
  in \eqref{eq:b_affine},
  and   $f$ is of the form \eqref{eq:f_quadratic}.
  Then 
(H.\ref{assum:mfcE_hat}(\ref{item:ex})) holds
with the function $\hat \alpha$ given by
\begin{align}
\label{eq:hat_alpha_lq}
\hat{\a}(t,x,y,\eta) \coloneqq  \frac{- c(t)x-b_2(t)y   + \psi(t) \int_\sR x\,\d\eta(x,y) +( -\gamma(t)  + \zeta(t)) \int_\sR y\,\d\eta(x,y)  }
{q(t) + \bar{q}(t)},
\end{align}
where
$\psi$ and $\zeta$ are defined by  
\begin{align*}
& \psi(t) \coloneqq \frac{c(t) \bar{q}(t) r(t)(r(t) -2)}{q(t) + \bar{q}(t)\big(r(t)-1\big)^2},
\q
 \zeta(t)\coloneqq \frac{(b_2(t) + \gamma(t))\bar{q}(t) r(t)(r(t) -2)}{q(t) + \bar{q}(t)\big(r(t)-1\big)^2}.
\end{align*}
If  $b_2,\gamma,q,\bar{q},r,c$ are $1/2$-H\"{o}lder continuous,
then  $\hat \alpha$  in \eqref{eq:hat_alpha_lq} satisfies 
(H.\ref{assum:mfcE_hat}).

\end{Proposition}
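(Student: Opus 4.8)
The plan is to verify Proposition \ref{prop:lq} in two stages: first establish the feedback formula \eqref{eq:hat_alpha_lq} together with (H.\ref{assum:mfcE_hat}(\ref{item:ex})) by solving the first-order condition explicitly, and then, under the additional $1/2$-H\"older assumptions on the coefficients, derive the time regularity in (H.\ref{assum:mfcE_hat}(\ref{item:HC})). For the first stage, I would compute $\p_a H^{\textrm{re}}$ and the $\p_\nu H^{\textrm{re}}$ term for the quadratic cost \eqref{eq:f_quadratic}. Since $\bA = \sR$, the optimality condition \eqref{eq:opti_markov} is the interior first-order condition, i.e. the bracketed expression vanishes. Concretely, $\p_a H^{\textrm{re}}(t,x,a,\eta,y) = b_2(t)y + q(t)a + \bar q(t)(a - r(t)\bar a) + c(t)x$, and the L-derivative-in-$\nu$ term, evaluated at $\phi(t,\eta)$ and integrated, contributes a term proportional to $-\bar q(t)r(t)$ times the mean of $\hat\a(t,\cdot,\cdot,\eta)$ over $\eta$ (from differentiating $(a - r\bar a)^2$ through its $\bar a$-dependence), plus the contribution of $\beta, \gamma$ from the drift's dependence on $\bar\eta$. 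Writing $m(\eta) \coloneqq \int_\sR \hat\a(t,x,y,\eta)\,\d\eta(x,y)$, the first-order condition becomes an affine equation in $\hat\a(t,x,y,\eta)$ whose inhomogeneous part depends on $x$, $y$, and $m(\eta)$; taking $\eta$-averages of both sides yields a scalar linear equation for $m(\eta)$ in terms of $\int x\,\d\eta$ and $\int y\,\d\eta$, which is solvable precisely because the coefficient $q(t) + \bar q(t)(r(t)-1)^2 \ge \lambda_1 > 0$. Substituting the solution for $m(\eta)$ back gives the closed form \eqref{eq:hat_alpha_lq}, and one reads off $\psi, \zeta$ by matching. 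The Lipschitz bound and the bound $|\hat\a(t,0,0,\bm\delta_{0})| \le L_\a$ then follow from the $L^\infty$ bounds on all the coefficients and the uniform lower bound $q + \bar q \ge \lambda_1$, noting $|\int x\,\d\eta| \le \|\eta\|_2$ etc.; linearity in $(x,y,\int x\,\d\eta, \int y\,\d\eta)$ makes Lipschitz continuity in $(x,y,\eta)$ (with the Wasserstein distance controlling $|\int x\,\d\eta - \int x\,\d\eta'|$) immediate.

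For the second stage, I would note that $\hat\a$ in \eqref{eq:hat_alpha_lq} is an explicit rational expression in the coefficients $b_2, \gamma, q, \bar q, r, c$ (and hence in $\psi, \zeta$, which are themselves rational in these with denominator $q + \bar q(r-1)^2 \ge \lambda_1$), with coefficients multiplying the fixed quantities $x$, $y$, $\int x\,\d\eta$, $\int y\,\d\eta$. Since the denominators $q(t) + \bar q(t)$ and $q(t) + \bar q(t)(r(t)-1)^2$ are bounded below by $\lambda_1 > 0$ and bounded above (all coefficients are in $L^\infty$), and since $1/2$-H\"older functions that are bounded and bounded away from zero have $1/2$-H\"older reciprocals, and products/sums of bounded $1/2$-H\"older functions are $1/2$-H\"older, it follows that each coefficient in the affine expression \eqref{eq:hat_alpha_lq} is $1/2$-H\"older in $t$. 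Therefore $|\hat\a(t,x,y,\eta) - \hat\a(t',x,y,\eta)| \le C(1 + |x| + |y| + |\int x\,\d\eta| + |\int y\,\d\eta|)|t-t'|^{1/2} \le L_\a(1 + |x| + |y| + \|\eta\|_2)|t-t'|^{1/2}$, which is exactly (H.\ref{assum:mfcE_hat}(\ref{item:HC})).

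The only genuinely delicate point is the bookkeeping in the first-order condition: correctly identifying the contribution of the $\p_\nu H^{\textrm{re}}$ term. One must remember that $H^{\textrm{re}}$ is evaluated at the pushforward measure $\phi(t,\eta)$ (the joint law of state and the feedback control), so the $\nu$-derivative sees the control-law dependence of $f$ through the term $\bar q(t)(a - r(t)\bar a)^2$ in \eqref{eq:f_quadratic} and through the drift term $b_3(t)\bar\eta = (\beta(t), \gamma(t))^\top\bar\eta$, where $\bar\eta$ includes the mean of the control. Differentiating $\tfrac12\bar q(a - r\bar a)^2$ with respect to the $a$-component of the measure argument and evaluating along the diagonal produces the factor $-\bar q r (a - r\bar a)$, which when integrated against $\phi(t,\eta)$ gives $-\bar q r (m - r m) = -\bar q r (1 - r) m$; combined with the direct term $-\bar q r \cdot(\text{something})$ from $\p_a$ this yields the net coefficient $\bar q r(r - 2)$ appearing in $\psi$ and $\zeta$. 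Keeping the algebra for the scalar equation determining $m(\eta)$ straight, and checking that its coefficient is $q + \bar q(r-1)^2$ (hence nonzero), is where care is needed; everything else is routine linear algebra and elementary estimates.
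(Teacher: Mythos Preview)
Your proposal is correct and follows essentially the same approach as the paper's own proof: write the first-order optimality condition explicitly (the paper records it as $b_2 Y_t + \gamma\,\sE[Y_t] + (q+\bar q)\alpha_t + \bar q r(r-2)\sE[\alpha_t] + cX_t = 0$, which is exactly what your $\p_aH^{\rm re} + \int \p_\nu H^{\rm re}\,d\eta$ computation produces), take $\eta$-means to solve the scalar equation for $m(\eta)$ with coefficient $q+\bar q(r-1)^2\ge\lambda_1$, substitute back to obtain \eqref{eq:hat_alpha_lq}, and then read off the regularity properties from the explicit affine form with bounded, bounded-below-denominator coefficients. Your detailed bookkeeping for the $\p_\nu$ contribution is accurate and in fact more transparent than the paper's one-line statement of the first-order condition.
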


\subsubsection{Regularity 
and discrete-time approximations of optimal controls}

Leveraging Lemma \ref{lemma:maximum_principle} and  (H.\ref{assum:mfcE_hat}), 
$\hat{\a} \in \cA$ is 
an optimal control of \eqref{eq:mfcE} if there exists  
 a tuple of processes $(X^{\hat{\a}},Y^{\hat{\a}},Z^{\hat{\a}},\hat{\a})\in \cS^2(\sR^n)\t\cS^2(\sR^n)\t \cH^2(\sR^{n\t d})\t \cA$ such that  for all $t\in [0,T]$,  
\begin{align}\l{eq:mfc_fbsde_hat2}
\begin{split}
\mathrm{d}X^{\hat{\a}}_t&=
b\big(t,X^{\hat{\a}}_t,\hat{\a}_t,\sP_{(X^{\hat{\a}}_t,\hat{\a}_t)}\big)\, \d t
+\sigma(t,X^{\hat{\a}}_t,\sP_{X^{\hat{\a}}_t})\, \d W_t,
\q t\in (0,T],
\\
\mathrm{d}Y^{\hat{\a}}_t&=-\big(\p_x H(t,X^{\hat{\a}}_t,\hat{\a}_t,\sP_{(X^{\hat{\a}}_t,\hat{\a}_t)},Y^{\hat{\a}}_t,Z^{\hat{\a}}_t)
\\
&\q
+\tilde{\sE}[\p_\mu H(t,\tilde{X}^{\hat{\a}}_t,\tilde{{\hat{\a}}},\sP_{(X^{\hat{\a}}_t,\hat{\a}_t)},\tilde{Y}^{\hat{\a}}_t,\tilde{Z}^{\hat{\a}}_t)(X^{\hat{\a}}_t,\hat{\a}_t)]\big)\,\d t
+Z^{\hat{\a}}_t\,\d W_t,
\q  t\in [0,T),
\\
X^{\hat{\a}}_0&=\xi_0,\q Y^{\hat{\a}}_T=\p_x g(X^{\hat{\a}}_T,\sP_{X^{\hat{\a}}_T})+\tilde{\sE}[\p_\mu g(\tilde{X}^{\hat{\a}}_T,\sP_{X^{\hat{\a}}_t})(X^{\hat{\a}}_T)],
\\
\hat{\a}_t&= \hat{\a}(t,
X^{\hat{\a}}_t,Y^{\hat{\a}}_t,\sP_{(X^{\hat{\a}}_t,Y^{\hat{\a}}_t)}), \q t\in [0,T],
\end{split}
\end{align}
where
$(\tilde{X}^{\hat{\a}},\tilde{Y}^{\hat{\a}}, \tilde{Z}^{\hat{\a}},\tilde{{\hat{\a}}})$
is an  independent copy
 of $({X}^{\hat{\a}},{Y}^{\hat{\a}}, {Z}^{\hat{\a}},{{\hat{\a}}})$ defined on a space 
$L^2(\tilde{\Om},\tilde{\cF},\tilde{\sP})$.

Based on the above characterization, the following theorem shows that \eqref{eq:mfcE} admits a unique optimal control in $\cA$ that is $1/2$-H\"older continuous, which is the highest path regularity one can expect for an optimal control process.

\begin{Theorem}\label{TH:ControlRegularity}
Suppose (H.\ref{assum:mfcE}) and (H.\ref{assum:mfcE_hat}(\ref{item:ex})) hold,
and let   $\xi_0\in L^p(\cF_0;\sR^n)$ for some $p\ge 2$.
Then
(\ref{eq:mfcE}) admits a unique optimal control $\hat{\a}=(\hat{\a}_t)_{t\in [0,T]}\in \cA$ satisfying 
$\|\hat{\a}\|_{\cS^p}\le C(1+\|\xi_0\|_{L^p})$.

Assume further  that
(H.\ref{assum:mfcE_hat}(\ref{item:HC})) holds. Then there exists a constant $C\ge 0$ such that   for all 
 $0\le s\le t\le T$,  
$ \sE\left[\sup_{s\le r\le t}|\hat{\a}_r-\hat{\a}_s|^p\right]^{1/p}
 \le
C
(1+\|\xi_0\|_{L^{p}})
|t-s|^{{1}/{2}}.
$ 
\end{Theorem}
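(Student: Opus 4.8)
The plan is to realise the optimal control as the $\hat{\a}$-component of a solution to the McKean--Vlasov forward-backward system \eqref{eq:mfc_fbsde_hat2}, and then transfer path regularity from $(X^{\hat{\a}},Y^{\hat{\a}})$ and their laws to $\hat{\a}$ through the feedback map of (H.\ref{assum:mfcE_hat}). Substituting $\hat{\a}_t=\hat{\a}(t,X_t,Y_t,\sP_{(X_t,Y_t)})$ into \eqref{eq:mfc_fbsde_hat2} yields a Markovian MV-FBSDE whose coefficients are Lipschitz in $(x,y,z)$ and in the joint law and of linear growth, using that $\hat{\a}(t,\cdot)$ is $L_\a$-Lipschitz with $|\hat{\a}(t,0,0,\bm{\delta}_{0})|\le L_\a$ and that the driver of \eqref{eq:mfcE_bsde_nonMarkov} inherits linear growth from (H.\ref{assum:mfcE}(\ref{item:mfcE_growth})) and the affine structure of $b,\sigma$. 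I would then prove existence and uniqueness of a solution $(X^{\hat{\a}},Y^{\hat{\a}},Z^{\hat{\a}})\in\cS^p\t\cS^p\t\cH^p$ (for $\xi_0\in L^p(\cF_0;\sR^n)$) by adapting the continuation-in-time method of \cite{bensoussan2015,carmona2015,guo2023reinforcement}, the required strict monotonicity of the reduced system being supplied by the convexity in (H.\ref{assum:mfcE}(\ref{item:mfcE_convex})) together with $\lambda_1+\lambda_2>0$; this also delivers the a priori bound $\|X^{\hat{\a}}\|_{\cS^p}+\|Y^{\hat{\a}}\|_{\cS^p}+\|Z^{\hat{\a}}\|_{\cH^p}\le C(1+\|\xi_0\|_{L^p})$, whence $\|\hat{\a}\|_{\cS^p}\le C(1+\|\xi_0\|_{L^p})$ from the linear growth of the feedback map. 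By (H.\ref{assum:mfcE_hat}(\ref{item:ex})) the constructed $\hat{\a}_t$ satisfies \eqref{eq:opti_markov} pointwise, hence \eqref{eq:opti_re} $\d\sP\otimes\d t$-a.e., so Lemma \ref{lemma:maximum_principle} shows it is optimal. Uniqueness is obtained separately: since $\a\mapsto(X^\a,\a)$ is affine, (H.\ref{assum:mfcE}(\ref{item:mfcE_convex})) makes $\a\mapsto J(\a;\xi_0)$ convex on $\cA$ and the term $\lambda_1|a'-a|^2+\lambda_2\tilde{\sE}[|\tilde{\a}'-\tilde{\a}|^2]$ makes it strictly convex, so the minimiser is unique.

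\textbf{Step 2: linear growth of $Z^{\hat{\a}}$ via Malliavin calculus (the main obstacle).} For the short-time control of $Y^{\hat{\a}}$ below it is not enough to know $Z^{\hat{\a}}\in\cH^p$ when $p>2$; I would upgrade to $Z^{\hat{\a}}\in\cS^p$ with linear growth in $(X^{\hat{\a}}_t,\sP_{X^{\hat{\a}}_t})$. Freezing the flow of laws $(\sP_{X^{\hat{\a}}_t})_{t}$ along the optimal path, the continuation argument of Step 1 produces a decoupling field $Y^{\hat{\a}}_t=u(t,X^{\hat{\a}}_t)$ with $u(t,\cdot)$ $L$-Lipschitz; differentiating the forward SDE in the Malliavin sense and using that Lipschitz images of Malliavin-differentiable random variables remain Malliavin-differentiable with derivative bounded by the Lipschitz constant, one identifies a version of $Z^{\hat{\a}}_t$ controlled by $L\,|\sigma(t,X^{\hat{\a}}_t,\sP_{X^{\hat{\a}}_t})|$, hence $|Z^{\hat{\a}}_t|\le C(1+|X^{\hat{\a}}_t|+\|\sP_{X^{\hat{\a}}_t}\|_2)$ and $\sup_t\sE[|Z^{\hat{\a}}_t|^p]\le C(1+\|\xi_0\|_{L^p}^p)$. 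I expect this, together with the monotonicity verification in Step 1, to be where the real work lies: as stressed after \eqref{eq:nonMarkovian_optimal}, the nonlinear dependence on the law of the control obstructs the reduction to a pointwise Hamiltonian minimiser, so both the monotonicity of the reduced system and the justification of the $Z$-representation must be carried out for a genuinely coupled MV system; the path-regularity estimates below extend those of \cite{zhang2017} for decoupled FBSDEs to this setting.

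\textbf{Step 3: path regularity and conclusion.} With the uniform bounds on $X^{\hat{\a}},Y^{\hat{\a}},Z^{\hat{\a}}$ in hand, for $0\le s\le t\le T$ I would write, for $s\le r\le t$, $X^{\hat{\a}}_r-X^{\hat{\a}}_s$ and $Y^{\hat{\a}}_r-Y^{\hat{\a}}_s$ as a drift integral over $[s,r]$ plus a stochastic integral, apply Jensen to the drift term and Burkholder--Davis--Gundy to the martingale term, and use the linear growth of $b,\sigma$ in $(x,a,\text{law})$, of the driver of \eqref{eq:mfcE_bsde_nonMarkov} in $(x,y,z,\text{law})$, and of $\hat{\a}$ in $(x,y,\text{law})$; since $Z^{\hat{\a}}\in\cS^p$, the drift contributions are $O(|t-s|)$ and the BDG contribution is $O(|t-s|^{1/2})$, giving
\[
\sE\Big[\sup_{s\le r\le t}|X^{\hat{\a}}_r-X^{\hat{\a}}_s|^p+\sup_{s\le r\le t}|Y^{\hat{\a}}_r-Y^{\hat{\a}}_s|^p\Big]^{1/p}\le C(1+\|\xi_0\|_{L^p})\,|t-s|^{1/2}.
\]
Passing to laws, $\cW_2(\sP_{(X^{\hat{\a}}_r,Y^{\hat{\a}}_r)},\sP_{(X^{\hat{\a}}_s,Y^{\hat{\a}}_s)})\le\sE[|X^{\hat{\a}}_r-X^{\hat{\a}}_s|^2+|Y^{\hat{\a}}_r-Y^{\hat{\a}}_s|^2]^{1/2}$ is $O(|t-s|^{1/2})$ uniformly in $r\in[s,t]$. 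Finally, writing $\mu_r:=\sP_{(X^{\hat{\a}}_r,Y^{\hat{\a}}_r)}$ and decomposing $\hat{\a}_r-\hat{\a}_s=[\hat{\a}(r,X^{\hat{\a}}_r,Y^{\hat{\a}}_r,\mu_r)-\hat{\a}(s,X^{\hat{\a}}_r,Y^{\hat{\a}}_r,\mu_r)]+[\hat{\a}(s,X^{\hat{\a}}_r,Y^{\hat{\a}}_r,\mu_r)-\hat{\a}(s,X^{\hat{\a}}_s,Y^{\hat{\a}}_s,\mu_s)]$, I bound the first bracket by (H.\ref{assum:mfcE_hat}(\ref{item:HC})), which contributes a factor $(1+|X^{\hat{\a}}_r|+|Y^{\hat{\a}}_r|+\|\mu_r\|_2)|t-s|^{1/2}$, and the second by $L_\a(|X^{\hat{\a}}_r-X^{\hat{\a}}_s|+|Y^{\hat{\a}}_r-Y^{\hat{\a}}_s|+\cW_2(\mu_r,\mu_s))$; taking $\sup_{s\le r\le t}$, then the $L^p$-norm, and invoking the bounds of Steps 1--3 yields $\sE[\sup_{s\le r\le t}|\hat{\a}_r-\hat{\a}_s|^p]^{1/p}\le C(1+\|\xi_0\|_{L^p})|t-s|^{1/2}$, as claimed.
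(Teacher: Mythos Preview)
The proposal is correct and follows essentially the same route as the paper's proof: reduce the Pontryagin system to the Markovian MV-FBSDE \eqref{eq:mfc_fbsde2} via the feedback map, establish well-posedness and stability by the continuation method (Propositions \ref{prop:mcfE1}--\ref{prop:fbsde_wellposedness}), extract a Lipschitz decoupling field and use a Malliavin argument \`a la \cite{zhang2017} to get $|Z_t|\le C|\sigma(t,X_t,\sP_{X_t})|$ and hence $Z\in\cS^p$ (Theorem \ref{TH:fbsde_Regularity}), then derive the $1/2$-H\"older regularity of $X,Y$ by BDG and transfer it to $\hat{\a}$ through the time--space decomposition you wrote, with uniqueness coming from strict convexity (Lemma \ref{lemma:J}). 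The only organisational difference is that the paper first solves the FBSDE in $\cS^2\times\cS^2\times\cH^2$ and upgrades to $\cS^p$ afterwards via the decoupling field, whereas you state the $\cS^p$ solvability up front; this is immaterial.
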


The proof of Theorem \ref{TH:ControlRegularity} is given in Section \ref{sec:regularity_mfcE}.
We  shall  prove in Theorem 
\ref{TH:fbsde_Regularity} that   \eqref{eq:mfc_fbsde_hat2} admits a   
$1/2$-H\"{o}lder continuous solution in 
$ \cS^2(\sR^n)\t\cS^2(\sR^n)\t \cH^2(\sR^{n\t d})$,
which along with   the time regularity of $\hat \alpha $
in (H.\ref{assum:mfcE_hat}(\ref{item:HC})) yields
  the desired path regularity of the optimal control.

Using  Theorem \ref{TH:ControlRegularity},
we  analyze  the convergence rate of 
the discrete-time   problem \eqref{eq:mfcE_constant}
to the continuous-time problem \eqref{eq:mfcE}.
The following H\"older  regularity assumption of the coefficients will be needed
to  quantify 
the  time discretization error of the controlled dynamics and the running cost in \eqref{eq:mfcE_constant}.

\begin{Assumption}\l{assum:mfc_Holder_t}
Assume  the notation of (H.\ref{assum:mfcE}). 
The functions $b_0,b_1,b_2,b_3,\sigma_0,\sigma_1,\sigma_2$ 
in (H.\ref{assum:mfcE}(\ref{item:mfcE_lin}))
are $1/2$-H\"{o}lder continuous,
and 
there exists   $\hat{K}\ge 0$ such that 
 for all $t,t'\in [0,T]$, $(x,a,\eta)\in \sR^n\t  \bA\t \cP_2(\sR^n\t \sR^k)$, 
$|f(t,x,a,\eta)- f(t',x,a,\eta)|\le \hat{K}(1+|x|^2+|a|^2+\|\eta\|^2_2)|t-t'|^{1/2}$.

\end{Assumption}

 Now we present the main result of this section, which  
  proves that the value function 
 $V_\pi(\xi_0)$ in \eqref{eq:mfcE_constant}
converges to the value function $V(\xi_0)$
in \eqref{eq:mfcE}
 with  order $1/2$ as the stepsize $|\pi|$ tends to zero. 
The proof  
is given in Section \ref{sec:conv_PCPT}.

\begin{Theorem}\l{thm:PCPT_discrete_value}
Suppose (H.\ref{assum:mfcE}), (H.\ref{assum:mfcE_hat})
and (H.\ref{assum:mfc_Holder_t}) hold.
Then there exists  $C>0$  such that   for all 
$\xi_0\in L^2(\cF_0;\sR^n)$ and for every partition
$\pi$  of $[0,T]$ with  stepsize $|\pi|$,
$$
V_\pi(\xi_0)- V(\xi_0)\le C(1+\|\xi_0\|^2_{L^{2}})|\pi|^{1/2}.
$$
Assume further that  $\bA$ is   compact.
Then 
$$
|V_\pi(\xi_0)- V(\xi_0)|\le C(1+\|\xi_0\|^2_{L^{2}})|\pi|^{1/2}.
$$
\end{Theorem}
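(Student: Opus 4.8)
I would prove both inequalities by comparing, for a well-chosen control, the continuous cost $J(\cdot;\xi_0)$ in \eqref{eq:mfcE} with the discrete cost $J_\pi(\cdot;\xi_0)$ in \eqref{eq:J_pi}. The workhorse is the following lemma, which I would establish first: if $\a\in\cA$ is such that the true state process $X^\a$ (solving \eqref{eq:mfcE_fwd}) and its Euler scheme $X^{\a,\pi}$ (solving \eqref{eq:mfcE_fwd_euler}) satisfy $\|X^\a\|_{\cS^2}+\|X^{\a,\pi}\|_{\cS^2}+\|\a\|_{\cS^2}\le M$, then $|J(\a;\xi_0)-J_\pi(\a;\xi_0)|\le C(1+M^2)|\pi|^{1/2}$, with $C$ independent of $\pi$ and $\a$. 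To prove it I would control three sources of error: the Euler--Maruyama strong error $\sup_i\|X^\a_{t_i}-X^{\a,\pi}_{t_i}\|_{L^2}$, the path oscillation $\sup_i\sup_{t\in[t_i,t_{i+1}]}\|X^\a_t-X^\a_{t_i}\|_{L^2}$, and the time-freezing error $f(t,\cdot)-f(t_i,\cdot)$. The first two are the standard strong-rate and path-regularity estimates for the Euler scheme of a McKean--Vlasov SDE whose coefficients are affine, hence Lipschitz, in $(x,\mu)$ by (H.\ref{assum:mfcE}(\ref{item:mfcE_lin})) and $1/2$-H\"older in $t$ by (H.\ref{assum:mfc_Holder_t}), yielding $O((1+M)|\pi|^{1/2})$ after a Gronwall argument; the third is handled by the quadratic-growth, $1/2$-H\"older-in-time bound on $f$ in (H.\ref{assum:mfc_Holder_t}). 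I would then feed these into the local Lipschitz estimate \eqref{eq:f_local_lipschitz} for $f$ and its analogue for $g$ from Remark \ref{rmk:mfcE_regularity}, using $\cW_2(\sP_{(X^\a_t,\a_t)},\sP_{(X^{\a,\pi}_{t_i},\a_{t_i})})\le\|X^\a_t-X^{\a,\pi}_{t_i}\|_{L^2}+\|\a_t-\a_{t_i}\|_{L^2}$ and Cauchy--Schwarz against the moment bound $M$ to absorb the growth factors; note that for $\a\in\cA_\pi$ the term $\|\a_t-\a_{t_i}\|_{L^2}$ vanishes on each $[t_i,t_{i+1})$.

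\textbf{Upper bound.} By Theorem \ref{TH:ControlRegularity}, \eqref{eq:mfcE} has a unique optimal control $\hat\a\in\cA$ with $\|\hat\a\|_{\cS^2}\le C(1+\|\xi_0\|_{L^2})$ and $\sE[\sup_{s\le r\le t}|\hat\a_r-\hat\a_s|^2]^{1/2}\le C(1+\|\xi_0\|_{L^2})|t-s|^{1/2}$. I would take its left-point discretization $\a^\pi_t:=\sum_{i=0}^{N-1}\hat\a_{t_i}\bm{1}_{[t_i,t_{i+1})}(t)$, which lies in $\cA_\pi$ since $\hat\a$ is $\bA$-valued and square-integrable. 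Standard moment bounds give $\|X^{\a^\pi}\|_{\cS^2}+\|X^{\a^\pi,\pi}\|_{\cS^2}\le C(1+\|\xi_0\|_{L^2})$, so the lemma applies with $M=C(1+\|\xi_0\|_{L^2})$ and gives $J_\pi(\a^\pi;\xi_0)\le J(\a^\pi;\xi_0)+C(1+\|\xi_0\|^2_{L^2})|\pi|^{1/2}$. It then remains to compare $J(\a^\pi;\xi_0)$ with $J(\hat\a;\xi_0)=V(\xi_0)$: the H\"older bound gives $\sup_t\|\a^\pi_t-\hat\a_t\|_{L^2}\le C(1+\|\xi_0\|_{L^2})|\pi|^{1/2}$, stability of \eqref{eq:mfcE_fwd} with respect to the control then gives $\sup_t\|X^{\a^\pi}_t-X^{\hat\a}_t\|_{L^2}\le C(1+\|\xi_0\|_{L^2})|\pi|^{1/2}$, and inserting these into \eqref{eq:f_local_lipschitz} together with the local Lipschitz bound for $g$ yields $|J(\a^\pi;\xi_0)-V(\xi_0)|\le C(1+\|\xi_0\|^2_{L^2})|\pi|^{1/2}$. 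Since $V_\pi(\xi_0)\le J_\pi(\a^\pi;\xi_0)$, chaining the inequalities gives $V_\pi(\xi_0)-V(\xi_0)\le C(1+\|\xi_0\|^2_{L^2})|\pi|^{1/2}$.

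\textbf{Lower bound.} For the reverse inequality under compact $\bA$, I would fix $\eps>0$, pick $\a^\eps\in\cA_\pi$ with $J_\pi(\a^\eps;\xi_0)\le V_\pi(\xi_0)+\eps$, and note that compactness forces $|\a^\eps_t|\le C$ for a constant depending only on $\bA$, uniformly in $\eps$ and in $\pi$; hence $\|\a^\eps\|_{\cS^2}\le C$ and $\|X^{\a^\eps}\|_{\cS^2}+\|X^{\a^\eps,\pi}\|_{\cS^2}\le C(1+\|\xi_0\|_{L^2})$ \emph{uniformly over} $\cA_\pi$. The lemma then gives $J(\a^\eps;\xi_0)\le J_\pi(\a^\eps;\xi_0)+C(1+\|\xi_0\|^2_{L^2})|\pi|^{1/2}$, and since $\a^\eps\in\cA_\pi\subset\cA$ we have $V(\xi_0)\le J(\a^\eps;\xi_0)$. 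Combining, $V(\xi_0)-V_\pi(\xi_0)\le\eps+C(1+\|\xi_0\|^2_{L^2})|\pi|^{1/2}$; letting $\eps\downarrow0$ and using the upper bound closes the two-sided estimate.

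\textbf{Main obstacle.} The essential input is Theorem \ref{TH:ControlRegularity}: without the $1/2$-H\"older regularity of $\hat\a$ the comparison $|J(\a^\pi;\xi_0)-V(\xi_0)|=O(|\pi|^{1/2})$ in the upper bound breaks down, so all the depth of the argument sits there. Beyond that, the care needed is in (a) the Euler--Maruyama strong-rate estimate for the McKean--Vlasov SDE \eqref{eq:mfcE_fwd}, which is routine for Lipschitz coefficients but must be carried out with the Wasserstein dependence and the $1/2$-H\"older-in-time regularity of the data, while tracking how the constant depends on the moments of the control; and (b) the absence of any a priori bound on $\eps$-optimal controls of \eqref{eq:mfcE_constant} when $\bA$ is unbounded --- this is precisely why compactness of $\bA$ is imposed for the two-sided bound, supplying the uniform moment bounds the lemma requires (a coercivity estimate exploiting the strong convexity of $f$ in (H.\ref{assum:mfcE}(\ref{item:mfcE_convex})) would be an alternative route).
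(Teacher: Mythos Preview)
Your proposal is correct and follows essentially the same route as the paper's proof: for the upper bound you discretize the optimal $\hat\a$ using Theorem \ref{TH:ControlRegularity} and control the three error sources (Euler strong error, path oscillation, time-freezing of $f$) via (H.\ref{assum:mfcE}(\ref{item:mfcE_lin})), (H.\ref{assum:mfc_Holder_t}) and \eqref{eq:f_local_lipschitz}; for the lower bound you use compactness of $\bA$ to get uniform moment bounds over $\cA_\pi$ and apply the same Euler/oscillation/time-freezing estimates to an $\eps$-optimizer. The only cosmetic difference is that the paper compares $J_\pi(\hat\a^\pi;\xi_0)$ directly with $J(\hat\a;\xi_0)$ in one step, whereas you insert the intermediate continuous cost $J(\a^\pi;\xi_0)$ and factor out a reusable lemma; both decompositions yield the same residual terms and the same rate.
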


\begin{Remark}
\label{rmk:regularity_comparison}
   To the best of our knowledge, this is the first result on  a half-order convergence rate for the discrete-time approximation of general linear–convex extended MFC problems.

This result is new even for classical MFC problems  without mean field interactions in controls.
In this setting, \cite[Proposition 12]{carmona2019} established the same convergence rate, but     under stronger assumptions that restrict to closed loop controls and require higher order differentiability of the cost functions. 
   Specifically,   \cite{carmona2019} 
restricts the analysis to   controls of the form $\a_t=\phi(t,X_t)$ with $\phi\in C^{1,2}_b([0,T]\t\sR^n)$,
and assumes that 
the decoupling field of \eqref{eq:mfc_fbsde_hat2} and the function $\hat{\a}$ in (H.\ref{assum:mfcE_hat})
to be  twice differentiable   with uniformly Lipschitz continuous derivatives.
    These conditions   require the  cost functions $f$ and $g$ in \eqref{eq:mfcE} 
to be three--times differentiable in $(x,a,\mu)$ with bounded and Lipschitz continuous derivatives.
 In contrast, we optimize over general open-loop controls  
 and require only first-order differentiability of the cost functions.

\end{Remark}

\begin{Remark}
\l{rmk:non_compact}
The compactness of $\bA$ 
in Theorem \ref{thm:PCPT_discrete_value}
is used to  ensure  that 
the $\cH^2$-norms of 
discrete-time optimal  controls for \eqref{eq:mfcE_constant}  are   uniformly bounded  with respect to the partitions $\pi$.
Similar half-order convergence rates   can be established if one restricts \eqref{eq:mfcE_constant}
to controls whose  $\cH^2$-norms are uniformly bounded.  
\end{Remark}

 We further  show that an  
 optimal control  of the discrete-time control problem \eqref{eq:mfcE_constant}
 converges strongly to the optimal control of \eqref{eq:mfcE} with rate $\cO(|\pi|^{1/4} )$.
 The proof 
is given in Section \ref{sec:conv_PCPT}.

\begin{Theorem}\l{thm:control_PCPT_discrete}
Suppose (H.\ref{assum:mfcE}), (H.\ref{assum:mfcE_hat})
and (H.\ref{assum:mfc_Holder_t}) hold,
and $\bA$ is  compact.
Then 
 there exists a constant $C>0$  such that  
for all 
$\xi_0\in L^2(\cF_0;\sR^n)$, all partitions
$\pi$  of $[0,T]$ with  stepsize $|\pi|$,
all  $\eps\ge 0$,    
and  all $\a \in \cA_{\pi}$ with 
$J_\pi(\a;\xi_0)\le V_{\pi}(\xi_0)+\eps$, 
$$
\|\hat{\a}-{\a}\|_{\cH^2}\le C\big((1+\|\xi_0\|_{L^{2}})|\pi|^{1/4} + \sqrt{\eps}\big),
$$
where  $\hat{\a}\in\cA$ is the optimal control of \eqref{eq:mfcE}.  
\end{Theorem}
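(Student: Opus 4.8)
The plan is to exploit the strong convexity of the cost functional in the control, which follows from (H.\ref{assum:mfcE}(\ref{item:mfcE_convex})), together with the first-order optimality of $\hat\alpha$ from Lemma \ref{lemma:maximum_principle}. The starting point is the standard convexity estimate for the continuous-time problem: since $f$ is strongly convex in $(a,\eta)$ with modulus $\lambda_1|a'-a|^2+\lambda_2\tilde{\sE}[|\tilde\alpha'-\tilde\alpha|^2]$ and $g$ is convex, an It\^o-expansion argument applied along $X^{\hat\alpha}$ and $X^{\alpha}$ (for an arbitrary $\alpha\in\cA$) combined with the adjoint equation \eqref{eq:mfcE_bsde_nonMarkov} yields
\begin{align*}
J(\alpha;\xi_0)-J(\hat\alpha;\xi_0)\ge (\lambda_1+\lambda_2)\,\|\hat\alpha-\alpha\|^2_{\cH^2}
+\big(\text{first-order term in the optimality condition}\big),
\end{align*}
and the first-order term is nonnegative (or vanishes) by \eqref{eq:opti_re}. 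Hence $\|\hat\alpha-\alpha\|^2_{\cH^2}\le C\,(J(\alpha;\xi_0)-V(\xi_0))$ for all $\alpha\in\cA$. Therefore it suffices to control $J(\alpha;\xi_0)-V(\xi_0)$ when $\alpha\in\cA_\pi$ satisfies $J_\pi(\alpha;\xi_0)\le V_\pi(\xi_0)+\eps$.

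The second step is to split this gap as
$$
J(\alpha;\xi_0)-V(\xi_0)=\big(J(\alpha;\xi_0)-J_\pi(\alpha;\xi_0)\big)+\big(J_\pi(\alpha;\xi_0)-V_\pi(\xi_0)\big)+\big(V_\pi(\xi_0)-V(\xi_0)\big).
$$
The middle term is at most $\eps$ by assumption. The last term is bounded by $C(1+\|\xi_0\|^2_{L^2})|\pi|^{1/2}$ by Theorem \ref{thm:PCPT_discrete_value} (using compactness of $\bA$). For the first term, one compares the continuous-time cost $J(\alpha;\xi_0)$ along the true SDE \eqref{eq:mfcE_fwd} with the discrete cost $J_\pi(\alpha;\xi_0)$ along the Euler--Maruyama scheme \eqref{eq:mfcE_fwd_euler}, for the \emph{same} piecewise constant control $\alpha\in\cA_\pi$. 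Using (H.\ref{assum:mfc_Holder_t}) for the time-H\"older regularity of the coefficients, the affine structure of $b,\sigma$, the local-Lipschitz/quadratic-growth bounds on $f,g$ from Remark \ref{rmk:mfcE_regularity}, and a standard strong-error estimate for Euler--Maruyama of McKean--Vlasov SDEs (the controls here being frozen, so no regularity of $\alpha$ in time is needed beyond it being $\cH^2$-bounded, which holds uniformly in $\pi$ since $\bA$ is compact), one gets $|J(\alpha;\xi_0)-J_\pi(\alpha;\xi_0)|\le C(1+\|\xi_0\|^2_{L^2})|\pi|^{1/2}$. Combining, $J(\alpha;\xi_0)-V(\xi_0)\le C(1+\|\xi_0\|^2_{L^2})|\pi|^{1/2}+\eps$, and feeding this into the convexity estimate from the first step gives
$$
\|\hat\alpha-\alpha\|^2_{\cH^2}\le C\big((1+\|\xi_0\|^2_{L^2})|\pi|^{1/2}+\eps\big),
$$
from which the claimed bound $\|\hat\alpha-\alpha\|_{\cH^2}\le C\big((1+\|\xi_0\|_{L^2})|\pi|^{1/4}+\sqrt{\eps}\big)$ follows by taking square roots (and, if needed, adjusting the polynomial dependence on $\|\xi_0\|_{L^2}$).

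The main obstacle I expect is making the convexity estimate rigorous in the mean-field setting: one must carefully handle the extra L-derivative terms $\p_\nu H^{\textrm{re}}$ coming from the control-law dependence, and ensure that the cross terms produced by expanding $J(\alpha;\xi_0)-J(\hat\alpha;\xi_0)$ via It\^o's formula along the adjoint process combine exactly into the optimality condition \eqref{eq:opti_re} tested against $\alpha_t-\hat\alpha_t$ (which requires $\alpha_t$ to be a legitimate competitor pointwise, i.e.\ $\bA$-valued — satisfied since $\cA_\pi\subset\cA$). A secondary technical point is the Euler--Maruyama strong-error bound for the McKean--Vlasov dynamics with distribution-dependent coefficients: the half-order rate is classical for Lipschitz coefficients, but here $b_3,\sigma_2$ act on moments of the law, so one needs a Gr\"onwall argument on $\sE[\sup_{t\le T}|X^{\alpha}_t-X^{\alpha,\pi}_t|^2]$ that propagates the $|\pi|^{1/2}$ local error while absorbing the Wasserstein distances $\cW_2(\sP_{(X^\alpha_t,\alpha_t)},\sP_{(X^{\alpha,\pi}_{t_i},\alpha_{t_i})})$; the uniform-in-$\pi$ $\cH^2$-bound on controls (Remark \ref{rmk:non_compact}) and on states is what keeps the constant $C$ independent of $\pi$.
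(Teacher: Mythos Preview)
Your proposal is correct and follows essentially the same approach as the paper. The paper packages your first step (the strong convexity estimate $J(\alpha;\xi_0)-J(\hat\alpha;\xi_0)\ge (\lambda_1+\lambda_2)\|\hat\alpha-\alpha\|^2_{\cH^2}$, derived from the convexity of $H$ and the optimality condition \eqref{eq:opti_re}) as a separate lemma (Lemma~\ref{lemma:J}, equation~\eqref{eq:coercive}), and then uses exactly your three-term decomposition, citing Step~2 of the proof of Theorem~\ref{thm:PCPT_discrete_value} for the Euler--Maruyama bound $|J(\alpha;\xi_0)-J_\pi(\alpha;\xi_0)|\le C(1+\|\xi_0\|^2_{L^2})|\pi|^{1/2}$ uniformly over $\alpha\in\cA_\pi$, and Theorem~\ref{thm:PCPT_discrete_value} itself for $V_\pi(\xi_0)-V(\xi_0)$.
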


To the best of our knowledge, 
  Theorem \ref{thm:control_PCPT_discrete} is the first strong convergence result on discretization  errors  of optimal control processes.

\subsection{Error analysis for general  extended MFC problems}

In this section, we consider a more generic set-up where both the drift and  diffusion coefficients of the state dynamics   depend nonlinearly on the  joint law of the state and   control processes, but make certain stronger regularity   assumptions.
In this setting, we show that the piecewise constant control approximation converges with the improved order of 1 in the time step.
This is the 
\emph{maximum achievable rate} for such an approximation, and it matches the convergence rate observed in practice; see Section \ref{numericalresults}.

We impose the following standing assumption on the coefficients of \eqref{eq:mfcE_fwd} and \eqref{eq:mfcE}. 

\begin{Assumption}\l{assum:A1} 
Let $\bA\subset \sR^k$ be a nonempty   set,
 $(b,\sigma):[0,T]\t \sR^n\t \sR^k\t  \cP_2(\sR^n\t \sR^k)\to \sR^n\times \sR^{n\t d} $,
$f:[0,T]\t \sR^n\t \sR^k\t  \cP_2(\sR^n\t \sR^k)\to\sR$ 
and $g:\sR^n\t  \cP_2(\sR^n)\to\sR$ 
be continuous functions. There exists a constant $C> 0$ such that for all 
$  t \in [0,T], x, x' \in \mathbb{R}^n$, $\mu , \mu' \in \mathcal{P}_2(\mathbb{R}^n \times \mathbb{R}^k)$, $a,a' \in \textbf{A}$ and $\nu\in \cP_2(\sR^n)$,
\begin{equation*}
\begin{split}
&|b(t,x,a,\mu) - b(t,x',a',\mu')| 
\le C\left(|x-x'|+ |a-a'| + \mathcal{W}_2\left(\mu,\mu'\right)\right),\\
& |\sigma(t,x,a,\mu) - \sigma(t,x',a',\mu')| \le C \left(|x-x'| + |a-a'| + \mathcal{W}_2\left(\mu,\mu'\right)\right),\\
& \int_0^T (|b(t,0,0,\bm{\delta}_{0_{n+k}})|^2 + |\sigma(t,0,0,\bm{\delta}_{0_{n+k}})|^2) \diff t \le C,\\
&|f(t,x,a,\mu)|\le C \left(1 + |x|^2 + |a|^2 + \|\mu\|_2^2\right),
\q
 |g(x,\nu)|\le C \left(1 + |x|^2 + \|\nu\|_2^2\right).
\end{split}
\end{equation*}
\end{Assumption}

Condition (H.\ref{assum:A1})
is the standard assumption for studying  (extended) MFC problems (see e.g., \cite{pham2018}).
It ensures 
that  for any  $\alpha\in \cA$, \eqref{eq:mfcE_fwd} has a unique solution $X^{\alpha}\in \cS^2(\sR^n)$, and the control problem  \eqref{eq:mfcE} is well-defined. 

We now introduce a dynamic version of 
\eqref{eq:mfcE}, which will be used to 
  analyze the control discretization error. 
For each $(t,\mu)\in [0,T] \times \mathcal{P}_2(\mathbb{R}^n)$,
let 
$\xi\in L^2(\cF_t; \sR^n)$
be such that $\cL_{\xi}=\mu$,
and 
consider  the following   control problem 
\bb\l{eq:mfcE2}
V(t,\mu)\coloneqq \inf_{\a\in \cA} J(\alpha; t,\xi)
\q
\textnormal{with}
\;
J(\alpha; t,\xi)\coloneqq \sE\bigg[
\int_t^T f(s,X^\a_s,\a_s,\sP_{(X^\a_s,\a_s)})\, \d s+g(X^\a_T,\sP_{X^\a_T})
\bigg],
\ee
where $X^\a$ is governed by 
\begin{equation}\label{stateprocessX}
 \d X^\a_s=
b(s,X^\a_s,\a_s,\sP_{(X^\a_s,\a_s)})\, \d s
+\sigma(s,X^\a_s,\a_s, \sP_{(X^\a_s,\a_s)})\, \d W_s,
\q s\in [t,T];
\q X^\a_t=\xi.
 \end{equation}
Note that by \cite[Proposition 2.4]{djete2022mckean}
and the continuity of $f$ and $g$, 
 $J(\alpha; t,\xi)$ satisfies the law invariance property, i.e., 
 it depends on the law of $
 \xi$  instead of the specific choice of
the random variable  itself.
Thus the   function $V$
  in 
\eqref{eq:mfcE2} can be identified as a function on $[0,T]\times \cP_2(\sR^n)$.
   
In the sequel, we analyze the piecewise constant control approximation of \eqref{eq:mfcE2},
and restrict  attention, for simplicity, to uniform partitions.
Let $\pi=\{t=t_0<\cdots<t_N \}$ 
be a uniform partition of $[t,T]$ with stepsize $h$ and $N=\lceil T/h \rceil $,  
 we 
 consider the following 
 approximate control problem:
\bb\label{valuefunction_h}
    V_\pi^c(t,\mu) \coloneqq \inf_{\alpha \in \cA_h} J( 
    \alpha; t,\xi),
\ee
where 
$J( \alpha; t,\xi)$ is defined in \eqref{eq:mfcE2},
    and $\cA_{h}$ is the  set of  piecewise constant controls  on $\pi$ defined as in \eqref{eq:A_pi}. 
  For clarity of exposition, we retain the continuous-time state dynamics \eqref{stateprocessX} in \eqref{valuefunction_h}. Similar results and analyzes naturally extend to the setting where the state dynamics are also discretized in time   as in  
\eqref{eq:mfcE_fwd_euler}.

To derive the improved convergence rate
of 
\eqref{valuefunction_h}, we extend techniques developed in  \cite[Theorem 2.1]{jakobsen2019} for   stochastic optimal control problems without mean field interactions. 
This requires performing local expansions of the value function over a small time interval using It\^{o}'s formula.
Compared with 
\cite{jakobsen2019},
the presence of the law of the state and control processes in the state dynamics, cost functional, and value function makes the analysis more involved, as it requires applying It\^o's formula along a flow of probability measures.
This involves  differentiating functions defined on the space of probability measures, for which 
we employ the notion of the $L$--derivative. We review its definition and the related function spaces $C^{1,2}$ and $C^{1,2}_2$  in Appendix 
\ref{sec:ito_formula}.


Before presenting the regularity conditions that facilitate the analysis, we define two  operators that arise from applying It\^{o}'s formula over  time intervals on which the control process remains constant.
For each $u
\in C^{1,2}_2([0,T]\times \mathcal{P}_2(\mathbb{R}^n))$ and 
$\nu \in \mathcal{P}_2 (\mathbb{R}^n \times \mathbb{R}^k )$, let 
$\mu = \pi_1\sharp \nu$ the first marginal of $\nu$,
and define 
\begin{equation}
\l{defLx}
\begin{split}
  (\Lx u)(t,\nu)  &\coloneqq \partial_t u(t,\mu)+ \int_{\mathbb{R}^n \times \mathbb{R}^k} \Big(
    \langle b(t, x, a, \nu), \partial_{\mu} u(t,\mu)(x) \rangle \\
  & \quad + \frac{1}{2} \operatorname{Tr}\left[
    (\sigma \sigma^{\intercal})(t, x, a, \nu)
    \partial_x \partial_{\mu} u(t,\mu)(x)
    \right]
  \Big) \nu(\mathrm{d}x, \mathrm{d}a).
\end{split}
\end{equation}
For each 
$u
\in C^{1,2}_2([0,T]\times \mathcal{P}_2(\mathbb{R}^n
\times \mathbb{R}^k))$,
and $
\nu \in \mathcal{P}_2(\mathbb{R}^n
\times \mathbb{R}^k)$, define 
\begin{equation}
\l{defLxa}
\begin{split}
    (\Lxa u)(t,\nu)& \coloneqq  \partial_t u(t,\nu)+ \int_{\mathbb{R}^n \times \mathbb{R}^k} \bigg(
    \left\langle 
    \begin{pmatrix} b(t, x, a, \nu)
    \\
    0_k\end{pmatrix}, \partial_{\nu} u(t,\nu)(x,a) 
    \right\rangle \\
  & \quad + \frac{1}{2} \operatorname{Tr}\left[
   \begin{pmatrix} (\sigma \sigma^{\intercal})(t, x, a, \nu) & 0_{n    \times k}
    \\
    0_{k
    \times n} & 0_{k    \times k}\end{pmatrix}
    \partial_{(x,a)} \partial_{\nu} u(t,\nu)(x,a)
    \right]
  \bigg) \nu(\mathrm{d}x, \mathrm{d}a),
\end{split}
\end{equation}
where $0_{m\times n}$ denotes the $m\times n$ 
zero matrix.
Note that  on an time interval  where   the control $\alpha$ is constant,
the pair $ 
 (X^\alpha ,\alpha)$ can be treated   as  an It\^{o} process with a degenerate second component,  which corresponds to the zeros in \eqref{defLxa}.

We   now formulate the additional regularity assumptions required for the higher order analysis.
For brevity, let $F:[0,T] \times \mathcal{P}_2(\mathbb{R}^n \times \mathbb{R}^k) \to \mathbb{R}$, be $F(t,\nu)\coloneqq  \int_{\mathbb{R}^n \times \mathbb{R}^k} f(t,x,a, \nu)\nu(\d x, \d a)$.

\begin{Assumption}\l{assum:B1}
\l{assum:B2}
 There exists $C>0$ such that
 \begin{enumerate}[(1)]
\item 
\l{enum_i}
$F \in C_2^{1,2}([0,T] \times \mathcal{P}_2(\mathbb{R}^n \times \mathbb{R}^k))$ and for all $(t,\nu) \in [0,T] \times \mathcal{P}_2(\mathbb{R}^n \times \mathbb{R}^k)$,
$|(\Lxa F) (t, \nu)| \le C $.
\item 
\l{enum_ii}
$V_\pi^c \in C_2^{1,2}([0,T] \times \mathcal{P}_2(\mathbb{R}^n))$,  
$[0,T] \times \mathcal{P}_2(\mathbb{R}^n \times \mathbb{R}^k) \ni (t,\nu) \mapsto (\Lx V^c_{\pi})(t,\nu)$ is in $ C_2^{1,2}([0,T] \times \mathcal{P}_2(\mathbb{R}^n \times \mathbb{R}^k))$ and for all $(t,\nu) \in [0,T]
\times \mathcal{P}_2(\mathbb{R}^n \times \mathbb{R}^k)$,
 $|(\Lxa \Lx V^c_{\pi})(t,\nu)| \le C.$ 
 \end{enumerate}
\end{Assumption}

\begin{Remark}
    Assumption (H.\ref{assum:B1}) is formulated in terms of the operators $\Lx$ and $\Lxa$ for compactness of notation.
    Sufficient conditions for (H.\ref{assum:B1}(\ref{enum_i})) can be given in terms of    bounds of $\partial_t F$, $\partial_{\nu} F$, $\partial_{(x,a)} \partial_{\nu} F$, $\sigma$ and $b$. For  (H.\ref{assum:B1}(\ref{enum_ii})), application of product differentiation rules to terms in $\Lxa \Lx V^c_{\pi}$ leads to a lengthy expansion, which can be bounded as long as derivatives of $V^c_{\pi}$ and of the the data up to a sufficiently high order are bounded. A similar requirement on bounded derivatives is already present in \cite[Proposition 2.4]{jakobsen2019} (see also the proof of Theorem 2.1 there) without mean-field interactions, and here extends to derivatives with respect to the measure argument.
    These boundedness assumptions on the data are restrictive and do not include, for instance, the linear-convex setting analyzed in Section 
    \ref{sec:error_linear_convex}. Also, to the best of  our knowledge, the existence of derivatives of the value function $V^c_{\pi}$ with piecewise constant controls has not established, even in the case without mean field interaction.
\end{Remark}

 The following theorem establishes the optimal first-order convergence of the piecewise constant control approximation 
 \eqref{valuefunction_h} to \eqref{eq:mfcE}. 
 The proof is given in Section \ref{MFC1_general}.
We confirm this first-order convergence through numerical examples in Section \ref{numericalresults}.
  
  \begin{Theorem}\label{improvedorder}
 Suppose   (H.\ref{assum:A1}) and (H.\ref{assum:B2}) hold. 
 There exists a constant $C\ge 0$
 such that 
 for all $s\in [0,T], \mu_s \in \mathcal{P}_2(\mathbb{R}^n),$ and $h>0$, 
\begin{equation}\label{mainresult1}
    0 \le V_\pi^c(s,\mu_s)-V(s,\mu_s) \le Ch.
\end{equation}
\end{Theorem}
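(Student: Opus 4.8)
The plan is to establish the two inequalities in \eqref{mainresult1} separately. The lower bound $V(s,\mu_s)\le V^c_\pi(s,\mu_s)$ is immediate, since $\cA_h\subset\cA$ and the state dynamics in \eqref{valuefunction_h} are the same continuous-time dynamics \eqref{stateprocessX}; hence the infimum over the smaller admissible class can only be larger. The substance is the upper bound $V^c_\pi(s,\mu_s)-V(s,\mu_s)\le Ch$. Following the strategy of \cite[Theorem 2.1]{jakobsen2019}, I would compare $V^c_\pi$ against $V$ by running $V^c_\pi$ forward along the discretized control problem and comparing it, subinterval by subinterval, with the (sub-)dynamic-programming relation for $V$; the $C^{1,2}_2$-regularity in (H.\ref{assum:B1}) is exactly what lets us Taylor-expand via It\^o's formula along flows of measures.

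The key steps, in order. First, I would record the dynamic programming principle for the continuous-time value function $V$ from \eqref{eq:mfcE2}: for $t_i<t_{i+1}$ in $\pi$ and any admissible $\alpha$, $V(t_i,\mu_{t_i})\le \sE\big[\int_{t_i}^{t_{i+1}} f(r,X^\alpha_r,\alpha_r,\sP_{(X^\alpha_r,\alpha_r)})\,\d r\big]+V(t_{i+1},\sP_{X^\alpha_{t_{i+1}}})$, with equality approached by near-optimal controls. Second — the crucial local-error estimate — I would fix a subinterval $[t_i,t_{i+1}]$, freeze the control at a constant value $a$, and apply It\^o's formula along the flow of measures to $r\mapsto V^c_\pi(r,\sP_{X^\alpha_r})$ on that interval; by construction this produces the operator $\Lx V^c_\pi$ inside the integrand, and the frozen running cost is $F(t_i,\sP_{(X^\alpha_{t_i},a)})$. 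The difference between the ``frozen'' increment of $V^c_\pi$ plus frozen cost and the corresponding continuous-time quantity is then controlled by a second application of It\^o's formula, now in the $(x,a)$-variables over the constant-control interval, which brings in $\Lxa\Lx V^c_\pi$ and $\Lxa F$; Assumption (H.\ref{assum:B1}) bounds both by $C$, so each local error is $\mathcal O(h^2)$. Third, I would establish the sub-dynamic-programming relation for $V^c_\pi$ — the analogue of \cite[Lemma 1]{jakobsen2019} referenced as Lemma \ref{lemma1} — namely that $V^c_\pi(t_i,\mu)$ equals the infimum over the first frozen control $a\in L^2(\cF_{t_i};\bA)$ of the frozen running cost plus $V^c_\pi(t_{i+1},\sP_{X^a_{t_{i+1}}})$, up to an error already absorbed in the local estimate. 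Fourth, I would combine steps two and three: choosing in the continuous-time problem the piecewise-constant control that is optimal (or $\varepsilon$-optimal) for $V^c_\pi$ on each subinterval, the accumulated telescoping sum of $N=\lceil T/h\rceil$ local errors of size $\mathcal O(h^2)$ gives $V^c_\pi(s,\mu_s)-V(s,\mu_s)\le C\cdot N\cdot h^2\le Ch$. Standard a priori $\cS^2$-bounds on $X^\alpha$ under (H.\ref{assum:A1}), uniform in $\alpha$ and in the partition, are used throughout to make the $\mathcal O(h^2)$ constants uniform.

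The main obstacle I expect is the local-error step (step two), specifically making the double It\^o expansion rigorous along a flow of measures: one must justify that $r\mapsto \sP_{X^\alpha_r}$ is an admissible curve for the $C^{1,2}_2$ It\^o formula (this is why the regularity hypotheses are phrased directly on $\Lx V^c_\pi$, $\Lxa\Lx V^c_\pi$ and $\Lxa F$, rather than on raw derivatives), and one must carefully track the degenerate second component — the constant control — which is what the block-zero structure in \eqref{defLxa} encodes. A secondary subtlety is the treatment of the terminal layer when $T/h$ is not an integer and the interplay between the non-discretized state dynamics retained in \eqref{valuefunction_h} and the frozen running cost; but these are bookkeeping issues compared with the flow-of-measures It\^o calculus. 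I would handle the expansions by first proving the estimate for smooth, bounded test functions and then invoking the assumed $C^{1,2}_2$-regularity of $V^c_\pi$ to pass to the limit, exactly paralleling the structure of the proof of \cite[Theorem 2.1]{jakobsen2019} but with the measure derivatives in place of the spatial ones.
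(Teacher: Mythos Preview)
Your ingredients are right (the sub-DPP for $V^c_\pi$, the double It\^o expansion producing $\Lxa\Lx V^c_\pi$ and $\Lxa F$, the local $O(h^2)$ error), but the assembly in step~4 goes the wrong way and does not yield the upper bound. Plugging the piecewise-constant optimizer $\alpha^\pi$ of $V^c_\pi$ into the continuous-time problem gives $V(s,\mu_s)\le J(\alpha^\pi;s,\xi)=V^c_\pi(s,\mu_s)$, which is the trivial lower bound you already noted. Telescoping along $\alpha^\pi$ does not help either: on each $[t_i,t_{i+1}]$ the DPP for $V$ only yields $V(t_i,\mu_{t_i})-V(t_{i+1},\mu_{t_{i+1}})\le \int_{t_i}^{t_{i+1}}F(r,\nu^\pi_r)\,\d r$, so each summand is nonnegative and you recover $V^c_\pi-V\ge 0$ again. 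If instead you telescope along an optimal $\hat\alpha$ for $V$, the sub-DPP for $V^c_\pi$ lands you at $V^c_\pi(t_{i+1},\mu^{\textnormal{frozen}}_{t_{i+1}})$ with a flow that no longer matches $\hat\mu_{t_{i+1}}$, and the telescope breaks. Your step~1 (DPP for $V$) is in fact never used in the paper's argument.

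The paper's route is cleaner and avoids telescoping entirely. Lemma~\ref{lemma1} is \emph{not} the sub-DPP you describe; it is the pointwise inequality
\[
-\big((\Lx V^c_\pi)(s,\nu)+F(s,\nu)\big)\le Ch
\quad\text{for every }(s,\nu)\in[0,T]\times\cP_2(\sR^n\times\sR^k),
\]
proved on a single interval of length $h$ by combining the one-step sub-DPP for $V^c_\pi$ with the double It\^o expansion (your steps~2 and~3 are the proof of Lemma~\ref{lemma1}, not separate building blocks). The main proof then takes an \emph{arbitrary} $\alpha\in\cA$---not piecewise constant---applies It\^o to $V^c_\pi$ along its flow to get $V^c_\pi(T,\mu_T)=V^c_\pi(s,\mu_s)+\int_s^T(\Lx V^c_\pi)(r,\nu_r)\,\d r$, and inserts the pointwise bound to obtain $J(\alpha;s,\xi)\ge V^c_\pi(s,\mu_s)-Ch$. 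Taking the infimum over $\alpha$ gives the upper bound directly. The point you were missing is that the pointwise bound holds for \emph{all} $\nu$, so it can be integrated along any continuous-time control; this is what replaces the approximation of a near-optimal $\alpha$ by a piecewise-constant one. Finally, no smoothing or limiting argument is needed: (H.\ref{assum:B2}) already \emph{assumes} $V^c_\pi\in C^{1,2}_2$ and the boundedness of $\Lxa\Lx V^c_\pi$ and $\Lxa F$, so the It\^o expansions are applied directly.
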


\section{Proofs of main results}
\label{sec:proofs}
\subsection{Proofs of Propositions \ref{prop:mfc_hat_a},
\ref{prop:control_measure}
and \ref{prop:lq}}
\label{sec:proof_feedback_representation}

\begin{proof}[Proof of Proposition \ref{prop:mfc_hat_a}]
Observe that   
the reduced Hamiltonian ${H}^{\textrm{re}}$  satisfies
for all 
$ (t,x, a,\eta,y)\in [0,T]\t\sR^n\t \sR^k\t \cP_2(\sR^n\t \sR^k)\t \sR^n$,
\begin{align*}
{H}^{\textrm{re}}(t,x, a,\eta,y)
&= \la {b}(t,x,a,\eta), y\ra +\tilde{f}(t,x,a,\pi_1\sharp\eta)
\\
&= \psi_1(t,x,a,y)+\psi_2(t,\eta,y) +\tilde{f}(t,x,a,\pi_1\sharp\eta),
\end{align*}
where 
$\psi_1(t,a,x,y)\coloneqq \la b_0(t)+b_1(t)x + b_2(t)a, y\ra$,
$ \psi_2(t,\eta,y)\coloneqq \la b_3(t)\bar{\eta},y\ra $
and 
 $\tilde{f}(t,x,a,\mu)\coloneqq {f}(t,x,a,\mu\t \bm{\delta}_{0_k})$.
Moreover,  
$\p_a {H}^{\textrm{re}}(t,x, a,\eta,y)=b_2^*(t)y+\p_a\tilde{f}(t,x,a,\pi_1\sharp\eta)$
and $\p_\nu {H}^{\textrm{re}}(t,x, a,\eta,y)(\cdot)=\p_\nu\psi_2(t,\eta,y) (\cdot)=\b(t)y$, where 
 $\b(t)\in \sR^{k\t n}$ is the  submatrix  formed by deleting the first $n$ rows of $b^*_3(t)\in \sR^{(n+k)\t n}$.
 
Define the function $G:[0,T]\t \sR^n\t \sR^k\t  \cP_2(\sR^n)\t \cP_2(\sR^n)\t \sR^n\to \sR $
such that for all 
$(t,x,a,\mu,\rho,y)\in [0,T]\t \sR^n\t \sR^k\t  \cP_2(\sR^n)\t\cP_2(\sR^k)\t \sR^n $, 
\begin{align*}
G(t,x,a,\mu,\rho,y)\coloneqq 
 \psi_1(t,x,a,y)+\psi_2(t,\mu\times \bm{\delta}_a,\bar{\rho}) +\tilde{f}(t,x,a,\mu)
\end{align*}
with $\bar{\rho}=\int_{\sR^n}y\,\d \rho(y)$.
 We further  define the map 
$ \hat{\a}:[0,T]\t \sR^n\t  \sR^n\t \cP_2(\sR^n\t \sR^n)\to \bA$
satisfying for all $(t,x,y,\chi)\in [0,T]\t \sR^n\t  \sR^n\t \cP_2(\sR^n\t \sR^n)$ that
\bb\l{eq:mfc_a_hat}
\hat{\a}(t,x,y,\chi)= \argmin_{\a\in \bA}G(t,x,a,\pi_1\sharp \chi,\pi_2\sharp \chi,y).
\ee
Since the function $f$   depends only on  
 the first  marginal $\pi_1\sharp \eta$,
 we see from (H.\ref{assum:mfcE}(\ref{item:mfcE_convex}))
 that
 $\lambda_1>0$ and 
  the map
  $\bA\ni a\mapsto  \tilde{f}(t,x,a,\mu)\in \sR$ is $\lambda_1$-strongly convex,
  which along with the linearity of the map
$\bA\ni a\mapsto  \psi_1(t,x,a,y)+\psi_2(t,\mu\times \bm{\delta}_a,\bar{\rho})\in \sR$
shows $\bA\ni a\mapsto G(t,x,a,\pi_1\sharp \chi,\pi_2\sharp \chi,y)\in \sR$ is $\lambda_1$-strongly convex.
Then by following the same argument as in  \cite[Lemma 3.3]{carmona2018a},
one  can show
the above function $\hat{\a}$ is 
well-defined, measurable,
 locally bounded and Lipschitz continuous with respect to $(x,y,\chi)$ uniformly in $t$.

To show  that    $\hat{\a}$ satisfies  (H.\ref{assum:mfcE_hat}(\ref{item:ex})),
  it remains to verify  \eqref{eq:opti_markov}.
Since  $\hat{\a}$ is a minimizer of $G$ over $\bA$,
for all $(t,x,y,\chi)\in [0,T]\t \sR^n\t  \sR^n\t \cP_2(\sR^n\t \sR^n)$,
$a\in \bA$ that 
\begin{align}\l{eq:hat_a_optimal_G}
\begin{split}
0&
\ge \la \p_a G(t,x,\hat{\a}(t,x,y,\chi),\pi_1\sharp \chi,\pi_2\sharp \chi,y),
\hat{\a}(t,x,y,\chi)-a\ra 
\\
&
= \la b_2^*(t)y +\b(t)\ol{\pi_2\sharp \chi}+\p_a\tilde{f}(t,x,\hat{\a}(t,x,y,\chi),\pi_1\sharp\chi),
\hat{\a}(t,x,y,\chi)-a\ra,
\end{split}
\end{align}
where $\ol{\pi_2\sharp \chi}=\int_{\sR^n} y\,\d {\pi_2\sharp \chi}(y)$.
Since $f(t,x,a,\eta)$ depends on $\eta$  only via its   first marginal,  
$\p_a\tilde{f}(t,x,a,\pi_1\sharp\chi)=\p_a\tilde{f}(t,x,a,\pi_1\sharp\phi(t,\chi))$, with  $\phi$ defined by \eqref{eq:opti_markov}.
Hence, by the expression of $\p_a{H}^{\textrm{re}}$, 
\begin{align*}
0&
\ge \la b_2^*(t)y +\b(t)\ol{\pi_2\sharp \chi}+\p_a\tilde{f}(t,x,\hat{\a}(t,x,y,\chi),\pi_1\sharp\phi(t,\chi)),
\hat{\a}(t,x,y,\chi)-a\ra
\\
&= \la \p_a {H}^{\textrm{re}}(t,x, \hat{\a}(t,x,y,\chi),\phi(t,\chi),y)
+\b(t)\ol{\pi_2\sharp \chi},
\hat{\a}(t,x,y,\chi)-a\ra,
\end{align*}
which is the optimality condition \eqref{eq:opti_markov} 
since for all $ (t,a,\eta)$,
$$
\int_{\sR^n\t \sR^n}\p_\nu {H}^{\textrm{re}}(t,\tilde{x}, a,\eta,\tilde{y})(\cdot)\,\d \chi(\tilde{x},\tilde{y})=\int_{\sR^n\t \sR^n} \b(t)\tilde{y}\,\d \chi(\tilde{x},\tilde{y})=\b(t)\ol{\pi_2\sharp \chi}.
$$

 We now prove the time regularity of $\hat{\a}$ using  the   H\"{o}lder regularity of $b_2, b_3$ and $\p_a f$.
Let $t,t'\in [0,T], (x,y,\chi)\in \sR^n\t \sR^n\t \cP_2(\sR^n\t \sR^n)$,
 $\hat{a}=\hat{\a}(t,x,y,\chi)$ and $\hat{a}'=\hat{\a}(t',x,y,\chi)$. 
 By 
the optimal condition \eqref{eq:hat_a_optimal_G}, 
$\la  \p_a G(t,x,\hat{a},\pi_1\sharp \chi,\pi_2\sharp \chi,y)
,\hat{a}'-\hat{a}\ra\ge 0
 \ge 
\la  \p_a G(t',x,\hat{a}',\pi_1\sharp \chi,\pi_2\sharp \chi,y),\hat{a}'-\hat{a}\ra
$.
Moreover, by the
$\lambda_1$-strong convexity of 
 $\bA\ni a\mapsto G(t,x,a,\pi_1\sharp \chi,\pi_2\sharp \chi,y)\in \sR$,  
\begin{align*}
&G(t,x,\hat{a}',\pi_1\sharp \chi,\pi_2\sharp \chi,y)-G(t,x,\hat{a},\pi_1\sharp \chi,\pi_2\sharp \chi,y)
\\
&\q
-\la \p_{a}G(t,x,\hat{a},\pi_1\sharp \chi,\pi_2\sharp \chi,y), \hat{a}'-\hat{a}\ra \ge \lambda_1|\hat{a}'-\hat{a}|^2,
\end{align*}
from which, by exchanging the role of $ \hat{a}'$ and $ \hat{a}$ 
in the above inequality and summing the resulting estimates, one  can deduce that  
\begin{align*}
2\lambda_1|\hat{a}'-\hat{a}|^2
&\le
\la\hat{a}'-\hat{a}, \p_{a}G(t,x,\hat{a}',\pi_1\sharp \chi,\pi_2\sharp \chi,y)-\p_{a}G(t,x,\hat{a},\pi_1\sharp \chi,\pi_2\sharp \chi,y)\ra
\\
&\le
\la\hat{a}'-\hat{a}, \p_{a}G(t,x,\hat{a}',\pi_1\sharp \chi,\pi_2\sharp \chi,y)-\p_{a}G(t',x,\hat{a}',\pi_1\sharp \chi,\pi_2\sharp \chi,y)\ra.
\end{align*}
Hence, by the expression of $\p_a G$, 
\begin{align*}
|\hat{a}'-\hat{a}|
&\le
C |\p_{a}G(t,x,\hat{a}',\pi_1\sharp \chi,\pi_2\sharp \chi,y)-\p_{a}G(t,x,\hat{a}',\pi_1\sharp \chi,\pi_2\sharp \chi,y)|
\\
&\le C
\Big(|
\p_a\tilde{f}(t,x,\hat{\a}',\pi_1\sharp\chi)-\p_a\tilde{f}(t',x,\hat{\a}',\pi_1\sharp\chi)|
+|b_2^*(t)-b_2^*(t')||y|
\\
&\q +|\b(t)-\b(t')|\|\chi\|_2\Big),
\end{align*}
for a constant $C$ independent of $(t,t',x,y,\chi)$.
Then, by applying the H\"{o}lder regularity assumption of the coefficients,  
\begin{align*}
|\hat{a}'-\hat{a}|
&\le
C(1+|x|+\|\chi\|_2+|\hat{a}'|+|y|)|t-t'|^{1/2},
\end{align*}
which, together with the fact that the function $\hat{a}$ is locally bounded and  of linear growth in $(x,y,\chi)$, 
leads to the desired H\"{o}lder continuity of $\hat{\a}$.
\end{proof}

\begin{proof}[Proof of Proposition \ref{prop:control_measure}]
Consider the function
$\hat{\a}:[0,T]\t  \cP_2(\sR^{n} \t \sR^{n}) \to \bA$ satisfying for all 
$(t,\chi)\in [0,T]\t  \cP_2(\sR^n\t \sR^n)$ that 
$$
\hat{\a}(t,\chi)=\argmin_{a\in \bA} h(t,\chi,a),
\q
\textnormal{
with
$h(t,\chi,a)\coloneqq \tilde{\sE}[H^{\textrm{re}}(t,\tilde{X},a,\tilde{\sP}_{(\tilde{X},a)},\tilde{Y})]$,
 }
$$
where  $(\tilde{X},\tilde{Y})\in L^2(\tilde{\Om},\tilde{\cF},\tilde{\sP};\sR^n\t \sR^n)$
has distribution $\chi$.

We first show the function $\hat{\a}$ is well-defined.
By   the linearity of $b$ and the convexity of $f$ in (H.\ref{assum:mfcE}),  
the map
$\bA\ni a\mapsto h(t,\chi,a)\in \sR$ is strongly convex with factor $\lambda_1+\lambda_2>0$,
which admits a unique minimizer on the nonempty closed convex set $\bA$.
The measurability of $\hat{\a}$ follows from \cite[Lemma 3.3]{carmona2018a}.

Then, we prove that the function $\hat{\a}$ satisfies 
the optimality condition \eqref{eq:opti_markov}.
By using (H.\ref{assum:mfcE}),  for almost all $(t,\om)\in [0,T]\t \tilde{\Om}$, 
the mapping $\sR^n\ni a\mapsto  H^{\textrm{re}}(t,\tilde{X}(\om),a,\tilde{\sP}_{(\tilde{X},a)},\tilde{Y}(\om))$
is differentiable with the derivative being   at most of linear growth in $(\tilde{X}(\om),\tilde{Y}(\om))$.
Hence, Lebesgue's differentiation theorem shows that 
$h$ is differentiable  with respect to $a$ with 
the derivative 
\begin{align}\l{eq:p_ah}
\p_a h(t,\chi,a)=
 \tilde{\sE}[\p_aH^{\textrm{re}}(t,\tilde{X},a,\tilde{\sP}_{(\tilde{X},a)},\tilde{Y})]
 + \tilde{\sE}\big[
 \bar{\sE}[
 \p_\nu H^{\textrm{re}}(t,\tilde{X},a,\tilde{\sP}_{(\tilde{X},a)},\tilde{Y})(\bar{X},a)
 ]\big],
\end{align}
where $\bar{X}\in L^2(\bar{\Om},\bar{\cF},\bar{\sP};\sR^n)$ has distribution $\tilde{\sP}_{\tilde{X}}$.

Using  $b_2\equiv 0$ and 
the   condition on  $f$, 
$H^{\textrm{re}}(t,x,a,\eta,y)=\la b_0(t)+b_1(t)x +b_3(t)\bar{\eta}, y\ra +f_1(t,x,\pi_1\sharp \eta,\pi_2\sharp \eta)
+f_2(t,a,\pi_1\sharp \eta,\pi_2\sharp \eta)$.
Hence, 
for all $(t,x,y,a)$,
$\p_aH^{\textrm{re}}(t,x,a,\tilde{\sP}_{(\tilde{X},a)},y)
=\p_a f_2(t,a,\tilde{\sP}_{\tilde{X}},\bm{\delta}_a)$,
and 
$v\mapsto \p_\nu H^{\textrm{re}}(t,x,a,\eta,y)(v)$ can be chosen as a function
defined only on $\sR^k$ (not on $\sR^n\t \sR^k$ as in the general setting). This 
  simplifies \eqref{eq:p_ah} into:
\bb\l{eq:p_ah_2}
\p_a h(t,\chi,a)=
\p_aH^{\textrm{re}}(t,x,a,\tilde{\sP}_{(\tilde{X},a)},y)
 + \tilde{\sE}\big[
 \p_\nu H^{\textrm{re}}(t,\tilde{X},a,\tilde{\sP}_{(\tilde{X},a)},\tilde{Y})(a)
\big]
\ee
 for all $(t,\chi,a)\in[0,T]\t  \cP_2(\sR^{n} \t \sR^{n}) \t \bA$. 
Consequently, 
by 
the fact that $\hat{\a}(t,\chi)$ is a minimizer
and  the identity that $\tilde{\sP}_{(\tilde{X},\hat{\a}(t,\chi))}=\phi(t,\chi)$
(see \eqref{eq:PushF}),
the function $\hat{\a}$ satisfies 
the optimality condition \eqref{eq:opti_markov}:
for all $(t,\chi,a)\in [0,T]\t  \cP_2(\sR^n\t \sR^n)\t \bA$,
\begin{align}\l{eq:mfcE_optimal_a}
\begin{split}
0
&\ge \la \p_a h(t,\chi,\hat{\a}(t,\chi)),\hat{\a}(t,\chi)-a\ra
\\
&=\la 
\p_aH^{\textrm{re}}(t,x,\hat{\a}(t,\chi),\phi(t,\chi),y)
\\
&\q 
+ \tilde{\sE}[\p_\nu H^{\textrm{re}}(t,\tilde{X},\hat{\a}(t,\chi),\phi(t,\chi),\tilde{Y})(\hat{\a}(t,\chi))]
 ,\hat{\a}(t,\chi)-a\ra,
\end{split}
\end{align}
whenever  $(\tilde{X},\tilde{Y})\in L^2(\tilde{\Om},\tilde{\cF},\tilde{\sP};\sR^n\t \sR^n)$
has  distribution $\chi$.
Note that
in the present setting
 \eqref{eq:opti_markov}
is independent of $(x,y)$
since $\p_aH^{\textrm{re}}(t,x,\hat{\a}(t,\chi),\phi(t,\chi),y)
=\p_a f_2(t,a,\tilde{\sP}_{\tilde{X}},\bm{\delta}_{\hat{\a}(t,\chi)})$.

Finally, we establish 
the spatial and time regularity of $\hat{\a}$.
Similar to \cite[Lemma 3.3]{carmona2018a},
for all $t\in[0,T]$, 
 the $(\lambda_1+\lambda_2)$-strong convexity of $a\mapsto h(t,\chi,a)$
 implies 
$|\hat{\a}(t,\bm{\delta}_{0_{n+n}})-a_0|\le 
(\lambda_1+\lambda_2)^{-1}
|\p_a h(t,\bm{\delta}_{0_{n+n}},a_0)|$,
where $a_0$ an arbitrary element in $\bA$.
Then by \eqref{eq:p_ah_2} and (H.\ref{assum:mfcE}(\ref{item:mfcE_growth})),  
$\|\hat{\a}(\cdot,\bm{\delta}_{0_{n+n}})\|_{L^\infty(0,T)}<\infty$. 
Now let $(t,\chi),(t',\chi')\in [0,T]\t \cP_2(\sR^n\t \sR^n)$,
$\hat{a}=\hat{\a}(t,\chi)$, $\hat{a}'=\hat{\a}(t',\chi')$
and $(\tilde{X},\tilde{Y}),(\tilde{X}',\tilde{Y}')\in L^2(\tilde{\Om},\tilde{\cF},\tilde{\sP};\sR^n\t \sR^n)$
have  distributions $\chi$ and $\chi'$, respectively.
By following 
a  similar argument as that for 
 Proposition \ref{prop:mfc_hat_a},
one can deduce from 
the  $(\lambda_1+\lambda_2)$-strong convexity of $a\mapsto h(t,\chi,a)$,
the expression of $\p_{a}h$ in \eqref{eq:p_ah_2} 
and the Lipschitz continuity of $(\p_a f_2,\p_\nu f_1,\p_\nu f_2)$
in (H.\ref{assum:mfcE}(\ref{item:mfcE_growth}))
 that 
\begin{align*}
&|\hat{a}'-\hat{a}|
\le
C|\p_{a}h(t,\chi,\hat{a}')-  \p_{a}h(t',\chi',\hat{a}')|
\\
&\le
C\big(|\p_a f_2(t,\hat{a}',\tilde{\sP}_{\tilde{X}},\bm{\delta}_{\hat{a}'})
-\p_a f_2(t',\hat{a}',\tilde{\sP}_{\tilde{X}'},\bm{\delta}_{\hat{a}'})|
\\
&\q
+|\tilde{\sE}\big[
 \p_\nu H^{\textrm{re}}(t,\tilde{X},\hat{a}',\tilde{\sP}_{(\tilde{X},\hat{a}')},\tilde{Y})(\hat{a}')
 \big]
 -\tilde{\sE}\big[
 \p_\nu H^{\textrm{re}}(t',\tilde{X}',\hat{a}',\tilde{\sP}_{(\tilde{X}',\hat{a}')},\tilde{Y}')(\hat{a}')
 \big]|
 \big)
 \\
&\le
C\Big(
\cW_2(\chi,\chi')+
|\p_a f_2(t,\hat{a}',\tilde{\sP}_{\tilde{X}},\bm{\delta}_{\hat{a}'})
-\p_a f_2(t',\hat{a}',\tilde{\sP}_{\tilde{X}},\bm{\delta}_{\hat{a}'})|
+|b_3(t)-b_3(t')|\tilde{\sE}[|\tilde{Y}|]
\\
&\q
\big|
\tilde{\sE}\big[
 \p_\nu f_1(t,\tilde{X},\tilde{\sP}_{\tilde{X}},\bm{\delta}_{\hat{a}'})(\hat{a}')
 \big]
 -\tilde{\sE}\big[
 \p_\nu f_1(t',\tilde{X},\tilde{\sP}_{\tilde{X}},\bm{\delta}_{\hat{a}'})(\hat{a}')
 \big]
 \big|
 \\
&\q +
 \big|
\p_\nu f_2(t,\hat{a}',\tilde{\sP}_{\tilde{X}},\bm{\delta}_{\hat{a}'})(\hat{a}')
-\p_\nu f_2(t',\hat{a}',\tilde{\sP}_{\tilde{X}},\bm{\delta}_{\hat{a}'})(\hat{a}')
\big|
 \Big),
\end{align*}
where the constant $C$ is independent of $t,t',\chi,\chi'$.
Setting $t'=t$ in  the above estimate gives     
$|\hat{\a}(t,\chi)-\hat{\a}(t,\chi')|\le C\cW_2(\chi,\chi')$,
which along with 
$\|\hat{\a}(\cdot,\bm{\delta}_{0_{n+n}})\|_{L^\infty(0,T)}<\infty$
 implies 
$|\hat{\a}(t,\chi)|\le C(1+\|\chi\|_2)$.
The desired time regularity of $\hat{\a}$ then follows from the additional assumptions on the time regularity of coefficients.
\end{proof}

\begin{proof}[Proof of Proposition \ref{prop:lq}]

Note that the drift coefficient of \eqref{eq:mfcE_fwd} reads as
\begin{align*}
b(t,x,a,\eta)&=b_0(t)+b_1(t)x + b_2(t)a +  \beta(t) \bar{x} + \gamma(t) \bar{a},
\end{align*}
where 
$\bar{x} =  \int x \, \mathrm{d}\eta(x,a)$.
The definition of the reduced Hamiltonian \eqref{eq:hamiltonian_re} and the openness of the set $\bA$ imply that 
it suffices to find
a function $\hat{\a}:[0,T]\t \sR\t \sR\t \cP_2(\sR\t\sR)\to \bA$
such that 
for all $t\in [0,T]$, $X_t,Y_t\in L^2(\Om;\sR)$,
  $\a_t=\hat{\a}(t,X_t,Y_t,\sP_{(X_t,Y_t)})$ satisfies 
\begin{align}\label{eq:opti_cond}
b_2(t)Y_t +\gamma(t) \sE[Y_t] + \big(q(t) + \bar{q}(t)\big)\a_t + \bar{q}(t) r(t)(r(t) -2)\mathbb{E}[\a_t] + c(t) X_t = 0.
\end{align}
Taking expectations on both sides of \eqref{eq:opti_cond} yields
\begin{align}\label{eq:ex_alpha}
\sE[\a_t] = \frac{-(b_2(t) + \gamma(t) ) \sE[Y_t] - c(t)\sE[X_t]}{q(t) + \bar{q}(t)\big(r(t)-1\big)^2},
\end{align}
which is well-defined since
 $q(t)\ge \lambda_1>0$
 and $\bar{q}(t)\ge 0$.
Substituting \eqref{eq:ex_alpha}   into  \eqref{eq:opti_cond} and using the definition \eqref{eq:hat_alpha_lq} of $\hat \alpha$ shows that  $\hat \alpha$
satisfies 
(H.\ref{assum:mfcE_hat}(\ref{item:ex})).
Condition (H.\ref{assum:mfcE_hat}(\ref{item:HC})) holds if  
$b_2,\gamma,q,\bar{q},r,c$ are $1/2$-H\"{o}lder continuous on $[0,T]$.
 \end{proof}

\subsection{Proof of Theorem \ref{TH:ControlRegularity}}
\label{sec:regularity_mfcE}

To prove Theorem \ref{TH:ControlRegularity}, we analyze the path regularity of  solutions to  
the MV-FBSDE
 \eqref{eq:mfc_fbsde_hat2}.
Note that \eqref{eq:mfc_fbsde_hat2} can be equivalently formulated 
as 
\begin{subequations}\label{eq:mfc_fbsde2}
\begin{alignat}{2}
\mathrm{d} X_t&=\hat{b}(t,X_t,Y_t,\sP_{(X_t,Y_t)})\,\d t +\sigma (t,X_t, \sP_{X_t})\, \d W_t, \q t\in (0,T];
\q
&& X_0=\xi_0,
\l{eq:mfc_fwd_fb2}
\\
\mathrm{d} Y_t&=-\hat{f}(t,X_t,Y_t,Z_t, \sP_{(X_t,Y_t,Z_t)})\,\d t+Z_t\,\d W_t,
 \q t\in [0,T);
\q && Y_T=\hat{g}(X_T,\sP_{X_T})
\l{eq:mfc_bwd_fb2}
\end{alignat}
\end{subequations}
with  coefficients defined by:  
for all $(t,x,y,z,a,\mu, \chi, \rho)\in [0,T]\t \sR^n\t \sR^n \t \sR^{n \times d} \t \bA \t \cP_2(\sR^n)\t \cP_2(\sR^n \t \sR^{n}) \t \cP_2(\sR^n \t \sR^{n} \t  \sR^{n \t d})$,
\begin{align}\l{eq:mfc_coefficients2}
\begin{split}
&\hat{b}(t,x,y,\chi) \coloneqq b(t,x,\hat{\a}(t,x,y,\chi),\phi(t,\chi)),
\\
&\hat{f}(t,x,y,z,\rho)
\\\q& \coloneqq \p_x H(t,x,\hat{\a}(t,x,y,\pi_{1,2} \sharp \rho),\phi(t,\pi_{1,2} \sharp \rho),y,z)
\\
&\q
 +\int_{E}
\p_\mu H(t,\tilde{x},\hat{\a}(t,\tilde{x},\tilde{y},\pi_{1,2} \sharp \rho),\phi(t,\pi_{1,2} \sharp \rho),\tilde{y},\tilde{z})
(x,\hat{\a}(t,{x},{y},\pi_{1,2} \sharp \rho))\,\d \rho(\tilde{x},\tilde{y},\tilde{z}),
\\
&\hat{g}(x,\mu) \coloneqq 
\p_x g(x,\mu)+\int_{\sR^n}\p_\mu g(\tilde{x},\mu)(x)\,\d \mu(\tilde{x}),
\end{split}
\end{align}
where 
$E\coloneqq \sR^n\t \sR^n\t \sR^{n\t d}$,
$\phi(t,\chi)$ is defined as in (H.\ref{assum:mfcE_hat}(\ref{item:ex})) and
$\pi_{1,2} \sharp\rho\coloneqq\rho(\cdot \t \sR^{n \t d})$ is the marginal of   $\rho$ on $\sR^n\t\sR^n$.
In the sequel, we  
prove  that \eqref{eq:mfc_fbsde2} admits a unique 
$1/2$-H\"{o}lder continuous solution in 
$ \cS^2(\sR^n)\t\cS^2(\sR^n)\t \cH^2(\sR^{n\t d})$.

We first  show  that  the coefficients of the MV-FBSDE \eqref{eq:mfc_fbsde2}
are  Lipschitz continuous with respect to the spatial variables,
and satisfy     a   monotonicity condition. 
{
Proofs of these    propositions are given in Appendix \ref{appendix:hat_a_existence}.}

\begin{Proposition}\label{prop:mcfE1}
Suppose (H.\ref{assum:mfcE}) and (H.\ref{assum:mfcE_hat}(\ref{item:ex})) hold,
and let the functions 
$(\hat{b},\hat{f},\hat{g})$ 
be defined as in \eqref{eq:mfc_coefficients2}.
Then there exists a constant $C\ge 0$
satisfying 
for all $t\in [0,T]$ that
 the functions 
$(\hat{b}(t,\cdot),\sigma(t,\cdot),\hat{f}(t,\cdot),\hat{g}(t,\cdot))$ 
are  $C$-Lipschitz continuous in all  variables
and satisfy the estimate
 $
 \|\hat{b}(\cdot,{0},{0},\bm{\delta}_{{0}_{n+n}})\|_{L^2(0,T)}
 +\|\sigma(\cdot,0,\bm{\delta}_{0_{n}})\|_{L^\infty(0,T)}
 +
 \|\hat{f}(\cdot,{0},{0}, {0},\bm{\delta}_{{0}_{n+n+nd}})\|_{L^\infty(0,T)}\le C$.

\end{Proposition}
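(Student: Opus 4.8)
The plan is to build each of $\hat b$, $\sigma$, $\hat f$, $\hat g$ from \eqref{eq:mfc_coefficients2} by composing the affine data $(b,\sigma)$ and the globally Lipschitz derivatives $\p_x f$, $\p_\mu f$, $\p_x g$, $\p_\mu g$ (from (H.\ref{assum:mfcE}(\ref{item:mfcE_growth}))) with the Lipschitz feedback $\hat{\a}(t,\cdot)$ and the induced map on laws $\phi(t,\cdot)$, possibly followed by an integration against a probability measure, and then to check that each such operation preserves Lipschitz continuity and at most linear growth, uniformly in $t$. The one ingredient that is not pure composition is a lemma on $\phi$, which I would establish first: for each $t$ the map $\chi\mapsto\phi(t,\chi)$ is Lipschitz on $\cP_2(\sR^n\t\sR^n)$ with a $t$-independent constant and satisfies $\|\phi(t,\chi)\|_2\le C(1+\|\chi\|_2)$. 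This is a coupling argument: given $\chi,\chi'$ and an optimal coupling $(X,Y)\sim\chi$, $(X',Y')\sim\chi'$, by \eqref{eq:PushF} the pairs $(X,\hat{\a}(t,X,Y,\chi))$ and $(X',\hat{\a}(t,X',Y',\chi'))$ have laws $\phi(t,\chi)$ and $\phi(t,\chi')$, so the $L_\a$-Lipschitz continuity of $\hat{\a}(t,\cdot)$ gives $\cW_2(\phi(t,\chi),\phi(t,\chi'))^2\le\sE\big[|X-X'|^2+|\hat{\a}(t,X,Y,\chi)-\hat{\a}(t,X',Y',\chi')|^2\big]\le C\,\cW_2(\chi,\chi')^2$; the growth bound follows by taking $\chi'=\bm{\delta}_{0_{n+n}}$, for which $\phi(t,\chi')=\bm{\delta}_{(0,\hat{\a}(t,0,0,\bm{\delta}_{0_{n+n}}))}$ and $|\hat{\a}(t,0,0,\bm{\delta}_{0_{n+n}})|\le L_\a$.

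Given this lemma, the cases $\sigma$, $\hat b$, $\hat g$ are short. The affine form of $(b,\sigma)$ together with $|\bar\mu-\bar\mu'|\le\cW_2(\mu,\mu')$ makes $\sigma(t,\cdot)$ and $b(t,\cdot)$ globally Lipschitz with $L^\infty(0,T)$-bounded constants; composing $b(t,\cdot)$ with $(x,y,\chi)\mapsto(x,\hat{\a}(t,x,y,\chi),\phi(t,\chi))$ yields the Lipschitz property of $\hat b$, and $\hat b(t,0,0,\bm{\delta}_{0_{n+n}})=b(t,0,\hat{\a}(t,0,0,\bm{\delta}_{0_{n+n}}),\bm{\delta}_{(0,\hat{\a}(t,0,0,\bm{\delta}_{0_{n+n}}))})$ is bounded uniformly in $t$ since $|\hat{\a}(t,0,0,\bm{\delta}_{0_{n+n}})|\le L_\a$ and $b_0,\ldots,b_3\in L^\infty(0,T)$; also $\|\sigma(\cdot,0,\bm{\delta}_{0_n})\|_{L^\infty(0,T)}=\|\sigma_0\|_{L^\infty(0,T)}$. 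For $\hat g(x,\mu)=\p_x g(x,\mu)+\int_{\sR^n}\p_\mu g(\tilde x,\mu)(x)\,\d\mu(\tilde x)$, the first term is $\tilde L$-Lipschitz directly; for the second, an optimal coupling of $\mu,\mu'$ and the joint $\tilde L$-Lipschitz continuity of $(\tilde x,\mu,x)\mapsto\p_\mu g(\tilde x,\mu)(x)$ give $\big|\int_{\sR^n}\p_\mu g(\tilde x,\mu)(x)\,\d\mu(\tilde x)-\int_{\sR^n}\p_\mu g(\tilde x',\mu')(x')\,\d\mu'(\tilde x')\big|\le\sE\big[|\p_\mu g(\tilde X,\mu)(x)-\p_\mu g(\tilde X',\mu')(x')|\big]\le\tilde L\big(2\cW_2(\mu,\mu')+|x-x'|\big)$.

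The main work is $\hat f$. Expanding $H=\la b,y\ra+\la\sigma,z\ra+f$ with $(b,\sigma)$ affine shows that $\p_x H$ is (a linear function of $(y,z)$ with $L^\infty(0,T)$ coefficients) plus $\p_x f$, and $\p_\mu H(\cdot)(x',a')$ is (a linear function of $(y,z)$ with $L^\infty(0,T)$ coefficients, constant in the derivative argument $(x',a')$, and not involving $\tilde z$) plus $\p_\mu f(\cdot)(x',a')$. The $b,\sigma$-contributions are then handled exactly as for $\hat b$, with the integral against $\rho$ reducing to the $\cW_2$-Lipschitz map $\rho\mapsto$ (mean of $(\tilde y,\tilde z)$ under $\rho$). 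For the $\p_x f$, $\p_\mu f$ contributions, I would fix $t$, take two points $(x,y,z,\rho),(x',y',z',\rho')$, couple $\rho,\rho'$ optimally — so that the induced coupling of $\chi\coloneqq\pi_{1,2}\sharp\rho$ and $\chi'\coloneqq\pi_{1,2}\sharp\rho'$ gives both $\cW_2(\chi,\chi')\le\cW_2(\rho,\rho')$ and $L^2$-control of the $(\tilde X,\tilde Y,\tilde Z)$-differences — and bound the difference using the joint $\tilde L$-Lipschitz continuity of $(x,a,\eta,x',a')\mapsto\p_\mu f(t,x,a,\eta)(x',a')$, the $L_\a$-Lipschitz continuity of $\hat{\a}(t,\cdot)$, the Lipschitz continuity of $\phi(t,\cdot)$, and Jensen's inequality; this yields $|\hat f(t,x,y,z,\rho)-\hat f(t,x',y',z',\rho')|\le C(|x-x'|+|y-y'|+|z-z'|+\cW_2(\rho,\rho'))$ with $C$ independent of $t$. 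Evaluating at $(0,0,0,\bm{\delta}_{0_{n+n+nd}})$, the $(y,z)$-linear terms vanish and the remaining $\p_x f(t,0,\hat{\a}(t,0,0,\bm{\delta}_{0_{n+n}}),\bm{\delta}_{(0,\hat{\a}(t,0,0,\bm{\delta}_{0_{n+n}}))})$ and $\p_\mu f(t,0,\hat{\a}(\cdot),\bm{\delta}_{(0,\hat{\a}(\cdot))})(0,\hat{\a}(\cdot))$ are bounded uniformly in $t$ by the linear-growth bounds of (H.\ref{assum:mfcE}(\ref{item:mfcE_growth})) together with $|\hat{\a}(t,0,0,\bm{\delta}_{0_{n+n}})|\le L_\a$, using that for a Dirac measure $\eta$ the $L^2(\eta)$-norm of $\p_\mu f(t,x,a,\eta)(\cdot)$ is just the value of $\p_\mu f$ at the atom. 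The genuine—though still routine—difficulty is the bookkeeping in the $\hat f$ estimate: keeping straight which arguments play the role of the base point $(x,a,\eta)$ and which play the role of the derivative argument $(x',a')$ in the $\p_\mu f$ Lipschitz bound, and arranging a single coupling of $\rho,\rho'$ that simultaneously controls $\cW_2(\chi,\chi')$ and the $\sE[|\cdot|]$-terms. No new idea is needed beyond the $\phi$-lemma and the assumptions already in force.
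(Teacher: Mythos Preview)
Your proposal is correct and follows essentially the same strategy as the paper: establish Lipschitz continuity of $\chi\mapsto\phi(t,\chi)$ first, then handle $\sigma$, $\hat b$, $\hat g$, $\hat f$ by composition and coupling arguments, and finally verify the bounds at zero. The one notable difference is in how you prove the $\phi$-lemma: the paper invokes the Kantorovich duality theorem (Lemma~\ref{lem:Kant}) twice, passing to test functions and back, whereas you use the direct coupling argument via \eqref{eq:PushF} — take an optimal coupling of $\chi,\chi'$, push it forward through $(\mathrm{id},\hat\a(t,\cdot,\cdot,\cdot))$, and bound the resulting $L^2$-distance by the Lipschitz constant of $\hat\a$. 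Your route is more elementary and arguably cleaner; the paper's duality argument buys nothing extra here. The treatment of $\hat f$ is also organized slightly differently (you split off the affine-in-$(y,z)$ contributions of $H$ explicitly, while the paper bundles them into the Lipschitz continuity of $\p_x H$ and $\p_\mu H$), but the substance is identical.
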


\begin{Proposition}\label{prop:mcfE2}
Suppose (H.\ref{assum:mfcE}) and (H.\ref{assum:mfcE_hat}(\ref{item:ex})) hold,
and let the functions 
$(\hat{b},\hat{f},\hat{g})$ 
be defined as in \eqref{eq:mfc_coefficients2}.
Then the functions $(\hat{b},\sigma,\hat{f},\hat{g})$ 
satisfy 
for all 
$t\in [0,T]$, $i\in \{1,2\}$, 
 $\Theta_i\coloneqq (X_i,Y_i,Z_i)\in L^2(\Om; \sR^n\t \sR^m\t \sR^{m\t d})$
 that
$ \sE[\la \hat{g}(X_1,\sP_{X_1})-\hat{g}(X_2,\sP_{X_2}), X_1-X_2\ra]
\ge  0$ and
\begin{align}\l{eq:monotonicity}
\begin{split}
&\sE[\la \hat{b}(t,X_1,Y_1,\sP_{(X_1,Y_1)})-\hat{b}(t,X_2,Y_2,\sP_{(X_2,Y_2)}), Y_1-Y_2\ra]
 \\
&\quad 
+\sE[\la \sigma(t,X_1,\sP_{X_1})-\sigma(t,X_2,\sP_{X_2}),  Z_1-Z_2\ra]
\\
&\quad 
+ \sE[\la -\hat{f}(t,\Theta_1,\sP_{\Theta_1})+\hat{f}(t,\Theta_2,\sP_{\Theta_2}), X_1-X_2\ra] 
\\
&
 \le 
 -2(\lambda_1 + \lambda_2)
  \|\hat{\a}(t,X_1,Y_1,\sP_{(X_1,Y_1)})-\hat{\a}(t,X_2,Y_2,\sP_{(X_2,Y_2)})\|^2_{L^2},
\end{split}
\end{align}
with the constants $\lambda_1,\lambda_2$ in  (H.\ref{assum:mfcE}(\ref{item:mfcE_convex})).
\end{Proposition}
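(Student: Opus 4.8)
The plan is to adapt the classical ``strong convexity of the Hamiltonian implies monotonicity of the associated FBSDE'' argument used for mean field control in \cite{carmona2015, bensoussan2015, guo2023reinforcement}; the one genuinely new ingredient is the careful handling of the control‑marginal ($\p_\nu$) terms coming from the dependence of the data on the law of the control. Throughout I write $\hat a_i\coloneqq \hat\a(t,X_i,Y_i,\sP_{(X_i,Y_i)})$, $\eta_i\coloneqq \phi(t,\sP_{(X_i,Y_i)})$ (so $\eta_i=\sP_{(X_i,\hat a_i)}$ by \eqref{eq:PushF}), $\mu_i\coloneqq \sP_{X_i}$, and I let $(\tilde\Theta_1,\tilde\Theta_2)$ be an independent copy of $(\Theta_1,\Theta_2)$, with $\tilde{\hat a}_i\coloneqq \hat\a(t,\tilde X_i,\tilde Y_i,\sP_{(X_i,Y_i)})$. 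The estimate for $\hat g$ is the easy part: it is a direct consequence of the convexity of $g$ in (H.\ref{assum:mfcE}(\ref{item:mfcE_convex})). Applying that inequality to the pairs $(X_1,X_2)$ and $(X_2,X_1)$ with the same coupling, adding, and taking expectations yields
\[
\sE\big[\la \p_x g(X_1,\mu_1)-\p_x g(X_2,\mu_2),X_1-X_2\ra\big]+\sE\tilde\sE\big[\la \p_\mu g(X_1,\mu_1)(\tilde X_1)-\p_\mu g(X_2,\mu_2)(\tilde X_2),\tilde X_1-\tilde X_2\ra\big]\ge 0,
\]
and, since $\hat g(x,\mu)=\p_x g(x,\mu)+\tilde\sE[\p_\mu g(\tilde X,\mu)(x)]$, Fubini together with the exchangeability of $(X_1,X_2)$ and $(\tilde X_1,\tilde X_2)$ identifies the left‑hand side with $\sE[\la \hat g(X_1,\mu_1)-\hat g(X_2,\mu_2),X_1-X_2\ra]$.

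For \eqref{eq:monotonicity}, the starting observation is that, because $b$ and $\sigma$ are affine in $(x,a)$ and in the relevant moments of the measure argument, for every fixed $(t,y,z)$ the maps $(x,a,\eta)\mapsto \la b(t,x,a,\eta),y\ra$ and $(x,a,\eta)\mapsto\la \sigma(t,x,\pi_1\sharp\eta),z\ra$ are affine in the L‑derivative sense; hence $H(t,\cdot,\cdot,\cdot,y,z)$ inherits from $f$ the joint strong convexity in $(x,a,\eta)$ with the \emph{same} constants $\lambda_1,\lambda_2$. Writing $H_{ij}\coloneqq H(t,X_i,\hat a_i,\eta_i,Y_j,Z_j)$, the identity $H=\la b,\cdot\ra+\la \sigma,\cdot\ra+f$ makes the $f$‑terms cancel and gives
\begin{align*}
&\sE\big[\la \hat b(t,X_1,Y_1,\sP_{(X_1,Y_1)})-\hat b(t,X_2,Y_2,\sP_{(X_2,Y_2)}),Y_1-Y_2\ra+\la \sigma(t,X_1,\mu_1)-\sigma(t,X_2,\mu_2),Z_1-Z_2\ra\big]\\
&\qquad=\sE[H_{11}-H_{21}]-\sE[H_{12}-H_{22}].
\end{align*}

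Next I would apply the strong convexity of $H(t,\cdot,\cdot,\cdot,Y_1,Z_1)$ (base point the $1$–configuration, comparison point the $2$–configuration) and of $H(t,\cdot,\cdot,\cdot,Y_2,Z_2)$ (base point the $2$–configuration, comparison point the $1$–configuration), using for the measure slot the coupling $(\tilde X_i,\tilde{\hat a}_i)\sim\eta_i$; adding the two inequalities, taking expectations, and invoking Fubini with the exchangeability of the copies to rewrite the $\p_\mu H$–contributions in the exact form appearing in $\hat f$ (recall from \eqref{eq:mfc_coefficients2} that $\hat f(t,\Theta_i,\sP_{\Theta_i})=\p_x H(t,\Theta_i)+\tilde\sE[\p_\mu H(t,\tilde\Theta_i)(X_i,\hat a_i)]$), the $\p_x H$– and $\p_\mu H$–terms cancel against $\sE[\la \hat f(t,\Theta_1,\sP_{\Theta_1})-\hat f(t,\Theta_2,\sP_{\Theta_2}),X_1-X_2\ra]$, and one is left (using $\sE\tilde\sE[|\tilde{\hat a}_1-\tilde{\hat a}_2|^2]=\sE[|\hat a_1-\hat a_2|^2]$) with
\begin{align*}
\textnormal{LHS of \eqref{eq:monotonicity}}&\le \sE\big[\la \p_a H(t,\Theta_1)-\p_a H(t,\Theta_2),\hat a_1-\hat a_2\ra\big]\\
&\quad+\sE\tilde\sE\big[\la \p_\nu H(t,\Theta_1)(\tilde X_1,\tilde{\hat a}_1)-\p_\nu H(t,\Theta_2)(\tilde X_2,\tilde{\hat a}_2),\tilde{\hat a}_1-\tilde{\hat a}_2\ra\big]-2(\lambda_1+\lambda_2)\|\hat a_1-\hat a_2\|_{L^2}^2.
\end{align*}

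Finally I would show the first two terms on the right are non‑positive. Evaluating the feedback optimality condition \eqref{eq:opti_markov} at $(t,X_1(\omega),Y_1(\omega),\sP_{(X_1,Y_1)})$ with test point $a=\hat a_2(\omega)\in\bA$ and at $(t,X_2(\omega),Y_2(\omega),\sP_{(X_2,Y_2)})$ with test point $a=\hat a_1(\omega)$, adding the two inequalities, taking expectations, and once more using exchangeability of the copies to identify the $\p_\nu H$–term (here using $\p_a H^{\textrm{re}}=\p_a H$ and $\p_\nu H^{\textrm{re}}=\p_\nu H$ since $\sigma$ is uncontrolled), gives exactly
\[
\sE\big[\la \p_a H(t,\Theta_1)-\p_a H(t,\Theta_2),\hat a_1-\hat a_2\ra\big]+\sE\tilde\sE\big[\la \p_\nu H(t,\Theta_1)(\tilde X_1,\tilde{\hat a}_1)-\p_\nu H(t,\Theta_2)(\tilde X_2,\tilde{\hat a}_2),\tilde{\hat a}_1-\tilde{\hat a}_2\ra\big]\le 0,
\]
which combined with the previous display yields \eqref{eq:monotonicity}. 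The hard part will be the bookkeeping of the independent copies and of the partial L‑derivatives $\p_\mu H,\p_\nu H$: one has to verify that the mean‑field terms generated by the convexity expansion of $H$ match, after Fubini and relabeling of the copies, precisely the terms built into $\hat f$ (so they cancel) and into the optimality condition (so they assemble into a single non‑positive quantity), and the affine structure of $b$ and $\sigma$ is what makes both the inheritance of strong convexity and these cancellations go through.
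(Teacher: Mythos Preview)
Your proposal is correct and follows essentially the same route as the paper: both proofs express the $\hat b,\sigma$ terms as Hamiltonian differences, apply the strong convexity inequality \eqref{eq:mfcE_H_convex} twice (once with base point the $1$-configuration, once with the $2$-configuration), use Fubini and exchangeability of the independent copies to identify the $\p_x H,\p_\mu H$ contributions with the $\hat f$ terms, and finally invoke the feedback optimality condition \eqref{eq:opti_markov} to control the residual $\p_a H,\p_\nu H$ terms. The bookkeeping caveats you flag are exactly the points the paper handles, and no additional ideas are needed.
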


Based on Propositions \ref{prop:mcfE1} and \ref{prop:mcfE2},
we     establish the well-posedness  and stability   of \eqref{eq:mfc_fbsde2}.
The proof 
adapts the method of continuation in \cite{bensoussan2015, carmona2015, guo2023reinforcement}
 to the present setting,
 and 
 is given in Appendix \ref{appendix:hat_a_existence}.

\begin{Proposition}
    \label{prop:fbsde_wellposedness}
{
Suppose (H.\ref{assum:mfcE}) and (H.\ref{assum:mfcE_hat}(\ref{item:ex})) hold. Then, for all  
$t\in [0,T]$ and 
 $\xi\in L^2(\cF_t;\sR^n)$,
there exists a unique triple
$(X^{t,\xi},Y^{t,\xi},Z^{t,\xi}) \in  \cS^2(t,T;\sR^n) \t \cS^2(t,T;\sR^n) \t \cH^2(t,T;\sR^{n \t d})$ 
satisfying \eqref{eq:mfc_fbsde2} on $[t,T]$ with  
the initial condition $X^{t,\xi}_t=\xi$.
Moreover, there exists a constant $C>0$ such that 
it holds for all $t\in [0,T]$  and $\xi,\xi'\in L^2(\cF_t;\sR^n)$ that 
$\|Y^{t,\xi}_t-Y^{t,\xi'}_t\|_{L^2}\le C\|\xi-\xi'\|_{L^2}$,
and
$\|X^{t,\xi}\|_{\cS^2(t,T;\sR^n)}+\|Y^{t,\xi}\|_{ \cS^2(t,T;\sR^n)}+\|Z^{t,\xi}\|_{\cH^2(t,T;\sR^{n\t d})}\le C(1+\|\xi\|_{L^2})$.}
\end{Proposition}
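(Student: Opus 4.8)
The plan is to prove the statement by the \emph{method of continuation} in the coupling parameter, following \cite{bensoussan2015, carmona2015, guo2023reinforcement}, and then to read off the stability and moment bounds from the a priori estimates produced along the way. Fix $t\in[0,T]$; I will run the argument for $t=0$, the general case being identical. For $\tau\in[0,1]$, input processes $(I^b,I^\sigma,I^f)\in\cH^2(\sR^n)\t\cH^2(\sR^{n\t d})\t\cH^2(\sR^n)$ and a terminal datum $\zeta\in L^2(\cF_T;\sR^n)$, I would introduce the interpolated MV-FBSDE
\begin{align*}
\d X_t&=\big(\tau\hat{b}(t,X_t,Y_t,\sP_{(X_t,Y_t)})+I^b_t\big)\,\d t+\big(\tau\sigma(t,X_t,\sP_{X_t})+I^\sigma_t\big)\,\d W_t,\qquad X_0=\xi,\\
\d Y_t&=-\big(\tau\hat{f}(t,X_t,Y_t,Z_t,\sP_{(X_t,Y_t,Z_t)})+(1-\tau)X_t+I^f_t\big)\,\d t+Z_t\,\d W_t,\\
Y_T&=\tau\hat{g}(X_T,\sP_{X_T})+(1-\tau)X_T+\zeta,
\end{align*}
and let $\cI\subset[0,1]$ be the set of $\tau$ for which this system is uniquely solvable in $\cS^2(\sR^n)\t\cS^2(\sR^n)\t\cH^2(\sR^{n\t d})$ for every admissible choice of $(\xi,I^b,I^\sigma,I^f,\zeta)$. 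At $\tau=0$ the forward equation is an explicit SDE and the backward equation is a linear BSDE, so $0\in\cI$; the proposition's existence claim is precisely that $1\in\cI$ with $(I^b,I^\sigma,I^f)\equiv0$ and $\zeta\equiv0$. Note that the $(1-\tau)X_t$ and $(1-\tau)X_T$ terms are designed so that the monotonicity of Proposition \ref{prop:mcfE2} and of $\hat g$ are preserved along the interpolation.

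The heart of the proof is a $\tau$-uniform a priori estimate. Given two solutions $(X^i,Y^i,Z^i)$, $i\in\{1,2\}$, of the $\tau$-system with data $(\xi^i,I^{b,i},I^{\sigma,i},I^{f,i},\zeta^i)$, I would apply It\^o's formula to $s\mapsto\la X^1_s-X^2_s,Y^1_s-Y^2_s\ra$, take expectations, and invoke the monotonicity of $\hat{g}$ together with inequality \eqref{eq:monotonicity} of Proposition \ref{prop:mcfE2}. Writing $\Delta X=X^1-X^2$, $\Delta\hat{\a}_s=\hat{\a}(s,X^1_s,Y^1_s,\sP_{(X^1_s,Y^1_s)})-\hat{\a}(s,X^2_s,Y^2_s,\sP_{(X^2_s,Y^2_s)})$, and analogously for the other increments, this should yield
\[
(1-\tau)\,\sE\Big[|\Delta X_T|^2+\int_0^T|\Delta X_s|^2\,\d s\Big]+2\tau(\lambda_1+\lambda_2)\,\sE\int_0^T|\Delta\hat{\a}_s|^2\,\d s\ \le\ \sE[\la\Delta\xi,\Delta Y_0\ra]+\Xi,
\]
with $\Xi$ linear in the input differences. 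Since this alone controls neither $\Delta Y$ nor $\Delta Z$ in the strong norms, I would combine it with the standard forward SDE estimate $\|\Delta X\|_{\cS^2}\le C(\|\Delta\xi\|_{L^2}+\|\Delta Y\|_{\cH^2}+\|\Delta I^b\|_{\cH^2}+\|\Delta I^\sigma\|_{\cH^2})$ and the standard backward BSDE estimate $\|\Delta Y\|_{\cS^2}+\|\Delta Z\|_{\cH^2}\le C(\|\Delta X_T\|_{L^2}+\|\Delta X\|_{\cH^2}+\|\Delta\zeta\|_{L^2}+\|\Delta I^f\|_{\cH^2})$, both of which follow from the Lipschitz bounds of Proposition \ref{prop:mcfE1} once $\cW_2$ between the flows of marginals is dominated by $L^2$-distances of the underlying processes via synchronous couplings. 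Interlacing these three estimates as in \cite{bensoussan2015, carmona2015, guo2023reinforcement} (and, when $T$ is large, iterating over sub-intervals of $[0,T]$) should give, with $C$ independent of $\tau$,
\[
\|\Delta X\|_{\cS^2}+\|\Delta Y\|_{\cS^2}+\|\Delta Z\|_{\cH^2}\ \le\ C\big(\|\Delta\xi\|_{L^2}+\|\Delta\zeta\|_{L^2}+\|\Delta I^b\|_{\cH^2}+\|\Delta I^\sigma\|_{\cH^2}+\|\Delta I^f\|_{\cH^2}\big).
\]

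With this in hand, the continuation step is routine: there should be $\delta_0>0$, depending only on the constants in Propositions \ref{prop:mcfE1}--\ref{prop:mcfE2} and on $T$, such that $\tau\in\cI$ implies $[\tau,\tau+\delta_0]\cap[0,1]\subset\cI$. Indeed, for fixed data the $(\tau+\delta)$-system is solved by a Picard map on $\cS^2\t\cS^2\t\cH^2$ that sends $(X^{(m)},Y^{(m)},Z^{(m)})$ to the unique solution of the $\tau$-system whose inputs are augmented by $\delta$ times the increments $\hat b(\cdot,X^{(m)},Y^{(m)},\cdot)$, $\sigma(\cdot,X^{(m)},\cdot)$, $\hat f(\cdot,X^{(m)},Y^{(m)},Z^{(m)},\cdot)-X^{(m)}$ and $\hat g(X^{(m)}_T,\cdot)-X^{(m)}_T$; the a priori estimate applied to the difference of two iterates shows this map is a contraction once $\delta\le\delta_0$, uniformly in $\tau$, its fixed point solves the $(\tau+\delta)$-system, and uniqueness follows from the estimate with zero data. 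Since $0\in\cI$ and $\delta_0$ is uniform, $\cI=[0,1]$, giving existence and uniqueness for \eqref{eq:mfc_fbsde2}. For the stability bound I would apply the a priori estimate at $\tau=1$ with vanishing inputs and $\zeta\equiv0$ and two initial conditions $\xi,\xi'$, obtaining $\|Y^{t,\xi}_t-Y^{t,\xi'}_t\|_{L^2}\le C\|\xi-\xi'\|_{L^2}$; taking $\xi'\equiv0$, bounding the solution started from $0$ through the coefficient bounds of Proposition \ref{prop:mcfE1} together with the usual SDE/BSDE moment estimates, and using the triangle inequality, then yields $\|X^{t,\xi}\|_{\cS^2}+\|Y^{t,\xi}\|_{\cS^2}+\|Z^{t,\xi}\|_{\cH^2}\le C(1+\|\xi\|_{L^2})$.

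The step I expect to be the main obstacle is the $\tau$-uniform a priori estimate. The monotonicity of Proposition \ref{prop:mcfE2} is on its own ``blind'' to $\Delta Y$ and $\Delta Z$, and the weight $(1-\tau)$ multiplying the one coercive quantity it does produce degenerates exactly in the regime $\tau\to1$ that matters; so the delicate point is to close the estimate by carefully balancing it against the forward and backward stability estimates while keeping every constant independent of $\tau$. The mean-field dependence then adds the pervasive, if essentially routine, task of controlling $\cW_2$ between flows of marginals by $L^2$-distances of the underlying processes throughout.
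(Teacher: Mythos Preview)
Your overall strategy---method of continuation plus a $\tau$-uniform a priori estimate obtained by combining It\^o's formula on $\la\Delta X,\Delta Y\ra$ with forward and backward stability estimates---is exactly the paper's route, and your treatment of the contraction step, the stability bound, and the moment bound is essentially what the paper does. Two points, however, deserve comment.

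First, the extra $(1-\tau)X_t$ and $(1-\tau)X_T$ terms in your interpolated system are not needed here. The paper's interpolation \eqref{eq:moc} simply scales $(\hat b,\sigma,\hat f,\hat g)$ by $\lambda$ and adds free inputs; at $\lambda=0$ the system is already decoupled, and no artificial coercivity has to be injected. The reason is the second point below.

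Second---and this is the actual gap in what you wrote---your ``standard forward SDE estimate'' $\|\Delta X\|_{\cS^2}\le C(\|\Delta\xi\|_{L^2}+\|\Delta Y\|_{\cH^2}+\cdots)$ is too crude to close the loop uniformly in $\tau$, precisely for the reason you flag: the monotonicity inequality controls $\|\Delta\hat\a\|_{\cH^2}$, not $\|\Delta Y\|_{\cH^2}$, and near $\tau=1$ the $(1-\tau)$-weighted term vanishes. The paper resolves this by exploiting the structure of $\hat b$ in \eqref{eq:mfc_coefficients2}: the forward coefficient depends on $(y,\chi)$ \emph{only through} $\hat\a(t,x,y,\chi)$ (and the pushforward $\phi(t,\chi)$, which again is built from $\hat\a$), so that after Gronwall one gets the sharp bound
\[
\|\Delta X\|_{\cS^2}^2\ \le\ C\Big(\lambda_0\int_0^T\|\Delta\hat\a_s\|_{L^2}^2\,\d s+\|\Delta\xi\|_{L^2}^2+\textnormal{input terms}\Big).
\]
This is exactly what the monotonicity estimate \eqref{eq:monotonicity} feeds, and then the standard BSDE estimate $\|\Delta Y\|_{\cS^2}+\|\Delta Z\|_{\cH^2}\le C(\|\Delta X\|_{\cS^2}+\cdots)$ closes the loop with constants independent of $\lambda_0$. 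Once you replace your forward estimate by this sharper one, the argument goes through without the $(1-\tau)X$ terms and without any sub-interval iteration.
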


{ 
 We now give our result concerning the H\"{o}lder regularity of the solutions to \eqref{eq:mfc_fbsde2}.}
\begin{Theorem}\label{TH:fbsde_Regularity}
{Suppose (H.\ref{assum:mfcE}) and (H.\ref{assum:mfcE_hat}(\ref{item:ex})) hold,
and let $(X,Y,Z)\in \cS^2(\sR^n)\t\cS^2(\sR^n)\t \cH^2(\sR^{n\t d})$ be the unique solution to \eqref{eq:mfc_fbsde2} with initial condition $X_0 = \xi_0\in L^2(\cF_0;\sR^n)$.
Moreover, 
for all  $p\ge 2$,
there exists
a constant $C>0$, depending only on $p$ and the data in 
(H.\ref{assum:mfcE}) and (H.\ref{assum:mfcE_hat}(\ref{item:ex})),
such that
$
\|X\|_{\cS^p}+\|Y\|_{\cS^p}+ \|Z\|_{\cS^p}
\le 
 C
\big(1+
\|\xi_0\|_{L^{p}}
\big)$
and 
$ \sE\left[\sup_{s\le r\le t}|X_r-X_s|^p\right]^{1/p}
 +\sE\left[\sup_{s\le r\le t}|Y_r-Y_s|^p\right]^{1/p}
 \le
C
(1+\|\xi_0\|_{L^{p}})
|t-s|^{{1}/{2}}
$ 
for all $0\le s\le t\le T$.}
\end{Theorem}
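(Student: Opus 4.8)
The plan is to combine the well-posedness and stability estimates from Proposition \ref{prop:fbsde_wellposedness} with standard moment bounds for McKean--Vlasov FBSDEs, and then upgrade these to the path-regularity statement by extending the path-regularity arguments for decoupled FBSDEs in \cite{zhang2017} to the present mean-field coupled setting. First I would establish the $\cS^p$ bounds. By Proposition \ref{prop:fbsde_wellposedness} we have the Lipschitz-in-law stability of the decoupling field, i.e. the map $\xi\mapsto Y^{0,\xi}_0$ is Lipschitz, so writing $u(t,\mu)$ for the decoupling field one gets $|u(t,\mu)-u(t,\mu')|\le C\cW_2(\mu,\mu')$ and $Y_t=u(t,\sP_{X_t})(X_t)$ along the solution. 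Given this, $(X,Y,Z)$ solves a \emph{decoupled} MV-FBSDE: $X$ solves a McKean--Vlasov SDE with coefficients $x\mapsto \hat b(t,x,u(t,\sP_{X_t})(x),\phi(t,\ldots))$ and $\sigma(t,x,\sP_{X_t})$, which are Lipschitz with linear growth in $x$ by Proposition \ref{prop:mcfE1} and the Lipschitz property of $u$; hence standard $L^p$ estimates for McKean--Vlasov SDEs give $\|X\|_{\cS^p}\le C(1+\|\xi_0\|_{L^p})$ for any $p\ge 2$, using $\xi_0\in L^p$. The bound on $\|Y\|_{\cS^p}$ follows from the corresponding $L^p$ estimates for the McKean--Vlasov BSDE \eqref{eq:mfc_bwd_fb2}, whose driver $\hat f$ and terminal datum $\hat g$ are Lipschitz with linear growth (Proposition \ref{prop:mcfE1}), together with the already-established $\cS^p$ bound on $X$; the $\cS^p$ bound on $Z$ (note: $\cS^p$, not merely $\cH^p$) follows from the representation $Z_t=\p_x u(t,\sP_{X_t})(X_t)\sigma(t,X_t,\sP_{X_t})$ once one knows the decoupling field is Lipschitz in its spatial argument, or alternatively from the $L^p$-bounds for the martingale part combined with the Burkholder--Davis--Gundy and Doob inequalities applied to the BSDE.

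Next I would prove the $1/2$-H\"older bound for $X$. This is the easy half: from the SDE \eqref{eq:mfc_fwd_fb2}, for $0\le s\le t\le T$,
\begin{align*}
X_t-X_s=\int_s^t \hat b(r,X_r,Y_r,\sP_{(X_r,Y_r)})\,\d r+\int_s^t\sigma(r,X_r,\sP_{X_r})\,\d W_r,
\end{align*}
so by the BDG and H\"older inequalities together with the linear growth of $\hat b$ and $\sigma$ and the already-proved $\cS^p$ bounds on $X$ and $Y$, one gets $\sE[\sup_{s\le r\le t}|X_r-X_s|^p]^{1/p}\le C(1+\|\xi_0\|_{L^p})|t-s|^{1/2}$. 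The H\"older bound for $Y$ is the substantive part. Here I would follow the scheme of \cite{zhang2017}: write $Y_t=u(t,\sP_{X_t})(X_t)$ with $u$ the decoupling field, and decompose $Y_t-Y_s$ into a spatial increment $u(t,\sP_{X_t})(X_t)-u(t,\sP_{X_t})(X_s)$, a measure increment $u(t,\sP_{X_t})(X_s)-u(t,\sP_{X_s})(X_s)$, and a time increment $u(t,\sP_{X_s})(X_s)-u(s,\sP_{X_s})(X_s)$. The first two are controlled by the Lipschitz continuity of $u$ in space and in the Wasserstein distance (from Proposition \ref{prop:fbsde_wellposedness}) composed with the $1/2$-H\"older bound for $X$ just obtained and for $t\mapsto\cW_2(\sP_{X_s},\sP_{X_t})\le \|X_t-X_s\|_{L^2}$. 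For the time increment of the decoupling field one uses the flow property: $u(t,\sP_{X_s})(X_s)=\sE[\,Y^{s,X_s}_t\mid \cF_s\,]$ (using the Markov/flow structure and that $Y$ has no jump), so $u(t,\sP_{X_s})(X_s)-u(s,\sP_{X_s})(X_s)=-\sE[\int_s^t \hat f(r,\Theta^{s,X_s}_r,\sP_{\Theta^{s,X_s}_r})\,\d r\mid\cF_s]$, whose $L^p$-norm is bounded by $C(1+\|\xi_0\|_{L^p})|t-s|$ (hence certainly $\le C|t-s|^{1/2}$) by the linear growth of $\hat f$ and the $\cS^p$ estimates. Combining the three pieces and taking $\sup_{s\le r\le t}$ via Doob's inequality on the martingale piece of $Y$ gives the claimed bound.

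The main obstacle I anticipate is the rigorous justification that the solution to \eqref{eq:mfc_fbsde2} is genuinely \emph{decoupled} through a Lipschitz decoupling field $u(t,\mu)(x)$ that is jointly Lipschitz in $x$ and Wasserstein-Lipschitz in $\mu$, \emph{uniformly in $t$}, and that $Z$ admits the desired $\cS^p$ (not just $\cH^p$) control --- Proposition \ref{prop:fbsde_wellposedness} gives Lipschitz dependence of $Y^{t,\xi}_t$ on $\xi$ in $L^2$, and one must bootstrap this to an $L^p$, pathwise, spatially-and-measure-Lipschitz statement. A clean way around needing the full regularity of $u$ is to avoid the decoupling field altogether and instead obtain the $Y$-increment bound directly from the BSDE by a stability-type argument: compare the solution on $[s,T]$ started from $X_s$ with its time-shift, estimate $\|Y_t-Y_s\|_{L^p}$ using the monotonicity condition of Proposition \ref{prop:mcfE2} (which already underlies the continuation-method proof of Proposition \ref{prop:fbsde_wellposedness}) together with the a priori bounds. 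I would carry the argument out this way if the decoupling-field regularity proves too delicate, invoking only what Propositions \ref{prop:mcfE1}, \ref{prop:mcfE2} and \ref{prop:fbsde_wellposedness} already supply, plus BDG, Doob, and Gronwall. The extension of \cite{zhang2017}'s path-regularity to the mean-field case is mostly bookkeeping once the a priori $\cS^p$ bounds and the Lipschitz-in-law stability are in hand, since all extra terms coming from the measure arguments are handled by the $1/2$-H\"older continuity of $t\mapsto\sP_{X_t}$ in $\cW_2$, which itself reduces to the $X$-increment estimate.
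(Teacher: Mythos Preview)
Your overall strategy matches the paper's: reduce \eqref{eq:mfc_fbsde2} to a decoupled FBSDE via a Lipschitz decoupling field (the paper obtains $v:[0,T]\times\sR^n\to\sR^n$ with $Y_t=v(t,X_t)$, $|v(t,x)-v(t,x')|\le C|x-x'|$ and $|v(t,0)|\le C(1+\|\xi_0\|_{L^2})$ by adapting \cite[Proposition 5.7]{carmona2015}), then get $\|X\|_{\cS^p}$ by SDE moment estimates and the $X$-H\"older bound by BDG, exactly as you describe. The substantive difference, and the place where your proposal is incomplete, is the $\cS^p$ control on $Z$. The paper's key step is to apply \cite[Theorem~5.2.2(i)]{zhang2017} to the \emph{decoupled} FBSDE (with frozen laws $\sP_{X_t},\sP_{(X_t,Y_t)},\sP_{(X_t,Y_t,Z_t)}$), which uses the Malliavin representation of $Z$ to conclude the \emph{pointwise} bound $|Z_t|\le C|\sigma(t,X_t,\sP_{X_t})|$ $\d\sP\otimes\d t$-a.e.; this needs only the Lipschitz property of the decoupling field, not differentiability. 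From this, $\|Z\|_{\cS^p}\le C(1+\|\xi_0\|_{L^p})$ is immediate, and your two proposed alternatives are either too strong a hypothesis (writing $Z_t=\p_x u\cdot\sigma$ requires classical differentiability of $u$) or too weak a conclusion (BDG and Doob on the BSDE yield only $\|Z\|_{\cH^p}$, not $\|Z\|_{\cS^p}$).

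Once $\|Z\|_{\cS^p}$ is in hand, the $Y$-H\"older bound follows \emph{directly} from the BSDE, with no further use of the decoupling field: from $Y_r-Y_s=\int_s^r\hat f\,\d u-\int_s^rZ_u\,\d W_u$, the drift gives $C(1+\|(X,Y,Z)\|_{\cS^p})|t-s|^{1/2}$ by linear growth and H\"older, and BDG controls the martingale term by $\sE[(\int_s^t|Z_r|^2\,\d r)^{p/2}]\le\|Z\|_{\cS^p}^p|t-s|^{p/2}$. Your three-piece decomposition of $Y_t-Y_s$ via $u(t,\mu)(x)$ is therefore a detour, and the time-increment identity you rely on is in fact false: $u(t,\sP_{X_s})(X_s)$ is the $Y$-value at time $t$ for the FBSDE \emph{restarted on $[t,T]$ with initial law $\sP_{X_s}$}, which is neither $Y_t$ nor $\sE[Y_t\mid\cF_s]$, since along the original flow the law at time $t$ is $\sP_{X_t}$, not $\sP_{X_s}$. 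Your fallback via monotonicity would still require an $\cS^p$-type control of $Z$ to handle the martingale increment of $Y$, so it does not circumvent the issue.
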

\begin{proof}
Let $\xi_0\in L^2(\cF_0;\sR^n)$ be a given initial condition and 
$(X,Y,Z)\in \cS^2(\sR^n)\t\cS^2(\sR^n)\t \cH^2(\sR^{n\t d})$
be the solution to \eqref{eq:mfc_fbsde2}.
By using the pathwise uniqueness and  the Lipschitz stability  of \eqref{eq:mfc_fbsde2}
in Proposition \ref{prop:fbsde_wellposedness},
we can follow the arguments in \cite[Proposition 5.7]{carmona2015}
and deduce that
there exists  a measurable function $v:[0,T]\t \sR^n\to \sR^n$ (depending on $\xi_0$) 
and a constant $C>0$ (independent of $\xi_0$)
such that 
$\sP(\fa t\in [0,T], Y_t=v(t,X_t))=1$
and it holds for all $t\in [0,T]$ and $x,x'\in \sR^n$ that $|v(t,x)-v(t,x')|\le C|x-x'|$ and $|v(t,0)|\le C(1+\|\xi_0\|_{L^2})$.

By substituting the  relation $Y_t=v(t,X_t)$ into \eqref{eq:mfc_fwd_fb2}, 
we can rewrite \eqref{eq:mfc_fbsde2} into  the following decoupled FBSDE:
\begin{subequations}\label{eq:decoupled_fbsde}
\begin{align}
\d X_t&=\bar{b}(t,X_t)\,\d t +\bar{\sigma} (t,X_t)\, \d W_t, 
\q X_0=\xi_0,
\label{eq:decoupled_sde}
\\
\d Y_t&=-\bar{f}(t,X_t,Y_t,Z_t)\,\d t+Z_t\,\d W_t,
\q
 Y_T=\bar{g}(X_T).
\end{align}
\end{subequations}
with coefficients $\bar{b}, \bar{\sigma}, \bar{f}$ and $\bar{g}$ defined as follows:
\begin{alignat*}{2}
\bar{b}(t,x)&\coloneqq 
\hat{b}(t,x,v(t,x),\sP_{(X_t,Y_t)}),
&&
\q \bar{\sigma}(t,x)\coloneqq \sigma (t,x,\sP_{X_t}),
\\
\bar{f}(t,x,y,z)&\coloneqq
\hat{f}(t,x,y,z, \sP_{(X_t,Y_t,Z_t)}),
&&\q 
\bar{g}(x)\coloneqq \hat{g}(x,\sP_{X_T}).
\end{alignat*}
By Propositions \ref{prop:mcfE1} and   \ref{prop:fbsde_wellposedness},
 these coefficients are $C$-Lipschitz continuous in the state variable
 with a constant $C$ independent of  $\xi_0$,
and satisfy the estimates $\int_0^T|\bar{b}(t,0)|^2\,\d t< \infty$, $\sup_{t\in [0,T]}|\bar{\sigma}(t,0)|< \infty$
 and $\int_0^T|\bar{f}(t,0,0,0)|^2\, \d t<\infty$.
Hence, by applying  \cite[Theorem 5.2.2 (i)]{zhang2017} to \eqref{eq:decoupled_fbsde},
we see there exists a constant $C>0$
such that $|Z_t|\le C|\sigma(t,X_t,\sP_{X_t})|$ $\d \sP\otimes \d t$-a.e.. We remark that in \cite{zhang2017} 
the initial state $\xi_0$ is assumed to be deterministic
and 
the coefficients of the FBSDE are assumed to be H\"{o}lder continuous in time. However, the proof 
relies on expressing the process $Z$ in terms of the Malliavin derivatives  of    $X$ and $Y$,
and hence can be extended to the present setting 
where  $\xi_0$ is $\cF_0$-measurable
and the coefficients are   measurable in time
and satisfy the above estimates.

By  the Lipschitz continuity of $v$, the estimate $\sup_{t\in[0,T]}|v(t,0)|\le C(1+\|\xi_0\|_{L^2})$
and  standard moment estimates of \eqref{eq:decoupled_sde},  $\|X\|_{\cS^p}\le C_{(p)}(1+\|\xi_0\|_{L^p})$,
which
along with the relations $Y_t=v(t,X_t)$ and $|Z_t|\le C|\sigma(t,X_t,\sP_{X_t})|$
leads to $\|Y\|_{\cS^p}+\|Z\|_{\cS^p}\le C_{(p)}(1+\|\xi_0\|_{L^p})$.
Moreover,
by \eqref{eq:mfc_fbsde2}, 
H\"{o}lder's inequality
and the  Burkholder-Davis-Gundy  inequality,
the process $X$ satisfies 
for each $p\ge 2$, $t,s\in [0,T]$,
\begin{align*}
&\sE\left[\sup_{s\le r\le t}|X_r-X_s|^p\right]
\\
&\le \sE\bigg[ \bigg(\int_s^t|\hat{b}(r,X_r,Y_r,\sP_{(X_r,Y_r)})|\, \d r \bigg)^p\bigg] 
+\sE\bigg[\sup_{s\le r\le t}\bigg|\int_s^r \sigma(u,X_u,\sP_{X_u})\,\d W_u\bigg|^p\bigg]
\\
&\le
C_{(p)}\bigg\{
(\|\hat{b}(\cdot,0,0,\bm{\delta}_{{0}_{n+n}})\|^p_{L^2(0,T)}+\|(X,Y)\|^p_{\cS^p})|t-s|^{\frac{p}{2}}
+\sE\bigg[\bigg(\int_s^t |\sigma(r,X_r,\sP_{X_r})|^2\,\d r\bigg)^{\frac{p}{2}}\bigg]
\bigg\}
\\
&\le
C_{(p)}\bigg\{
(\|\hat{b}(\cdot,0,0,\bm{\delta}_{{0}_{n+n}})\|^p_{L^2(0,T)}
+\|\sigma(\cdot,0,\bm{\delta}_{{0}_{n}})\|^p_{L^\infty(0,T)}
+\|(X,Y)\|^p_{\cS^p})|t-s|^{\frac{p}{2}}
\bigg\},
\end{align*}
and the process $Y$ satisfies 
for each $p\ge 2$, $t,s\in [0,T]$,
\begin{align*}
&\sE\left[\sup_{s\le r\le t}|Y_r-Y_s|^p\right]
\\
&\le \sE\bigg[ \bigg(\int_s^t|\hat{f}(r,X_r,Y_r,Z_r,\sP_{(X_r,Y_r,Z_r)})|\, \d r \bigg)^p\bigg] 
+\sE\bigg[\sup_{s\le r\le t}\bigg|\int_s^r Z_u\,\d W_u\bigg|^p\bigg]
\\
&\le
C_{(p)}\bigg\{
(\|\hat{f}(\cdot,0,0,0,\bm{\delta}_{{0}_{n+n+nd}})\|^p_{L^2(0,T)}+\|(X,Y,Z)\|^p_{\cS^p})|t-s|^{\frac{p}{2}}
+\sE\bigg[\bigg(\int_s^t|Z_r|^2\,\d r\bigg)^{\frac{p}{2}}\bigg]
\bigg\},
\end{align*}
which together with  Proposition \ref{prop:mcfE1},
 the inequality that 
$\sE[(\int_s^t|Z_r|^2\,\d r)^{\frac{p}{2}}]
\le \|Z\|^p_{\cS^p} (t-s)^{\frac{p}{2}}$ and
 the estimate of $\|(X,Y,Z)\|_{\cS^p}$
 leads to 
the desired H\"{o}lder continuity of the processes $X$ and $Y$. 
\end{proof}

We are now ready to prove Theorem \ref{TH:ControlRegularity} 
based on   Theorem 
\ref{TH:fbsde_Regularity}.

\begin{proof}[Proof of Theorem \ref{TH:ControlRegularity}]
Let 
$\hat{\a}:[0,T]\t \sR^n \t \sR^n \t \cP_2(\sR^{n} \t \sR^{n}) \to \bA$ be
the function in (H.\ref{assum:mfcE_hat}(\ref{item:ex})).
Define
 for each $t\in [0,T]$  that 
$\hat{\a}_t=\hat{\a}(t,X_t,Y_t, \sP_{(X_t,Y_t)})$,
and write $\hat{\a}=(\hat{\a}_t)_{t\in [0,T]}$
with a slight abuse of notation.
The local boundedness 
and Lipschitz continuity 
of the function $\hat{\a}$ (see (H.\ref{assum:mfcE_hat}(\ref{item:ex}))) show that 
$\|\hat{\a}\|_{\cS^p}\le \|\hat{\a}(\cdot,0,0,\bm{\delta}_{0_{n + n}})\|_{{\cS^p}}+C(\|X\|_{\cS^p}+\|Y\|_{\cS^p})\le C(1+\|\xi_0\|_{L^p})$ for all $p\ge 2$. 
Then, the assumption that  $\xi_0\in L^2(\cF_0;\sR^n)$ and the definition of 
the function
$\hat{\a}$ in (H.\ref{assum:mfcE_hat}(\ref{item:ex}))
imply that the control $\hat{\a}$ is admissible (i.e.,~$\hat{\a}\in \cA$)
and
satisfies 
\eqref{eq:opti_re} (equivalently  \eqref{eq:opti}),
which shows that 
$\hat{\a}$ is an optimal control of \eqref{eq:mfcE}.
The uniqueness of optimal controls of \eqref{eq:mfcE} follows from the strong convexity of 
the cost functional $J:\cA\to \sR$, which will be shown in Lemma \ref{lemma:J}.

Finally, for any given $0\le s\le r\le t\le T$, by (H.\ref{assum:mfcE_hat}),
\begin{align*}
&|\hat{\a}_r-\hat{\a}_s|=|\hat{\a}(r,X_r,Y_r,\sP_{(X_r,Y_r)})-\hat{\a}(s,X_s,Y_s,\sP_{(X_s,Y_s)})|
\\
&\le 
C\big\{(1+
|X_r|+|Y_r|+\|\sP_{(X_r,Y_r)}\|_2)|r-s|^{\frac{1}{2}}
+
|X_r-X_s|
\\
&\q +|Y_r-Y_s|+\cW_2(\sP_{(X_r,Y_r)},\sP_{(X_s,Y_s)})
\big\},
\end{align*}
which together with the moment estimates and regularity of the processes $X,Y$ leads to 
\begin{align*}
&\sE\left[\sup_{s\le r\le t}|\hat{\a}_r-\hat{\a}_s|^p\right]^{\frac{1}{p}}
\\
&\le 
C\bigg((1+
\|X\|_{\cS^p}+\|Y\|_{\cS^p})|t-s|^{\frac{1}{2}}
+
\sE\left[\sup_{s\le r\le t}|X_r-X_s|^p\right]^{\frac{1}{p}}
 +\sE\left[\sup_{s\le r\le t}|Y_r-Y_s|^p\right]^{\frac{1}{p}}
\bigg)
\\
&\le 
C(1+\|\xi_0\|_{L^p})|t-s|^{\frac{1}{2}}.
\end{align*}
This completes the proof of Theorem \ref{TH:ControlRegularity}.
\end{proof}

\subsection{Proofs of Theorems 
\ref{thm:PCPT_discrete_value}
and \ref{thm:control_PCPT_discrete}}\l{sec:conv_PCPT}

\begin{proof}[Proof of Theorem \ref{thm:PCPT_discrete_value}]
Throughout this proof,
let $\xi_0\in L^2(\cF_0;\sR^n)$ be a given initial state,
let  $\pi=\{0=t_0<\cdots<t_N=T\}$ be a given
 partition of $[0,T]$
with  stepsize $|\pi|=\max_{i=0,\ldots, N-1}(t_{i+1}-t_i)$,
let $\cA_{\pi}\subset \cA$ be the associated piecewise constant controls
and  
let $C$ be a generic constant, 
which is independent of   $\xi_0$,   $\pi$
and controls $\a\in \cA$, and may take a different value at each occurrence.

\textbf{Step 1: Estimate an upper bound of $V_\pi(\xi_0)-V(\xi_0)$.}
Let  $\hat{\a}\in \cA$ be an optimal control of \eqref{eq:mfcE}
satisfying 
that
$ \|\hat{\a}\|_{\cS^2}^2
 \le
C
(1+\|\xi_0\|^2_{L^{2}})$
and 
$ \|\hat{\a}_t-\hat{\a}_s\|_{L^2}^2
 \le
C
(1+\|\xi_0\|^2_{L^{2}})
|t-s|$
for all $ s,t\in [0,T]$,
and 
${X}^{\hat{\a}}$ be the solution to \eqref{eq:mfcE_fwd}  with the control ${\hat{\a}}$
satisfying
 $ \|X^{\hat{\a}}\|_{\cS^2}^2
 \le
C
(1+\|\xi_0\|^2_{L^{2}})$
and 
$
\|X^{\hat{\a}}_t-X^{\hat{\a}}_s\|_{L^2}^2
  \le
C
(1+\|\xi_0\|^2_{L^{2}})
|t-s|$
for all
$ s,t\in [0,T]$
(see Theorem \ref{TH:ControlRegularity}).
Define 
 the   piecewise constant approximation  $\hat{\a}^{\pi}$
of the process $\hat{\a}$ on $\pi$
such that  
$\hat{\a}^\pi_{t}=\sum_{i=0}^{N-1}\hat{\a}_{t_i} \bm{1}_{[t_i,t_{i+1})}(t)$ for all $t\in [0,T)$.
Then  $\hat{\a}^\pi\in \cA_\pi$ and it holds for all $t\in [0,T)$ that 
$t\in [t_i,t_{i+1})$ for some $i\in \{0,\ldots, N-1\}$ and 
$$
\|\hat{\a}^\pi_{t}-\hat{\a}_{t}\|_{L^2}=\|\hat{\a}_{t_i}-\hat{\a}_{t}\|_{L^2}
\le C(1+\|\xi_0\|_{L^{2}})|\pi|^{{1}/{2}}.
$$
Let
$\hat{X}^\pi$ be the solution to \eqref{eq:mfcE_fwd_euler}  with the control $\hat{\a}^\pi$.
By   the Lipschitz continuity of  $(b,\sigma)$
and Gronwall's inequality, 
$$ 
\max_{t_i\in \pi}\|\hat{X}^{\pi}_{t_i}\|_{L^2}^2
 \le
C
\big(1+\|\xi_0\|^2_{L^{2}}+\max_{t_i\in \pi}\|\hat{\a}^\pi_{t_i}\|^2_{L^2}\big)
\le 
C(1+\|\xi_0\|^2_{L^{2}}).
$$

For each $i\in \{0,\ldots,N-1\}$,
\begin{align}\l{eq:X^hat_a-X^pi}
\begin{split}
&\sE[|X^{\hat{\a}}_{t_{i+1}}-\hat{X}^{\pi}_{t_{i+1}}|^2]
\\
&
\le C\bigg\{
\sE\bigg[\bigg|\sum_{j=0}^{ i}\int_{t_j}^{t_{j+1}}
\big(b(t,X^{\hat{\a}}_t,\hat{\a}_t,\sP_{(X^{\hat{\a}}_t,\hat{\a}_t)})-
b(t_j,\hat{X}^{\pi}_{t_j},\hat{\a}^\pi_{t_j},\sP_{(\hat{X}^{\pi}_{t_j},\hat{\a}^\pi_{t_j})})\big)
\,\d t\bigg|^2\bigg]
\\
&\q 
+\sE\bigg[\sum_{j=0}^{ i}\int_{t_j}^{t_{j+1}}
|\sigma(t,X^{\hat{\a}}_t,\sP_{X^{\hat{\a}}_t})-
\sigma(t_j,\hat{X}^{\pi}_{t_j},\sP_{\hat{X}^{\pi}_{t_j}})|^2
\,\d t\bigg]
\bigg\}
\\
&\le
 C\bigg\{
R_1+
\sE\bigg[\bigg(
\sum_{j=0}^{ i}\int_{t_j}^{t_{j+1}}
\big|b({t_j},X^{\hat{\a}}_{t_j},\hat{\a}_{t_j},\sP_{(X^{\hat{\a}}_{t_j},\hat{\a}_{t_j})})-
b(t_j,\hat{X}^{\pi}_{t_j},\hat{\a}^\pi_{t_j},\sP_{(\hat{X}^{\pi}_{t_j},\hat{\a}^\pi_{t_j})})\big|
\,\d t\bigg)^2\bigg]
\\
&\q 
+\sE\bigg[\sum_{j=0}^{ i}\int_{t_j}^{t_{j+1}}
|\sigma(t_j,X^{\hat{\a}}_{t_j},\sP_{X^{\hat{\a}}_{t_j}})-
\sigma(t_j,\hat{X}^{\pi}_{t_j},\sP_{\hat{X}^{\pi}_{t_j}})|^2
\,\d t\bigg]
\bigg\},
\end{split}
\end{align}
with the residual term $R_1$ defined as 
\begin{align}\l{eq:R1}
\begin{split}
R_1
&\coloneqq
 \sE\bigg[\bigg(\sum_{i=0}^{N-1}\int_{t_i}^{t_{i+1}}
\big|b(t,X^{\hat{\a}}_t,\hat{\a}_t,\sP_{(X^{\hat{\a}}_t,\hat{\a}_t)})-
b(t_i,X^{\hat{\a}}_{t_i},\hat{\a}_{t_i},\sP_{(X^{\hat{\a}}_{t_i},\hat{\a}_{t_i})})\big|
\,\d t\bigg)^2\bigg]
\\
&\q 
+\sE\bigg[\sum_{i=0}^{ N-1}\int_{t_i}^{t_{i+1}}
|\sigma(t,X^{\hat{\a}}_t,\sP_{X^{\hat{\a}}_t})-
\sigma(t_i,X^{\hat{\a}}_{t_i},\sP_{X^{\hat{\a}}_{t_i}})|^2
\,\d t\bigg]
\\
&\le
 T
 \sE\bigg[\sum_{i=0}^{N-1}\int_{t_i}^{t_{i+1}}
\big|b(t,X^{\hat{\a}}_t,\hat{\a}_t,\sP_{(X^{\hat{\a}}_t,\hat{\a}_t)})-
b(t_i,X^{\hat{\a}}_{t_i},\hat{\a}_{t_i},\sP_{(X^{\hat{\a}}_{t_i},\hat{\a}_{t_i})})\big|^2
\,\d t\bigg]
\\
&\q 
+\sE\bigg[\sum_{i=0}^{ N-1}\int_{t_i}^{t_{i+1}}
|\sigma(t,X^{\hat{\a}}_t,\sP_{X^{\hat{\a}}_t})-
\sigma(t_i,X^{\hat{\a}}_{t_i},\sP_{X^{\hat{\a}}_{t_i}})|^2
\,\d t\bigg],
\end{split}
\end{align}
where the last inequality used   the Cauchy-Schwarz inequality.
Hence, by applying the Lipschitz continuity of $b$ and $\sigma$ and Gronwall's inequality, 
\begin{align}\l{eq:X_hat-X_a,pi}
\begin{split}
\max_{t_i\in \pi}\sE[|X^{\hat{\a}}_{t_i}-\hat{X}^{\pi}_{t_i}|^2]
&\le
 C\Big(
R_1+\max_{t_i\in \pi}\|\hat{\a}_{t_i}-\hat{\a}^\pi_{t_i}\|_{L^2}^2
\Big),
\end{split}
\end{align}
which, together with  the definition of $V_\pi(\xi_0)$ and 
the optimality of 
$\hat{\a}$  for \eqref{eq:mfcE},
gives that
\begin{align*}
&V_\pi(\xi_0)-V(\xi_0)
\le J_\pi(\hat{\a}^\pi;\xi_0)-J(\hat{\a};\xi_0)
\\
&\le\sE\bigg[\sum_{i=0}^{N-1}
\int_{t_i}^{t_{i+1}}
\big|
 f(t_i,\hat{X}^{\pi}_{t_i},\hat{\a}^\pi_{t_i},\sP_{(\hat{X}^{\pi}_{t_i},\hat{\a}^\pi_{t_i})})
 -f(t,X^{\hat{\a}}_t,\hat{\a}_t,\sP_{(X^{\hat{\a}}_t,\hat{\a}_t)})
 \big|\,\d t
\\
&\q +
|g(\hat{X}^{\pi}_T,\sP_{\hat{X}^{\pi}_T})-g(X^{\hat{\a}}_T,\sP_{X^{\hat{\a}}_T})|
\bigg]
\\
&\le
C\bigg(R_2+\sE\bigg[\sum_{i=0}^{N-1}
\int_{t_i}^{t_{i+1}}
\big|
f(t_i,\hat{X}^{\pi}_{t_i},\hat{\a}^\pi_{t_i},\sP_{(\hat{X}^{\pi}_{t_i},\hat{\a}^\pi_{t_i})})
-f({t_i},X^{\hat{\a}}_{t_i},\hat{\a}_{t_i},\sP_{(X^{\hat{\a}}_{t_i},\hat{\a}_{t_i})})
 \big|\,\d t
\\
& \q +
  |g(\hat{X}^{\pi}_T,\sP_{\hat{X}^{\pi}_T})-g(X^{\hat{\a}}_T,\sP_{X^{\hat{\a}}_T})|
\bigg]
\bigg)
\end{align*}
with the residual term defined by 
\begin{align}\l{eq:R2}
\begin{split}
R_2
&\coloneqq
\sE\bigg[\sum_{i=0}^{N-1}
\int_{t_i}^{t_{i+1}}
\big|
 f(t,X^{\hat{\a}}_t,\hat{\a}_t,\sP_{(X^{\hat{\a}}_t,\hat{\a}_t)})
 -f({t_i},X^{\hat{\a}}_{t_i},\hat{\a}_{t_i},\sP_{(X^{\hat{\a}}_{t_i},\hat{\a}_{t_i})})
 \big|\,\d t
\bigg].
\end{split}
\end{align}
Then, by using  \eqref{eq:f_local_lipschitz},
 the Cauchy-Schwarz inequality,
\eqref{eq:X_hat-X_a,pi}
and  the fact that $\|\hat{\a}^\pi_{t}-\hat{\a}_{t}\|_{L^2}\le C(1+\|\xi_0\|_{L^{2}})|\pi|^{{1}/{2}}$, 
\begin{align}\l{eq:V_pi-V}
\begin{split}
&V_\pi(\xi_0)-V(\xi_0)
\\
&\le
C\bigg(R_2+
\max_{t_i\in \pi}\big(1+\|(X^{\hat{\a}}_{t_i},\hat{\a}_{t_i},\hat{X}^{\pi}_{t_i},\hat{\a}^\pi_{t_i})\|_{L^2}\big)
\big(\|X^{\hat{\a}}_{t_i}-\hat{X}^{\pi}_{t_i}\|_{L^2}+\|\hat{\a}_{t_i}-\hat{\a}^\pi_{t_i}\|_{L^2}\big)
\bigg)
\\
&\le
C\Big(R_2+
(1+\|\xi_0\|_{L^{2}})
\big(
R^{1/2}_1+(1+\|\xi_0\|_{L^{2}})|\pi|^{{1}/{2}}
\big)
\Big).
\end{split}
\end{align}
Hence it remains to estimate the residual terms $R_1$ and $R_2$ defined as in \eqref{eq:R1} and \eqref{eq:R2}, respectively.
Note that 
by 
  (H.\ref{assum:mfcE}(\ref{item:mfcE_lin})) and  (H.\ref{assum:mfc_Holder_t}),
for all $(x,a,\mu,\eta), (x',a',\mu',\eta')\in \sR^k\t \bA\t \cP_2(\sR^n)\t\cP_2(\sR^n\t \sR^k)$,
\begin{align}
\label{eq:local_holder_b_sigma}
\begin{split}
&|b(r,x,a,\eta)-b(s,x',a',\eta')|
\\
&\le C
\Big((1+|x|+|a|+\|\eta\|_2)|r-s|^{1/2}+|x-x'|+|a-a'|+\cW_2(\eta,\eta')\Big),
\\
&|\sigma(r,x,\mu)-\sigma(s,x',\mu')|
\le C\Big((1+|x|+\|\mu\|_2)|r-s|^{1/2}+|x-x'|+\cW_2(\mu,\mu')\Big).
\end{split}
\end{align}
This along with the H\"{o}lder regularity of $(X^{\hat{\a}},\hat{\a})$
implies that 
\begin{align*}
R_1
&\le
C
\bigg((1+\|X^{\hat{\a}}\|^2_{\cS^2}+\|\hat{\a}\|^2_{\cS^2})|\pi|+
\sup_{t_i\in \pi,r\in [t_i,t_{i+1})}
\Big(
\|X^{\hat{\a}}_r-X^{\hat{\a}}_{t_i}\|_{L^2}^2
+\|{\hat{\a}}_r-{\hat{\a}}_{t_i}\|_{L^2}^2
\Big)
\bigg)
\\
&\le
C(1+\|\xi_0\|^2_{L^2})|\pi|,
\end{align*}
while  \eqref{eq:f_local_lipschitz}
and 
(H.\ref{assum:mfc_Holder_t}) 
gives 
\begin{align*}
R_2
&\le
C\bigg[
(1+\|X^{\hat{\a}}\|^2_{\cS^2}+\|\hat{\a}\|^2_{\cS^2})|\pi|^{1/2}
\\
&\q 
+
(1+\|X^{\hat{\a}}\|_{\cS^2}+\|\hat{\a}\|_{\cS^2})
\sup_{t_i\in \pi,r\in [t_i,t_{i+1})}
\Big(
\|X^{\hat{\a}}_r-X^{\hat{\a}}_{t_i}\|_{L^2}
+\|{\hat{\a}}_r-{\hat{\a}}_{t_i}\|_{L^2}
\Big)
\bigg]
\\
&\le 
C(1+\|\xi_0\|^2_{L^2})|\pi|^{1/2}.
\end{align*}
These estimates and \eqref{eq:V_pi-V} yields 
the upper bound
  $V_\pi(\xi_0)-V(\xi_0)\le C(1+\|\xi_0\|^2_{L^2})|\pi|^{1/2}$.

\textbf{Step 2: Estimate an upper bound of $V(\xi_0)-V_\pi(\xi_0)$.}
Note that the additional compactness assumption of $\bA$ implies that there exists $C>0$ such that
$\|\a\|_{\cH^2}\le C$ for all $\a\in \cA_\pi$.
Then  standard moment estimates for MV-SDEs (see e.g.~\cite[Theorem 3.3]{reis2019})
shows that there exists $C>0$ such that for all $\a\in \cA_\pi$, the solution to 
\eqref{eq:mfcE_fwd} with the control $\a$ satisfies 
$\|X^\a\|_{\cS^2}\le C(1+\|\xi_0\|_{L^2})$.
Moreover, for any $0\le s\le r\le t\le T$, by  the  Burkholder-Davis-Gundy  inequality,
H\"{o}lder's inequality
and (H.\ref{assum:mfcE}(\ref{item:mfcE_lin})), 
\begin{align}\l{eq:X^a_Holder_discrete}
\begin{split}
&\sE\left[\sup_{s\le r\le t}|X^\a_r-X^\a_s|^2\right]
\\
&\le 2
\sE\bigg[\bigg(
\int_s^t
|b(u,X^\a_u,\a_u,\sP_{(X^\a_u,\a_u)})|^2\, \d u\bigg)(t-s)
+
\int_s^t
|\sigma(u,X^\a_u,\sP_{X^\a_u})|^2\, \d u
\bigg]
\\
&\le C
(\|b_0\|^2_{L^2(0,T)}+\|\sigma_0\|^2_{L^\infty(0,T)}+\|X^\a\|^2_{\cS^2}+\|\a\|^2_{\cH^2})(t-s)
\\
&\le C(1+\|\xi_0\|^2_{L^2})(t-s).
\end{split}
\end{align}
Similarly,
for each $\a\in \cA_\pi$, 
by   the Lipschitz continuity of  $b,\sigma$
and Gronwall's inequality,
\begin{equation}\label{eq:apriori_1}
\max_{t_i\in \pi}\|{X}^{\a,\pi}_{t_i}\|_{L^2}^2
 \le
C
\big(1+\|\xi_0\|^2_{L^{2}}+\max_{t_i\in \pi}\|{\a}_{t_i}\|^2_{L^2}\big)
\le 
C(1+\|\xi_0\|^2_{L^{2}}).
\end{equation}
Let $\a\in \cA_\pi$ be fixed,
and let $X^\a$ and ${X}^{\a,\pi}$ be the solution to \eqref{eq:mfcE_fwd} and 
\eqref{eq:mfcE_fwd_euler} with the control $\a$, respectively. 
Then by following similar arguments as those for \eqref{eq:X^hat_a-X^pi} and \eqref{eq:R1},
we have
for each
$i\in \{0,\ldots,N-1\}$ that
\begin{align*}
&\sE[|X^{{\a}}_{t_{i+1}}-{X}^{\a,\pi}_{t_{i+1}}|^2]
\\
&\le
 C\bigg\{
R_1^\a+
\sE\bigg[\bigg(
\sum_{j=0}^{ i}\int_{t_j}^{t_{j+1}}
\big|b({t_j},X^{{\a}}_{t_j},{\a}_{t_j},\sP_{(X^{{\a}}_{t_j},{\a}_{t_j})})-
b(t_j,{X}^{\a,\pi}_{t_j},{\a}_{t_j},\sP_{({X}^{\a,\pi}_{t_j},{\a}_{t_j})})\big|
\,\d t\bigg)^2\bigg]
\\
&\q 
+\sE\bigg[\sum_{j=0}^{ i}\int_{t_j}^{t_{j+1}}
|\sigma(t_j,X^{{\a}}_{t_j},\sP_{X^{{\a}}_{t_j}})-
\sigma(t_j,{X}^{\a,\pi}_{t_j},\sP_{{X}^{\a,\pi}_{t_j}})|^2
\,\d t\bigg]
\bigg\},
\end{align*}
with the residual term $R^\a_1$ defined as 
\begin{align}\l{eq:R1_a}
\begin{split}
R^\a_1
&\coloneqq
 \sE\bigg[\sum_{i=0}^{N-1}\int_{t_i}^{t_{i+1}}
\big|b(t,X^{{\a}}_t,{\a}_t,\sP_{(X^{{\a}}_t,{\a}_t)})-
b(t_i,X^{{\a}}_{t_i},{\a}_{t_i},\sP_{(X^{{\a}}_{t_i},{\a}_{t_i})})\big|^2
\,\d t\bigg]
\\
&\q 
+\sE\bigg[\sum_{i=0}^{ N-1}\int_{t_i}^{t_{i+1}}
|\sigma(t,X^{{\a}}_t,\sP_{X^{{\a}}_t})-
\sigma(t_i,X^{{\a}}_{t_i},\sP_{X^{{\a}}_{t_i}})|^2
\,\d t\bigg],
\end{split}
\end{align}
which, along with  the Lipschitz continuity of $b$ and $\sigma$ and Gronwall's inequality, gives that
\begin{align}\l{eq:X_a-X_a,pi_discrete_a}
\begin{split}
\max_{t_i\in \pi}\sE[|X^{{\a}}_{t_i}-{X}^{\a,\pi}_{t_i}|^2]
&\le
 CR^\a_1.
\end{split}
\end{align}
Hence, by the local Lipschitz continuiy \eqref{eq:f_local_lipschitz} of $f$,
H\"{o}lder's inequality,
the \textit{a priori} estimate for $\|X^\a\|_{\cS^2}$, and the estimates  \eqref{eq:apriori_1}
 and \eqref{eq:X_a-X_a,pi_discrete_a},  
 \begin{align}\l{eq:V-V_pi}
 \begin{split}
&
V(\xi_0)-V_\pi(\xi_0)\le \sup_{\a\in \cA_\pi}
\big|J({\a};\xi_0)-J_\pi({\a};\xi_0)\big|
\\
&\le
\sup_{\a\in \cA_\pi}
\sE\bigg[\sum_{i=0}^{N-1}
\int_{t_i}^{t_{i+1}}
\big|
f(t,X^{{\a}}_t,{\a}_t,\sP_{(X^{{\a}}_t,{\a}_t)})
- f(t_i,{X}^{\a,\pi}_{t_i},{\a}_{t_i},\sP_{({X}^{\a,\pi}_{t_i},{\a}_{t_i})})
 \big|\,\d t
\\
&\q +
|g(X^{{\a}}_T,\sP_{X^{{\a}}_T})-g({X}^{\a,\pi}_T,\sP_{{X}^{\a,\pi}_T})|
\bigg]
\\
&\le
C\sup_{\a\in \cA_\pi}\bigg(R^\a_2+\sE\bigg[\sum_{i=0}^{N-1}
\int_{t_i}^{t_{i+1}}
\big|
f({t_i},X^{{\a}}_{t_i},{\a}_{t_i},\sP_{(X^{{\a}}_{t_i},{\a}_{t_i})})
-f(t_i,{X}^{\a,\pi}_{t_i},{\a}_{t_i},\sP_{({X}^{\a,\pi}_{t_i},{\a}_{t_i})})
 \big|\,\d t
\\
& \q +
  |g(X^{{\a}}_T,\sP_{X^{{\a}}_T})-g({X}^{\a,\pi}_T,\sP_{{X}^{\a,\pi}_T})|
\bigg]
\bigg)
\\
&\le
C\sup_{\a\in \cA_\pi}\Big(R^\a_2+ (1+\|\xi_0\|_{L^2})
\max_{t_i\in \pi}\|X^{{\a}}_{t_i}-{X}^{\a,\pi}_{t_i}\|_{L^2}\Big)
\\
&
\le
C\sup_{\a\in \cA_\pi}\Big(R^\a_2+ (1+\|\xi_0\|_{L^2})(R^\a_1)^{1/2}\Big)
 \end{split}
\end{align}
with the residual term $R^\a_1$ defined as in \eqref{eq:R1_a} and 
 the residual term   $R^\a_2$ defined by:
\begin{align}\l{eq:R2_a}
\begin{split}
R_2^\a
&\coloneqq
\sE\bigg[\sum_{i=0}^{N-1}
\int_{t_i}^{t_{i+1}}
\big|
 f(t,X^{{\a}}_t,{\a}_t,\sP_{(X^{{\a}}_t,{\a}_t)})
 -f({t_i},X^{{\a}}_{t_i},{\a}_{t_i},\sP_{(X^{{\a}}_{t_i},{\a}_{t_i})})
 \big|\,\d t
\bigg].
\end{split}
\end{align}
Note that for each $\a\in \cA_\pi$, we have ${\a}_t={\a}_{t_i}$ for all $t\in [t_i,t_{i+1})$, $i\in \{0,\ldots,N-1\}$,
which 
together with 
(H.\ref{assum:mfc_Holder_t}),
the local Lipschitz continuity of coefficients 
\eqref{eq:f_local_lipschitz}  and \eqref{eq:local_holder_b_sigma},
and the estimate \eqref{eq:X^a_Holder_discrete}
implies that
\begin{align*}
R^\a_1
&\le
C
\Big((1+\|X^{{\a}}\|^2_{\cS^2}+\|{\a}\|^2_{\cH^2})|\pi|+
\sup_{t_i\in \pi,r\in [t_i,t_{i+1})}
\|X^{{\a}}_r-X^{{\a}}_{t_i}\|_{L^2}^2
\Big)
\\
&\le
C(1+\|\xi_0\|^2_{L^2})|\pi|,
\\
R^\a_2
&\le
C\Big(
(1+\|X^{{\a}}\|^2_{\cS^2}+\|{\a}\|^2_{\cH^2})|\pi|^{1/2}
\\
&\q 
+
(1+\|X^{{\a}}\|_{\cS^2}+\|{\a}\|_{\cH^2})
\sup_{t_i\in \pi,r\in [t_i,t_{i+1})}
\|X^{{\a}}_r-X^{{\a}}_{t_i}\|_{L^2}
\Big)
\\
&\le 
C(1+\|\xi_0\|^2_{L^2})|\pi|^{1/2}.
\end{align*}
These estimates lead to the desired upper bound
 $V(\xi_0)-V_\pi(\xi_0)\le C(1+\|\xi_0\|^2_{L^2})|\pi|^{1/2}$.
\end{proof}

To prove Theorem \ref{thm:control_PCPT_discrete}, we first show  the cost functional $J(\cdot;\xi_0):\cA\to \sR$
defined by \eqref{eq:mfcE}
is strongly convex and coercive. 

\begin{Lemma}\l{lemma:J}
Suppose (H.\ref{assum:mfcE})
and (H.\ref{assum:mfcE_hat})
 hold, and  $\xi_0\in L^2(\cF_0;\sR^n)$.
 The functional  
 let $J(\cdot;\xi_0):\cA\to \sR$   defined by  \eqref{eq:mfcE}  is continuous and satisfies for all $\a,\b\in \cA$ and $\tau\in [0,1]$,
 \begin{align*}
&\tau J(\a;\xi_0)+(1-\tau) J(\b;\xi_0)-J(\tau \a+(1-\tau)\b;\xi_0)\ge 
\tau(1-\tau)(\lambda_1+\lambda_2)
\|\a-\b\|^2_{\cH^2},
\end{align*}
where $\lambda_1, \lambda_2$ are the constants appearing in (H.\ref{assum:mfcE}(\ref{item:mfcE_convex})).
Moreover,   for all $\a\in \cA$, 
\bb\l{eq:coercive}
J(\hat{\a};\xi_0)-J(\a;\xi_0)\le -(\lambda_1+\lambda_2)\|\hat{\a}-{\a}\|_{\cH^2}^2,
\ee
where $\hat{\a}$ is the unique minimizer of \eqref{eq:mfcE} defined in  Theorem \ref{TH:ControlRegularity}.
\end{Lemma}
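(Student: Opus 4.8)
The plan is to deduce all three assertions from (a) $L^2$-stability of the controlled McKean--Vlasov dynamics \eqref{eq:mfcE_fwd}, (b) the affine structure of that dynamics, and (c) the pointwise convexity inequalities of (H.\ref{assum:mfcE}(\ref{item:mfcE_convex})).

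For continuity, I would first recall the standard moment bound $\|X^\a\|_{\cS^2}\le C(1+\|\xi_0\|_{L^2}+\|\a\|_{\cH^2})$ together with the stability estimate $\|X^\a-X^{\a'}\|_{\cS^2}\le C\|\a-\a'\|_{\cH^2}$, both following from the Lipschitz (in fact affine) form of $(b,\sigma)$ in (H.\ref{assum:mfcE}(\ref{item:mfcE_lin})). Plugging these into the local Lipschitz bound \eqref{eq:f_local_lipschitz} for $f$ and its analogue for $g$, and applying the Cauchy--Schwarz inequality in $(\omega,t)$, yields $|J(\a;\xi_0)-J(\a';\xi_0)|\le C(1+\|\xi_0\|_{L^2}+\|\a\|_{\cH^2}+\|\a'\|_{\cH^2})\|\a-\a'\|_{\cH^2}$, which gives local Lipschitz continuity, hence continuity, of $J(\cdot;\xi_0)$ on $\cA$.

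The crux is the strong convexity estimate. The key point is that, because $b$ and $\sigma$ are affine in $(x,a)$ and depend on the measure argument only through its first moment in an affine way, for $\a^\tau\coloneqq\tau\a+(1-\tau)\b$ the process $\tau X^\a+(1-\tau)X^\b$ solves \eqref{eq:mfcE_fwd} with control $\a^\tau$; by pathwise uniqueness $X^{\a^\tau}=\tau X^\a+(1-\tau)X^\b$, so, setting $\Theta^\gamma_t\coloneqq(X^\gamma_t,\gamma_t)$, one has $\Theta^{\a^\tau}_t=\tau\Theta^\a_t+(1-\tau)\Theta^\b_t$ for every $t$. Fixing $t$, I would then apply the convexity inequality in (H.\ref{assum:mfcE}(\ref{item:mfcE_convex})) at the interpolated point $(X^{\a^\tau}_t(\omega),\a^\tau_t(\omega),\sP_{\Theta^{\a^\tau}_t})$ tested against each of the two endpoints $(X^\a_t(\omega),\a_t(\omega),\sP_{\Theta^\a_t})$ and $(X^\b_t(\omega),\b_t(\omega),\sP_{\Theta^\b_t})$, taking the tilde-copies to be an independent copy of the corresponding pair so that the quadratic remainder $\tilde\sE[|\tilde\a_t-\tilde\a^\tau_t|^2]$ becomes the deterministic quantity $\|\a_t-\a^\tau_t\|_{L^2}^2$, and then integrate over $\omega$. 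The first-order terms assemble, via Fubini, into the Fréchet derivative at $\Theta^{\a^\tau}_t$ of the lifted map $\Theta\mapsto\sE[f(t,\Theta,\sP_\Theta)]$. Multiplying the two resulting inequalities by $\tau$ and $1-\tau$ and adding, this first-order term is paired against $\tau(\Theta^\a_t-\Theta^{\a^\tau}_t)+(1-\tau)(\Theta^\b_t-\Theta^{\a^\tau}_t)=0$ and drops out, while the quadratic terms combine, using $\|\a_t-\a^\tau_t\|_{L^2}^2=(1-\tau)^2\|\a_t-\b_t\|_{L^2}^2$ and $\|\b_t-\a^\tau_t\|_{L^2}^2=\tau^2\|\a_t-\b_t\|_{L^2}^2$, to $\tau(1-\tau)(\lambda_1+\lambda_2)\|\a_t-\b_t\|_{L^2}^2$. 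Integrating over $t\in[0,T]$ and adding the same argument for the terminal cost (where the convexity inequality for $g$ has no quadratic lower bound, i.e.\ $\lambda_1=\lambda_2=0$) yields exactly the stated strong convexity of $J(\cdot;\xi_0)$.

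Given the strong convexity, \eqref{eq:coercive} is a one-line consequence: $\hat\a$ is the unique minimizer of $J(\cdot;\xi_0)$ over $\cA$ by Theorem \ref{TH:ControlRegularity}, and for $\tau\in(0,1)$ the control $\a^\tau\coloneqq\tau\a+(1-\tau)\hat\a$ again lies in $\cA$ since $\bA$ is convex, so $J(\a^\tau;\xi_0)\ge J(\hat\a;\xi_0)$; inserting this into the strong convexity inequality applied to $\a$ and $\hat\a$, dividing by $\tau$, and letting $\tau\downarrow0$ gives $J(\a;\xi_0)-J(\hat\a;\xi_0)\ge(\lambda_1+\lambda_2)\|\a-\hat\a\|_{\cH^2}^2$. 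The main technical care lies in the interpolation step of the convexity argument: one must track the L-derivative bookkeeping carefully, checking that the tilde-copy conventions are consistent and that the finite-dimensional partial derivatives $\p_{(x,a)}f$ together with the measure derivatives $\p_{(\mu,\nu)}f$, paired against the $\tilde\Theta^{\a^\tau}_t$-increments, genuinely combine into a single Fréchet gradient at $\Theta^{\a^\tau}_t$, so that the convex combination annihilates it. Everything else is routine.
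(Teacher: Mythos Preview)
Your proposal is correct. For continuity and for the strong convexity inequality, your argument coincides with the paper's: both use the affine structure of $(b,\sigma)$ in (H.\ref{assum:mfcE}(\ref{item:mfcE_lin})) to obtain $X^{\a^\tau}=\tau X^\a+(1-\tau)X^\b$, apply the convexity inequalities of (H.\ref{assum:mfcE}(\ref{item:mfcE_convex})) at the interpolated point against each endpoint, and combine with weights $\tau,1-\tau$ so that the first-order (Fr\'echet-gradient) term is paired against zero and only the quadratic remainder survives.

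Where you diverge is in the derivation of \eqref{eq:coercive}. You extract it as a formal consequence of the strong convexity inequality plus the bare fact that $\hat\a$ is a minimizer: insert $J(\a^\tau;\xi_0)\ge J(\hat\a;\xi_0)$, divide by $\tau$, and let $\tau\downarrow 0$. The paper instead proves \eqref{eq:coercive} from scratch, revisiting the sufficient-condition computation of the maximum principle in \cite[Theorem~3.5]{acciaio2019}: it expresses $J(\hat\a;\xi_0)-J(\a;\xi_0)$ through differences of the Hamiltonian $H$, applies the convexity inequality \eqref{eq:mfcE_H_convex} for $H$, and then uses the pointwise first-order optimality condition \eqref{eq:opti} to discard the remaining linear term. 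Your route is shorter and uses only the statement of Theorem~\ref{TH:ControlRegularity} that $\hat\a$ is optimal; the paper's route is independent of the already-established strong convexity inequality but requires the full first-order characterization of $\hat\a$. Both are valid.
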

\begin{proof}
The continuity of $J$ follows directly from  stability results of \eqref{eq:mfcE} and the local Lipschitz continuity of functions $(f,g)$.

We now show the strong convexity of the cost functional $J$.  
Let $\a,\b\in \cA$, $\tau\in [0,1]$, 
and let $X^\a$ (resp.~$X^\b$) be the solution to \eqref{eq:mfcE} with control $\a$ (resp.~$\b$).
Let $\gamma=\tau\a+(1-\tau)\b$
and
let $X\coloneqq\tau X^\a+(1-\tau) X^\b$.
We first show $X=X^\gamma$,
where $X^\gamma$  be the solution to \eqref{eq:mfcE} with control $\gamma$.
 It is clear that $X_0=\tau X^\a_0+(1-\tau) X^\b_0=\xi_0=X^\gamma_0$.
For  each $t\in[0,T]$,
we see 
 that 
 $\sE[(X_t,\gamma_t)]=\tau \sE[(X^\a_t,\a_t)]+(1-\tau)\sE[(X^\b_t,\b_t)]$,
 which together with the linearity of the functions $b,\sigma$  in $(x,a,\eta)$ (see (H.\ref{assum:mfcE}(\ref{item:mfcE_lin})))
gives that
\begin{align*}
 b(t,X_t,\gamma_t,\sP_{(X_t,\gamma_t)})
& =
 \tau b(t,X^\a_t,\a_t,\sP_{(X^\a_t,\a_t)})
+(1-\tau)b(t,X^\b_t,\b_t,\sP_{(X^\b_t,\b_t)}),
\\
 \sigma(t,X_t,\sP_{X_t})&=
 \tau \sigma(t,X^\a_t,\sP_{X^\a_t})
+(1-\tau)\sigma(t,X^\b_t,\sP_{X^\b_t}).
\end{align*}
Hence, we can show by using   It\^{o}'s formula that
$X$ satisfies the same MV-SDE as $X^\gamma$, 
which along with the uniqueness of strong solutions
shows that
$X^\gamma=X=\tau X^\a+(1-\tau) X^\b$. 

Let $\tilde{X}^\a_T$ and  $\tilde{X}^\b_T$ be independent copies of 
${X}^\a_T$ and  ${X}^\b_T$, respectively, defined on $L^2(\tilde{\Om},\tilde{\cF},\tilde{\sP};\sR^n)$.
We see that $\tilde{X}^\gamma_T\coloneqq \tau \tilde{X}^\a_T+(1-\tau) \tilde{X}^\b_T$ is an independent copy of $X^\gamma_T$ with distribution $\sP_{X^\gamma_T}$.
Hence
we can obtain from the convexity of $g$ in  (H.\ref{assum:mfcE}(\ref{item:mfcE_convex}))
and $X^\gamma=\tau X^\a+(1-\tau) X^\b$
 that
\begin{align*}
&g(X^\a_T,\sP_{X^\a_T})-g(X^\gamma_T,\sP_{X^\gamma_T})
\\
&\ge \la \p_{x}g(X^\gamma_T,\sP_{X^\gamma_T}), X^\a_T-X^\gamma_T \ra
+\tilde{\sE}[\la\p_\mu g(X^\gamma_T,\mu)(\tilde{X}^\gamma_T),\tilde{X}^\a_T-\tilde{X}^\gamma_T\ra ]
\\
&=(1-\tau)
\Big(
\la \p_{x}g(X^\gamma_T,\sP_{X^\gamma_T}), X^\a_T-X^\b_T \ra
+\tilde{\sE}[\la\p_\mu g(X^\gamma_T,\mu)(\tilde{X}^\gamma_T),\tilde{X}^\a_T-\tilde{X}^\b_T\ra ]
\Big).
\end{align*}
Similarly, we can show that 
\begin{align*}
&g(X^\b_T,\sP_{X^\b_T})-g(X^\gamma_T,\sP_{X^\gamma_T})
\\
&\ge 
\tau
\Big(
\la \p_{x}g(X^\gamma_T,\sP_{X^\gamma_T}), X^\b_T-X^\a_T \ra
+\tilde{\sE}[\la\p_\mu g(X^\gamma_T,\mu)(\tilde{X}^\gamma_T),\tilde{X}^\b_T-\tilde{X}^\a_T\ra ]
\Big),
\end{align*}
which implies that 
$$
\tau \sE[g(X^\a_T,\sP_{X^\a_T})]+(1-\tau)\sE[g(X^\b_T,\sP_{X^\b_T})]
\ge
\sE[g(X^\gamma_T,\sP_{X^\gamma_T})].
$$
Now 
for each $t\in [0,T]$, let 
$(\tilde{X}^\a_t,\tilde{\a}_t)$ and  $(\tilde{X}^\b_t,\tilde{\b}_t)$ be independent copies of 
$({X}^\a_t,\a_t)$ and  $({X}^\b_t,\b_t)$ defined on $L^2(\tilde{\Om},\tilde{\cF},\tilde{\sP};\sR^n\t \sR^k)$, respectively.
We see that $(\tilde{X}^\gamma_t,\tilde{\gamma}_t)\coloneqq \tau (\tilde{X}^\a_t,\tilde{\a}_t)+(1-\tau) (\tilde{X}^\b_t,\tilde{\b}_t)$ is an independent copy of $(X^\gamma_t,\gamma_t)$ with distribution $\sP_{(X^\gamma_t,\gamma_t)}$.
Then we can obtain from the convexity of $f$ in   (H.\ref{assum:mfcE}(\ref{item:mfcE_convex}))
and $(X^\gamma,\gamma)=\tau( X^\a,\a)+(1-\tau) (X^\b,\b)$ that 
\begin{align*}
&f(t,X^\a_t,\a_t,\sP_{(X^\a_t,\a_t)})-f(t,X^\gamma_t,\gamma_t,\sP_{(X^\gamma_t,\gamma_t)})
\\
&\ge 
(1-\tau)\Big(
\la \p_{(x,a)}f(t,X^\gamma_t,\gamma_t,\sP_{(X^\gamma_t,\gamma_t)}), (X^\a_t-X^\b_t,\a_t-\b_t)\ra 
\\
&\q 
+\tilde{\sE}[\la\p_\mu f(t,X^\gamma_t,\gamma_t,\sP_{(X^\gamma_t,\gamma_t)})(\tilde{X}^\gamma_t,\tilde{\gamma}_t),\tilde{X}^\a_t-\tilde{X}^\b_t\ra] 
\\
&\q
+ \tilde{\sE}[\la\p_\nu f(t,X^\gamma_t,\gamma_t,\sP_{(X^\gamma_t,\gamma_t)}),\tilde{\alpha}_t-\tilde{\b}_t\ra ]
\Big)
+(1-\tau)^2
\Big(\lambda_1|\a_t-\b_t|^2+\lambda_2\tilde{\sE}[|\tilde{\alpha}_t-\tilde{\b}_t|^2]\Big),
\end{align*}
Similarly, we can derive a lower bound of 
$f(t,X^\b_t,\b_t,\sP_{(X^\b_t,\b_t)})-f(t,X^\gamma_t,\gamma_t,\sP_{(X^\gamma_t,\gamma_t)})$,
which subsequently leads to the estimate that
\begin{align*}
&\tau\sE[f(t,X^\a_t,\a_t,\sP_{(X^\a_t,\a_t)})]+(1-\tau)\sE[f(t,X^\b_t,\b_t,\sP_{(X^\b_t,\b_t)})]
-\sE[f(t,X^\gamma_t,\gamma_t,\sP_{(X^\gamma_t,\gamma_t)})]
\\
&\ge 
\Big(\tau(1-\tau)^2+\tau^2(1-\tau)\Big)
\Big(\lambda_1\sE[|\a_t-\b_t|^2]+\lambda_2\tilde{\sE}[|\tilde{\alpha}_t-\tilde{\b}_t|^2]
\Big)
\\
&=\tau(1-\tau)(\lambda_1+\lambda_2)
\sE[|\a_t-\b_t|^2].
\end{align*}
Hence, we can conclude from   \eqref{eq:mfcE} the desired strong convexity estimate.

We proceed to show the estimate \eqref{eq:coercive}.
The linearity of $(b,\sigma)$
and  the convexity of $f$  
in (H.\ref{assum:mfcE}) imply 
that the Hamiltonian $H$ defined as in \eqref{eq:mfcE_hamiltonian} is convex, i.e., 
for all $(t,y,z)\in [0,T]\in \sR^n\t \sR^{n\t d}$, $(x,\eta,a),(x',\eta',a')\in \sR^n\t \cP_2(\sR^n \t \sR^{k})\t \bA$,
\begin{align}\l{eq:mfcE_H_convex}
\begin{split}
&H(t,x,a,\eta,y,z)-H(t,x',a',\eta',y,z)-\la \p_{(x,\a)}H(t,x,a,\eta,y,z), (x-x',a-a')\ra 
\\
&\q 
-\tilde{\sE}[\la\p_\mu H(t,x,a,\eta,y,z)(\tilde{X},\tilde{\a}),\tilde{X}-\tilde{X}'\ra  + \la\p_\nu H(t,x,a,\eta,y,z)(\tilde{X},\tilde{\a}),\tilde{\a}-\tilde{\a}'\ra ] \\
& \le -\lambda_1 |a'-a|^2 - \lambda_2 \tilde{\mathbb{E}}[|\tilde{\a}'-\tilde{\a}|^2],
\end{split}
\end{align}
whenever  $(\tilde{X},\tilde{\a}),(\tilde{X}',\tilde{\a}')\in L^2(\tilde{\Om},\tilde{\cF},\tilde{\sP};\sR^n\t\sR^k)$
with distributions $\eta$ and $\eta'$, respectively.
Moreover, 
 the same arguments as in 
\cite[Theorem 3.5]{acciaio2019} give us that 
\begin{align*}
J(\hat{\a};\xi_0)-J(\a;\xi_0) 
& \leq
\sE\bigg[\int_0^T
\Big(H(t,X^{\hat{\a}}_t,\hat{\alpha}_t,\sP_{(X^{\hat{\a}}_t,\hat{\alpha}_t)},Y^{\hat{\a}}_t,Z^{\hat{\a}}_t)
-H(\a_t, X^{{\a}}_t,{\alpha}_t,\sP_{(X^{{\a}}_t,{\alpha}_t)},Y^{\hat{\a}}_t,Z^{\hat{\a}}_t)\,\d t
\Big)
\bigg]
\\
&\quad 
-\sE\bigg[\int_0^T
\la
\p_x H(t,X^{\hat{\a}}_t,\hat{\alpha}_t,\sP_{(X^{\hat{\a}}_t,\hat{\alpha}_t)},Y^{\hat{\a}}_t,Z^{\hat{\a}}_t),
X^{\hat{\a}}_t-X^{{\a}}_t
\ra \,\d t
\bigg]
\\
&\quad 
-\sE\bigg[\int_0^T
\tilde{\sE}[\la
\p_\mu H(t,X^{\hat{\a}}_t,\hat{\alpha}_t,\sP_{(X^{\hat{\a}}_t,\hat{\alpha}_t)},Y^{\hat{\a}}_t,Z^{\hat{\a}}_t)
(\tilde{X}^{\hat{\a}}_t,\tilde{\hat{\alpha}}_t),
\tilde{X}^{\hat{\a}}_t-\tilde{X}^{{\a}}_t
\ra] \,\d t
\bigg],
\end{align*} 
which along with  \eqref{eq:mfcE_H_convex}  and 
the fact that $\hat{\alpha}$ satisfies  the optimality condition \eqref{eq:opti}
leads to 
\begin{align*}
J(\hat{\a};\xi_0)-J(\a;\xi_0) 
& \leq
\mathbb{E}
\bigg[
 \int_{0}^{T} \la \p_a H(\theta^{\hat{\a}}_t,Y^{\hat{\a}}_t,Z^{\hat{\a}}_t) + \tilde{\mathbb{E}} [\p_\nu  H(\tilde{\theta}^{\hat{\a}}_t,\tilde{Y}^{\hat{\a}}_t,\tilde{Z}^{\hat{\a}}_t)(X^{\hat{\a}}_t,\hat{\alpha}_t)], \hat{\alpha}_t-a_t \ra \, \d t
 \bigg]
\\
&\quad 
- (\lambda_1+\lambda_2)\|\hat{\a}-{\a}\|_{\cH^2}^2 
\\
& \leq -(\lambda_1+\lambda_2)\|\hat{\a}-{\a}\|_{\cH^2}^2.
\end{align*} 
This finishes the proof of the estimate  \eqref{eq:coercive}.
\end{proof}

\begin{proof}[Proof of Theorem \ref{thm:control_PCPT_discrete}]
Recall that 
Step 2 of the proof of Theorem \ref{thm:PCPT_discrete_value} (see \eqref{eq:V-V_pi})
proves that there exists a constant $C>0$, independent of $\xi_0$ and $\pi$, such that for all $\a\in \cA_{\pi}$,
$|J({\a};\xi_0)-J_{\pi}({\a};\xi_0)|\le C(1+\|\xi_0\|^2_{L^2})|\pi|^{1/2}$.
Hence, by  the estimates \eqref{eq:coercive}, for all $\a\in \cA_\pi$ with $J_{\pi}(\a;\xi_0)\le  V_{\pi}(\xi_0)+\eps$,
\begin{align*}
(\lambda_1+\lambda_2)\|\hat{\a}-{\a}\|_{\cH^2}^2 &\le J(\a;\xi_0)-J(\hat{\a};\xi_0)  - J_{\pi}(\a;\xi_0) + J_{\pi}(\a;\xi_0)  \\
& \leq  J(\a;\xi_0)-J(\hat{\a};\xi_0) - J_{\pi}(\a;\xi_0) + V_{\pi}(\xi_0)+\eps  \\
& \leq C(1+\|\xi_0\|^2_{L^{2}})|\pi|^{1/2} + V_{\pi}(\xi_0)-  J(\hat{\a};\xi_0) + \eps\\
& \leq C(1+\|\xi_0\|^2_{L^{2}})|\pi|^{1/2} + \eps,
\end{align*}
where  the last estimate follows from Theorem \ref{thm:PCPT_discrete_value}.
Taking the square root of both sides of the inequality yields the claim.
\end{proof}

\subsection{Proof of Theorem \ref{improvedorder}}\label{MFC1_general}

We start by showing   an approximate sub-dynamic programming principle that will be a key ingredient in the proof of  Theorem \ref{improvedorder}.

\begin{Lemma}\label{lemma1}
Suppose   
(H.\ref{assum:A1}) and (H.\ref{assum:B2}) hold. For all $s\in [0,T], \nu_s \in \mathcal{P}_2(\mathbb{R}^n\times \mathbb{R}^k)$, and $h>0$,
\begin{equation}\label{bound1}
- \big( (\Lx V_\pi^c)(s,\nu_s) +\mathbb{E}\big[f(s,X^{\alpha}_s,\alpha_s,\nu_s)\big] \big) \le Ch,
\end{equation}
where $(X^{\alpha}_s,\alpha_s) \sim \nu_s$, and 
$C>0$ is a    constant     depending   only on the constants in (H.\ref{assum:B2}).
\end{Lemma}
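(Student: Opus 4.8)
The plan is to prove \eqref{bound1} by combining a one-step feasibility inequality for the piecewise constant problem with two applications of It\^{o}'s formula along flows of probability measures (see Appendix \ref{sec:ito_formula}). Throughout, write $\mu_s\coloneqq\pi_1\sharp\nu_s$, $F(t,\nu)\coloneqq\int_{\sR^n\times\sR^k}f(t,x,a,\nu)\,\nu(\d x,\d a)$ and $H(t,\nu)\coloneqq(\Lx V_\pi^c)(t,\nu)+F(t,\nu)$, so that, once $(X^{\alpha}_s,\alpha_s)\sim\nu_s$, the quantity $-H(s,\nu_s)$ is exactly the left-hand side of \eqref{bound1}. The target is thus $H(s,\nu_s)\ge-Ch$.

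\emph{Step 1 (feasibility inequality).} Since $\cF_0$ is rich enough, choose $\xi\in L^2(\cF_s;\sR^n)$ with $\cL_\xi=\mu_s$ and an $\cF_s$-measurable, $\bA$-valued $\alpha_s$ with $\cL_{(\xi,\alpha_s)}=\nu_s$. For $\eps>0$, let $\alpha\in\cA_h$ be the control equal to the constant $\alpha_s$ on $[s,s+h)$ and equal to an $\eps$-optimal piecewise constant continuation on $[s+h,T]$ for the problem \eqref{eq:mfcE2} restarted from $X^{\alpha}_{s+h}$. This concatenation is admissible, and, splitting the cost at $s+h$ and using the law-invariance of $J(\,\cdot\,;s+h,\cdot)$ (\cite[Proposition 2.4]{djete2022mckean}) together with the definition of $V_\pi^c(s+h,\cdot)$, letting $\eps\downarrow0$ gives
\[
V_\pi^c(s,\mu_s)\le\int_s^{s+h}\sE\big[f(r,X^{\alpha}_r,\alpha_s,\sP_{(X^{\alpha}_r,\alpha_s)})\big]\,\d r+V_\pi^c\big(s+h,\sP_{X^{\alpha}_{s+h}}\big).
\]

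\emph{Step 2 (It\^{o} expansion of $V_\pi^c$, cancellation).} On $[s,s+h]$ the control is constant, so $\sP_{(X^{\alpha}_r,\alpha_r)}=\sP_{(X^{\alpha}_r,\alpha_s)}$ there. As $V_\pi^c\in C_2^{1,2}([0,T]\times\cP_2(\sR^n))$ by (H.\ref{assum:B2}(\ref{enum_ii})), It\^{o}'s formula applied to $r\mapsto V_\pi^c(r,\sP_{X^{\alpha}_r})$ on $[s,s+h]$, together with the definition \eqref{defLx} of $\Lx$, yields
\[
V_\pi^c\big(s+h,\sP_{X^{\alpha}_{s+h}}\big)=V_\pi^c(s,\mu_s)+\int_s^{s+h}(\Lx V_\pi^c)\big(r,\sP_{(X^{\alpha}_r,\alpha_s)}\big)\,\d r.
\]
Inserting this into the inequality of Step 1, interchanging expectation and the time integral to rewrite the running-cost term as $\int_s^{s+h}F(r,\sP_{(X^{\alpha}_r,\alpha_s)})\,\d r$, and cancelling the finite quantity $V_\pi^c(s,\mu_s)$ leaves $\int_s^{s+h}H\big(r,\sP_{(X^{\alpha}_r,\alpha_s)}\big)\,\d r\ge0$.

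\emph{Step 3 (It\^{o} expansion of $H$, conclusion).} By (H.\ref{assum:B2}), both $F$ and $(t,\nu)\mapsto(\Lx V_\pi^c)(t,\nu)$ lie in $C_2^{1,2}([0,T]\times\cP_2(\sR^n\times\sR^k))$, hence so does $H$. Regarding $(X^{\alpha}_r,\alpha_s)_{r\in[s,s+h]}$ as an It\^{o} process with degenerate second component (drift $(b,0_k)^\intercal$, diffusion $(\sigma,0_{k\times d})^\intercal$), It\^{o}'s formula applied to $r\mapsto H(r,\sP_{(X^{\alpha}_r,\alpha_s)})$ and the definition \eqref{defLxa} of $\Lxa$ give, for $r\in[s,s+h]$,
\[
H\big(r,\sP_{(X^{\alpha}_r,\alpha_s)}\big)-H(s,\nu_s)=\int_s^r(\Lxa H)\big(u,\sP_{(X^{\alpha}_u,\alpha_s)}\big)\,\d u.
\]
Since $\Lxa H=\Lxa\Lx V_\pi^c+\Lxa F$ with $|\Lxa\Lx V_\pi^c|\le C$ and $|\Lxa F|\le C$ by (H.\ref{assum:B2}(\ref{enum_ii})) and (H.\ref{assum:B2}(\ref{enum_i})), the increment is at most $2Ch$ in absolute value, so $H(r,\sP_{(X^{\alpha}_r,\alpha_s)})\le H(s,\nu_s)+2Ch$ on $[s,s+h]$. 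Combining this with $\int_s^{s+h}H(r,\sP_{(X^{\alpha}_r,\alpha_s)})\,\d r\ge0$ from Step 2 gives $0\le h\,H(s,\nu_s)+2Ch^2$, i.e.\ $H(s,\nu_s)\ge-2Ch$, which is \eqref{bound1}. The one genuinely delicate point is Step 1: making the feasibility inequality rigorous within the class $\cA_h$ — constructing $\alpha_s$ as an $\cF_s$-measurable action with prescribed joint law $\nu_s$, concatenating it with an $\eps$-optimal piecewise constant continuation so that the result stays in $\cA_h$, and verifying the additive splitting of the cost at $s+h$ (which relies on law-invariance of the cost functional); the remaining estimates are routine given the regularity in (H.\ref{assum:B2}).
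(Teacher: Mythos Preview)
Your proof is correct and follows essentially the same approach as the paper's: both establish the one-step sub-DPP inequality, apply It\^{o}'s formula once to $V_\pi^c$ to obtain $\int_s^{s+h}H(r,\nu_r)\,\d r\ge 0$ with $H=\Lx V_\pi^c+F$, and then apply It\^{o}'s formula a second time (to $H$, equivalently to $F$ and $\Lx V_\pi^c$ separately) to extract $h\,H(s,\nu_s)$ plus an $O(h^2)$ remainder. Your packaging of the two functions into the single $H$ and the use of an $\eps$-optimal continuation (rather than assuming a minimizer) are cosmetic differences; the underlying computation is the same double It\^{o} expansion as in the paper.
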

\begin{proof} 
For some $s \in [0,T)$, let $\alpha \in \mathcal{A}_h$ be a piecewise constant policy over $[s,T]$ such that $\alpha_r = \beta_r$ for $r \in [s,s+h)$ and $\alpha_r = \gamma_r$ for $r \in [s+h,T]$, where $(\beta_r)_{r \in [s, s+h)} \in \mathcal{A}_h$, i.e.\ constant, and $\beta_r \sim \pi_2\sharp \nu_r$, the second marginal of $\nu_r$, while we choose $\gamma$ such that it minimizes 
\begin{equation}\label{remark1}
\mathbb{E}\left[\int_{s+h}^Tf(r,X^{\alpha}_r,\alpha_r,\mathbb{P}_{(X^{\alpha}_r,\alpha_r)})\diff r + g(X^{\alpha}_T,\mathbb{P}_{X^{\alpha}_T}) \bigg| \mathcal{F}_{s+h}  \right].
\end{equation}
Notice that $\alpha$ is admissible over $[s,T]$ and the flow property holds: $X^{\alpha}_r = X^{\gamma,X^{\beta}_{s+h} }_r $ for all $r \in [s+h, T]$, where $X^{\gamma,X^{\beta}_{s+h} }_r$ follows the dynamics \eqref{stateprocessX} with initial condition $X^{\beta}_{s+h}$ at time $s+h$, as similarly argued, for example, in \cite[Proposition 6.29]{carmona2018a}. 

We will occasionally write for compactness $\mu_t = \mathbb{P}_{X^{\alpha}_t}$ and $\nu_t = \mathbb{P}_{(X^{\alpha}_t, \alpha_t)}$ for any $t\in [s,T]$.
Now, by definition,
\begin{equation}
\label{dppinequ}
\begin{split}
V_\pi^c(s,\mu_s) &\le \mathbb{E}\left[\int_s^{T}f(r,X^{\alpha}_r,\alpha_r,\mathbb{P}_{(X^{\alpha}_r,\alpha_r)})\diff r +g(X^{\alpha}_T,\mathbb{P}_{X^{\alpha}_T})  \right]\\
& = \mathbb{E}\left[\int_s^{s+h}f(r,X^{\alpha}_r,\alpha_r,\mathbb{P}_{(X^{\alpha}_r,\alpha_r)})\diff r + \mathbb{E}\left[\int_{s+h}^Tf(r,X^{\alpha}_r,\alpha_r,\mathbb{P}_{(X^{\alpha}_r,\alpha_r)})\diff r + g(X^{\alpha}_T,\mathbb{P}_{X^{\alpha}_T}) \big| \mathcal{F}_{s+h} \right]  \right]\\
& = \mathbb{E}\left[\int_s^{s+h}f(r,X^{\beta}_r,\beta_r, \nu_r)\diff r\right] + V_\pi^c(s+h,\mu_{s+h}),
\end{split}
\end{equation}
since for $[s+h, T]$ we choose $\alpha$ such that it minimizes the expression in \eqref{remark1}.  

Recalling that $F(r,\nu_r):= \mathbb{E}[f(r,X^{\alpha}_r,\alpha_r,\nu_r)]$, \eqref{dppinequ} can then be rewritten as
\begin{equation}\label{DPPinequality}
        V_\pi^c(s,\mu_s) \le \int_s^{s+h} F(r,\nu_r)\diff r+ V_\pi^c(s+h, \mu_{s+h}).
\end{equation}
Applying It\^{o}'s formula twice to $V_\pi^c$, we have 
\begin{equation}
    \begin{split}
V_\pi^c(s+h, \mu_{s+h}) &= V_\pi^c(s,\mu_s) + \int_s^{s+h} (\Lx V^c_{\pi})(r,\nu_r) \diff r \\
&=  V_\pi^c(s,\mu_s) + h \cdot  (\Lx V^c_{\pi})(s,\nu_s) + 
\int_s^{s+h}\int_s^r (\Lxa \Lx V^c_{\pi})(\tau,\nu_{\tau}) \diff \tau \diff r,
    \end{split}
\end{equation}
noting that in the first line $\Lx$ is applied to $V^c_{\pi}$, which is a function only of the marginal law $\mu_r = \pi_1\sharp \nu_r$ in the $x$-component, while $\Lx V^c_{\pi}$ is a general function of $\nu_r$ (and not solely of its marginal) by the dependence of $b$ and $\sigma$ on $\nu_r$ in \eqref{defLx};
for the second line, we applied  It{\^o}'s formula  to $(\Lx V^c_{\pi})(s,\nu_s)$, where $\nu_s$ is the joint law of the partly degenerate It{\^o} process $(X,\alpha)$ for which $\alpha$ is (random but) constant over $(s,s+h)$ by the assumption of piecewise constant controls.

Using this and applying It\^{o}'s formula once to $F$, we get by insertion in \eqref{dppinequ},
\begin{equation}
    \begin{split}
     V_\pi^c(s,\mu_s) \le & 
     \ V_\pi^c(s,\mu_s) + h \cdot \left( F(s,\nu_s)  +  (\Lx V^c_{\pi})(s,\nu_s) \right) \\ & + \int_s^{s+h}\int_s^r 
     \left((\Lxa F)(\tau,\nu_{\tau}) 
      + (\Lxa \Lx V^c_{\pi})(\tau,\nu_{\tau}) \right) \diff \tau \diff r.
    \end{split}
\end{equation}
By (H.\ref{assum:B2}), the terms in the second line are of order $O(h^2)$. Rearranging the terms yields the desired conclusion. 
\end{proof}

 \begin{proof}[Proof of Theorem \ref{improvedorder}]
Let $\alpha \in \mathcal{A}$ be an arbitrary control, $s\in [0,T],$ and $\mu_{.}=\mathbb{P}_{X^{\alpha}_{.}}$ the measure flow with $\mu_s \in \mathcal{P}_2(\mathbb{R}^n)$ for all $s$, and similarly  $\nu_s= \mathbb{P}_{(X^{\alpha}_{s}, \alpha_{s})} \in \mathcal{P}_2(\mathbb{R}^n \times \mathbb{R}^k)$. 
 Let $C>0$ be a generic constant that depends only on  the constants in (H.\ref{assum:A1}) and (H.\ref{assum:B2}), and    may vary throughout the analysis.

Applying It\^{o}'s formula to $V_\pi^c$ and using Lemma \ref{lemma1} yield
\begin{equation*}
\begin{split}
    V_\pi^c(T,\mu_{T}) &= V_\pi^c(s,\mu_s)+
    \int_s^{T} (\Lx V_\pi^c)(r,\nu_r) \diff r\\
    &\ge V_\pi^c(s,\mu_s)-\int_s^T\mathbb{E}\big[f(r,X^{\alpha}_r,\alpha_r,\nu_r)\big]\diff r-CTh,    
\end{split}
\end{equation*}
which implies that 
\begin{equation*}
\begin{split}
    J( \alpha; s, \xi) &= \mathbb{E}\Big[\int_s^T f(r,X^{\alpha}_r,\alpha_r,\nu_r)\diff r+g(X^{\alpha}_T,\mu_{T})\Big]\\
    &= \mathbb{E}\Big[\int_s^T f(r,X^{\alpha}_r,\alpha_r,\nu_r)\diff r\Big]+V_\pi^c(T,\mu_{T})\\
    &\ge \mathbb{E}\Big[\int_s^T f(r,X^{\alpha}_r,\alpha_r,\nu_r)\diff r\Big]-\mathbb{E}\Big[\int_s^Tf(r,X^{\alpha}_r,\alpha_r,\nu_r)\diff r\Big]+ V_\pi^c(s,\mu_s) -Ch.    
\end{split}
\end{equation*}
Since the above is true for any $\alpha \in \mathcal{A},$ taking the infimum over  $ {\alpha}\in \cA $ shows that for all $s\in [0,T]$,
\begin{equation*}
   V_\pi^c(s,\mu_s)-V(s,\mu_s)\le Ch,
\end{equation*}
which proves the upper bound of \eqref{mainresult1} for all $s \in [0,T]$ and $\mu_s \in \mathcal{P}_2(\mathbb{R}^n)$. The lower bound is a direct consequence of $\mathcal{A}_h \subseteq \mathcal{A},$ which concludes the proof.
\end{proof}
 
\section{Numerical experiments}
\label{numericalresults}

In this section, we illustrate the first-order convergence of piecewise constant control approximation using the controlled multi-dimensional stochastic Cucker--Smale (C--S) dynamics, as previously studied in, for instance, \cite{NouCaiMal, carmona2018a, ReiStoZha1}. 

\paragraph{Problem formulation.}
The C--S model was 
introduced  in \cite{CucSma} and describes  the self-organisation  behaviour of a group of agents, such as birds, that interact with each other through alignment and attraction forces. The state of each agent is determined by its position and velocity, and each agent evolves under similar dynamics. In its original uncontrolled formulation, see \cite{CucSma}, each agent updates its velocity based on the weighted velocities of all the other agents, where the weights depend on the distance between the agents. In the controlled   model, the controller aims to either bring consensus on a model that would diverge, or to accelerate the flocking process of an initial arrangement that would self-organise.

We fix a terminal time $T>0$ and consider a $d$-dimensional adapted control strategy $(\alpha_t)_{t\in[0,T]}$ and a $d$-dimensional Brownian motion $W$ on a fixed filtered probability space $(\Omega, \mathcal{F},\{\mathcal{F}_t\}_{t\in[0,T]},\mathbb{P})$. The state of each member evolves according to the following $2d$-dynamics with initial state $(x_0,v_0) \in L^2(\Omega;\mathbb{R}^d \times \mathbb{R}^d)$,
\begin{equation}\label{CS}
    \diff x_t = v_t\diff t, \quad \quad \diff v_t = \bigg(\alpha_t+ \iint_{\mathbb{R}^d \times \mathbb{R}^d} \kappa (x_t,v_t,x',v')\mathcal{L}_{(x_t,v_t)}(\diff x', \diff v')\bigg)\diff t + \sigma \diff W_t,
\end{equation}
for all $t\in [0,T]$, where $\sigma \in \mathbb{R}^{d \times d}$, and $\kappa:\mathbb{R}^{d} \times \mathbb{R}^{d} \times \mathbb{R}^{d} \times \mathbb{R}^{d} \to \mathbb{R}^{d}$ is defined by 
\begin{equation*}
  \kappa (x,v,x',v') = \frac{\Phi(v'-v)}{(1+|x'-x|^2)^{\beta}}, \text{ for some }\Phi,\beta \ge 0. 
\end{equation*}

The communication rate $\beta$ captures the rate of decay of the influence between agents that are positioned further apart. 
In this setting, the controller is assumed to minimize the cost functional $J$ over all adapted processes $\alpha \in \mathbb{R}^d$,
\begin{equation}\label{CSvaluefunction}
 J(\alpha;(x_0,v_0)) = \mathbb{E}\Big[\int_0^T \big( |v_t-\mathbb{E}[v_t]|^2+\gamma_1 |\alpha_t|^2\big)\diff t+|v_T-\mathbb{E}[v_T]|^2\Big],
\end{equation}
for given constant $\gamma_1 \ge 0.$ 

\paragraph{Experiment setup.}
 We now show numerical experiments for different $\beta \in \{0,1\}$, in which we fix $T=1$, $\Phi=1$, $\sigma=\gamma_1=0.1$, and the initial states $x_0$ and $v_0$ are uniformly distributed on the interval $[0,1)$. 

 We approximate the solution to the MFC problem by applying a policy gradient method to a neural network parametrisation
of the feedback map from the state $(x_t,v_t)$ to the control $\alpha_t$. The law of the process in the coefficients and objective is approximated by the empirical measure of a particle system. 

Specifically, 
using the same notation as in Algorithm 1 of \cite{carmona2019}, let $\mathbf{N}_{d_0,d_1,...,d_{l+1}}^{\psi}$ be the set of neural networks $\phi$ such that $\phi = \phi_l \circ \phi_{l-1} \circ ... \circ \phi_{0}$, with layer functions $\phi_{i}:\mathbb{R}^{i}\to \mathbb{R}^{i+1}$, $\phi_i(x)_k = \psi\big(\beta_k + \sum_{j=1}^{d_i}\omega_{k,j}x_j\big)$ for all $k\in \{1,...,i+1\}$, $\beta \in \mathbb{R}^{d_{i+1}},\omega \in \mathbb{R}^{d_{i+1}\times d_{i}}$, for $i \in \{0,...,l-1\}$ and an activation function $\psi:\mathbb{R}\to \mathbb{R}$. For the output layer, we use the identity $\psi(x)= x$ as the activation function. The network architecture consists of the number of hidden layers $l$, the number of units per layer, i.e., $d_0,d_1,...,d_{l+1}$, and the activation function $\psi$. Fixing the network architecture, the parameters of the network that we train are $\theta := (\beta_0,\omega_0,\beta_1,\omega_1,...,\beta_l,\omega_l)$. We denote this set by $\Theta$, so that $\theta \in \Theta.$ We also denote by $\phi_{\theta}$ the neural network $\phi$ computed using parameters $\theta$. At each time--step of the discretization, we keep the architecture of the network fixed. Specifically, we use $l=2$ hidden layers and set the dimensions of the hidden and output layers to $110$ and $d$ respectively. The dimension of the input layer is $d+1$ in the case when $\beta=0$, and $2d+1$ otherwise. To find the optimal weights of the network, we run the Adam algorithm \cite{KinBa}, with an initial learning rate equal to $0.001,$ and a scheduler that decreases the learning rate $\lambda$ by a factor of $0.617$ every $50$ epochs. At each time-step of the discretization, we approximate the joint law of the state and velocity processes by taking the empirical distribution of $N$ agents. 

Having parameterised the controls as neural networks, the optimization problem reduces to minimizing the following quantity
\begin{equation}\label{costnn}
\begin{split}
    J^{N}_{\phi}&=\frac{1}{N} \sum_{i=1}^{N}\bigg(\sum_{m=0}^{M-1}h \left(|v^i_{t_m}-\mathbb{E}^{\mu_{t_m}}[v_{t_m}]|^2+\gamma_1|\phi(t_m,(x^i_{t_m},v^i_{t_m}))|^2\right)+|v^i_{t_M}-\mathbb{E}^{\mu_{t_m}}[v_{t_M}]|^2\bigg),      
\end{split}
\end{equation}
over $\phi \in \mathbf{N}_{d_0,d_1,...,d_{l+1}}^{\psi}$, under the constraint
\begin{equation}\label{Eulerscheme}
\begin{split}
&x^{i}_{t_m+1} = x^{i}_{t_m} + hv^{i}_{t_m},\\
&v^{i}_{t_m+1} = v^{i}_{t_m}+h\bigg(\phi(t_m,(x^i_{t_m},v^i_{t_m}))+\frac{1}{N}\sum^{N}_{j=1} \frac{K(v^{j}_{t_m}-v^{i}_{t_m})}{(1+|x^{j}_{t_m}-x^{i}_{t_m}|^2)^{\beta}}\bigg)+\sigma\Delta W^i_m,       
\end{split}
\end{equation}
where $\mu_{t_m} = \frac{1}{N}\sum_{i=1}^{N}\delta_{v^{i}_{t_m}}, (x^{i}_{0})_{i \in \{1,...,N\}}$ and $(v^{i}_{0})_{i \in \{1,...,N\}}$ are i.i.d. with uniform distribution on $[0,1)$ and $(\Delta W^i_m)_{i,m}$ are i.i.d. random variables with distribution $\mathcal{N}(0,h)$, for all  $m \in \{0,...,M-1\}$ and $i\in \{1,...,N\}$. 

Here we summarize the policy gradient method for solving \eqref{CSvaluefunction}.

\begin{algorithm}
  \caption{Policy gradient method for controlled C-S models}
  \label{alg:PGalgorithm}
  \begin{algorithmic}[1]
  \STATE \textbf{Input:} Initialise weights $\theta_0 \in \Theta$,    the number of iterations $K$, 
  the number of agents  $N$,
  the number of timesteps  $M$. 
      \FOR{$k = 0,1,...,K-1$}
      \STATE Generate samples $\{x^i_{t_m},v^i_{t_m}\mid  1\le i\le N, 0\le  m \le M \}$ by \eqref{Eulerscheme} using policy $\phi_{\theta_k}$.
        \STATE Compute $\nabla J^{N}_{{\phi}_{\theta}}(\theta_k)$ for $J^{N}_{{\phi}_{\theta}}(\theta_k)$ given in \eqref{costnn}.
        \STATE Obtain $\theta_{k+1}$ with one Adam update.  
      \ENDFOR
      \STATE \textbf{return} $\theta_K$
  \end{algorithmic}
\end{algorithm} 

 \paragraph{Numerical results.}
We first set $\beta=0$ and $\gamma_1=0.1$, in the C-S dynamics \eqref{CS}--\eqref{CSvaluefunction}. The problem then becomes a $d$--dimensional Linear Quadratic (LQ) problem (as there is no $x$-dependence in the $v$ process) that can be solved explicitly via solving a Riccati ODE, see, e.g., \cite{JYong}, \cite{BasPha}, and references therein. 
The optimal feedback control can be found in \cite{ReiStoZha1}, Section 5.2.1, to be 
\begin{equation}\label{optimalcontrol}
    \alpha^*(t,x,v)= -\frac{\nu_t}{2\gamma_1}(v-\mathbb{E}[v^*_t]), \quad (t,x,v)\in [0,T]\times \mathbb{R}\times \mathbb{R},
\end{equation}
where $\nu:[0,T]\to \mathbb{R}$ solves the Riccati equation $\nu'_t -2\Phi \nu_t -\frac{1}{2\gamma_1}\nu^2_t+2=0,\, \nu_T=2,$ and $v^*$ the optimal velocity process. Fixing $N=10^3, d=1, M=128, K=800$, and using this explicit form of the optimal control, we compute the true value function to be $0.0236$ (to three correct digits). We use this as a benchmark and compare it with the one obtained when solving the problem numerically using Algorithm \ref{alg:PGalgorithm} above. In Figure \ref{fig:VF_convergence}, we observe that the approximated value function converges to $0.0239$ over the gradient iterations, close to the true solution. In this setting, we also illustrate the convergence in time of the discretized value function. Specifically, we measure the error in the approximation of the true solution when using Algorithm \ref{alg:PGalgorithm} above with different timesteps $\{4,8,16,32,64,128\}$. The log--log plot in Figure \ref{fig:time_convergence} shows a rate of order $1$ in time, which agrees with our findings in Theorem \ref{improvedorder}.

\begin{figure}[htbp]
  \centering
  \begin{minipage}[b]{0.45\textwidth} 
    \centering
    \includegraphics[width=\textwidth]{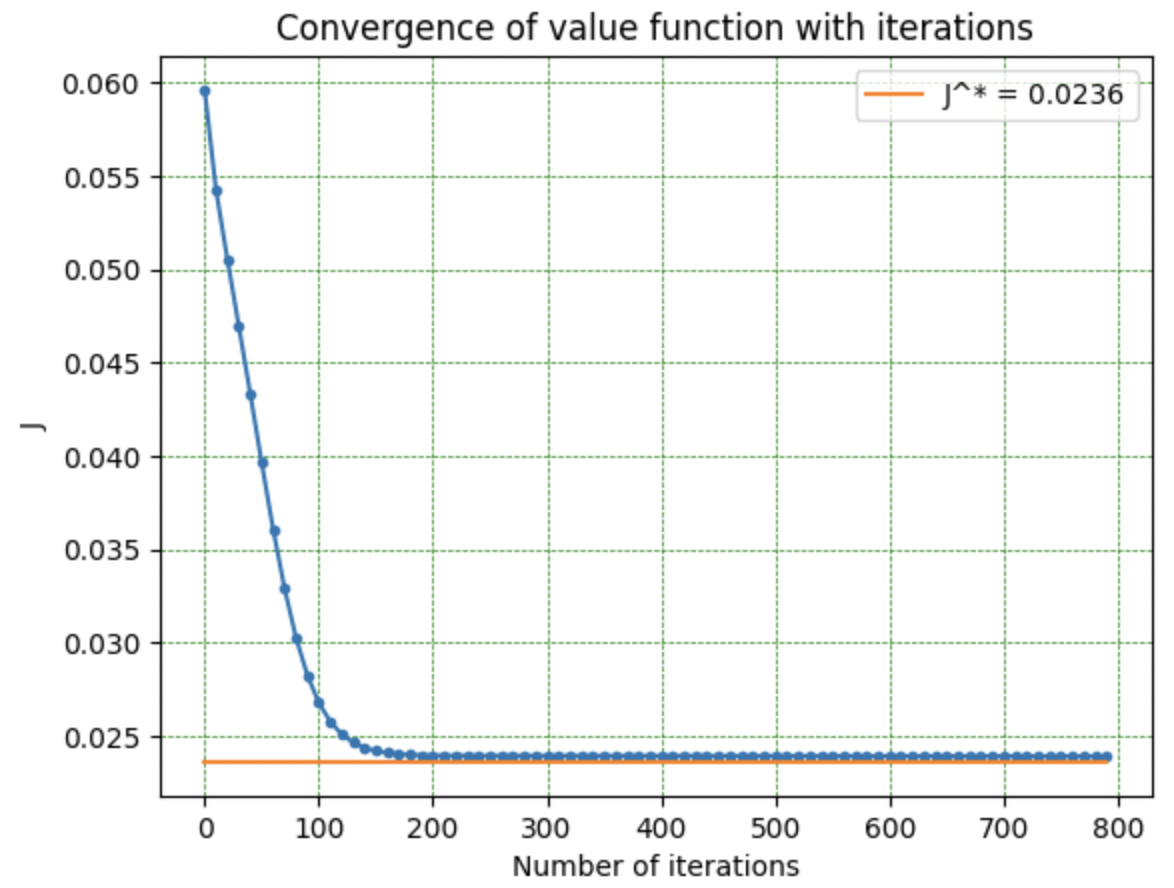}
    \caption{Value function convergence ($\beta=0,\gamma_1=0.1,d=1$)}
    \label{fig:VF_convergence}
  \end{minipage}
  \hfill
  \begin{minipage}[b]{0.45\textwidth} 
    \centering
    \includegraphics[width=\textwidth]{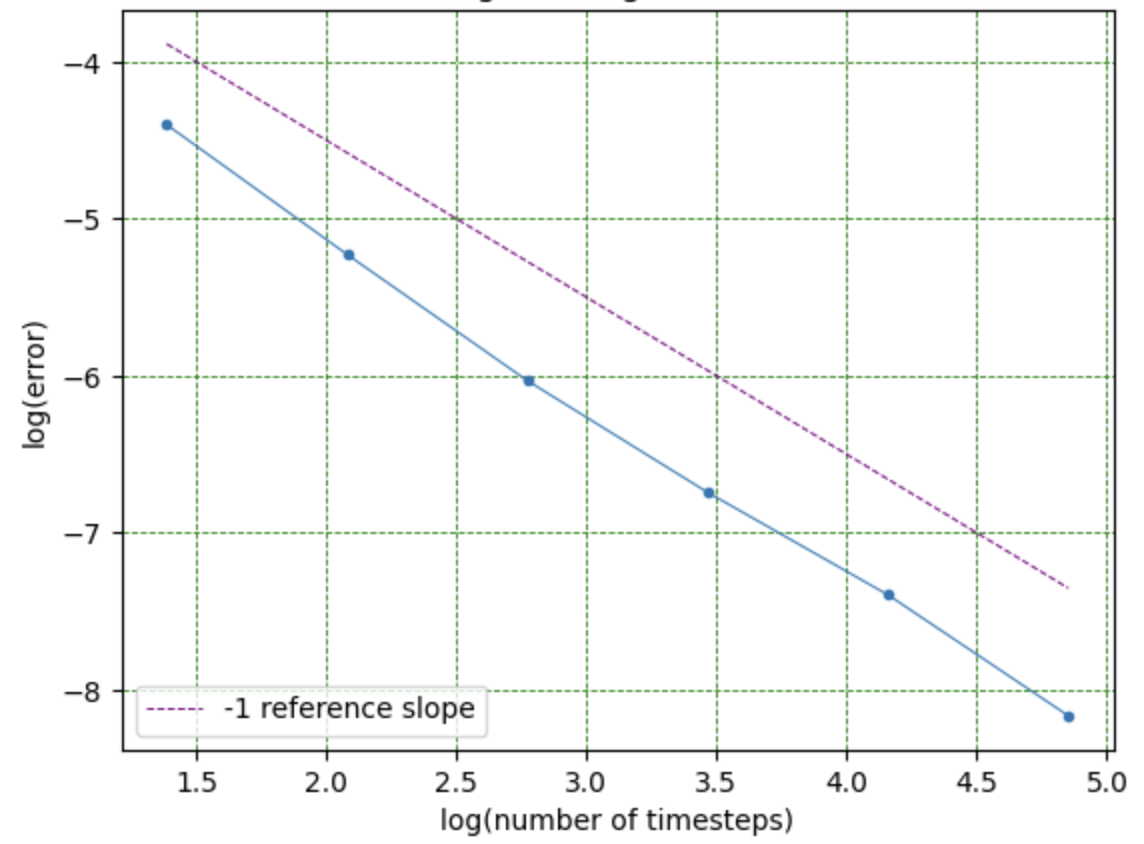}
    \caption{Convergence in time ($\beta=0,\gamma_1=0.1,d=1$)}
    \label{fig:time_convergence}
  \end{minipage}
\end{figure}

We now test how the PG method behaves in the higher-dimensional case when $d=3$, keeping the other parameters the same as above. Computing the optimal control via the Riccati ODE \eqref{optimalcontrol}, we find the true value function to be equal to $0.0664$. In Figure \ref{fig:3dimVF}, the approximated value function converges to $0.0669$, again close to the true value. 
Proceding for Figure \ref{fig:3dim}
analogously as for the $1$-dimensional case, shows again order $1$ convergence in time.
\begin{figure}[H]
  \centering
  \begin{minipage}[b]{0.45\textwidth} 
    \centering
\includegraphics[width=\textwidth]{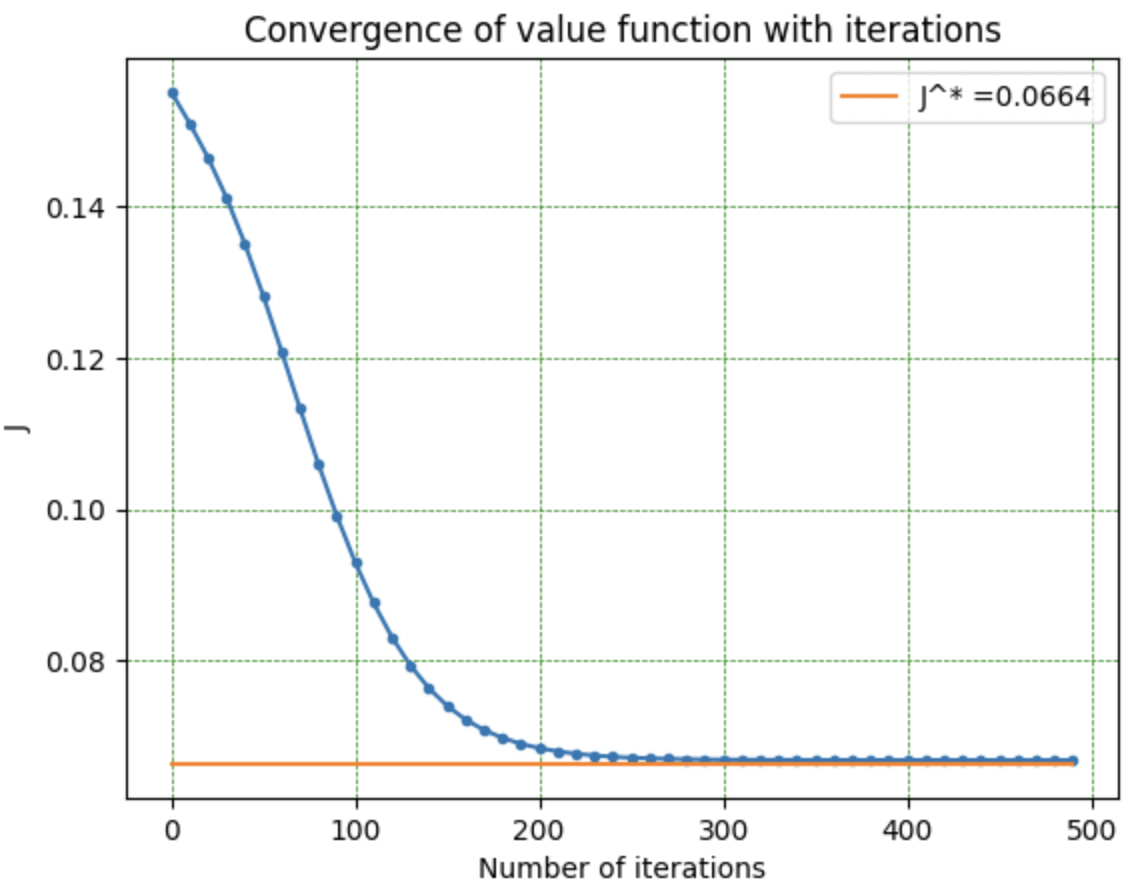}
    \caption{Value function convergence with iterations ($\beta=0$, $\gamma_1 = 0.1$, $d=3$)}
    \label{fig:3dimVF}
  \end{minipage}
  \hfill
  \begin{minipage}[b]{0.45\textwidth} 
    \centering
 \includegraphics[width=\textwidth]{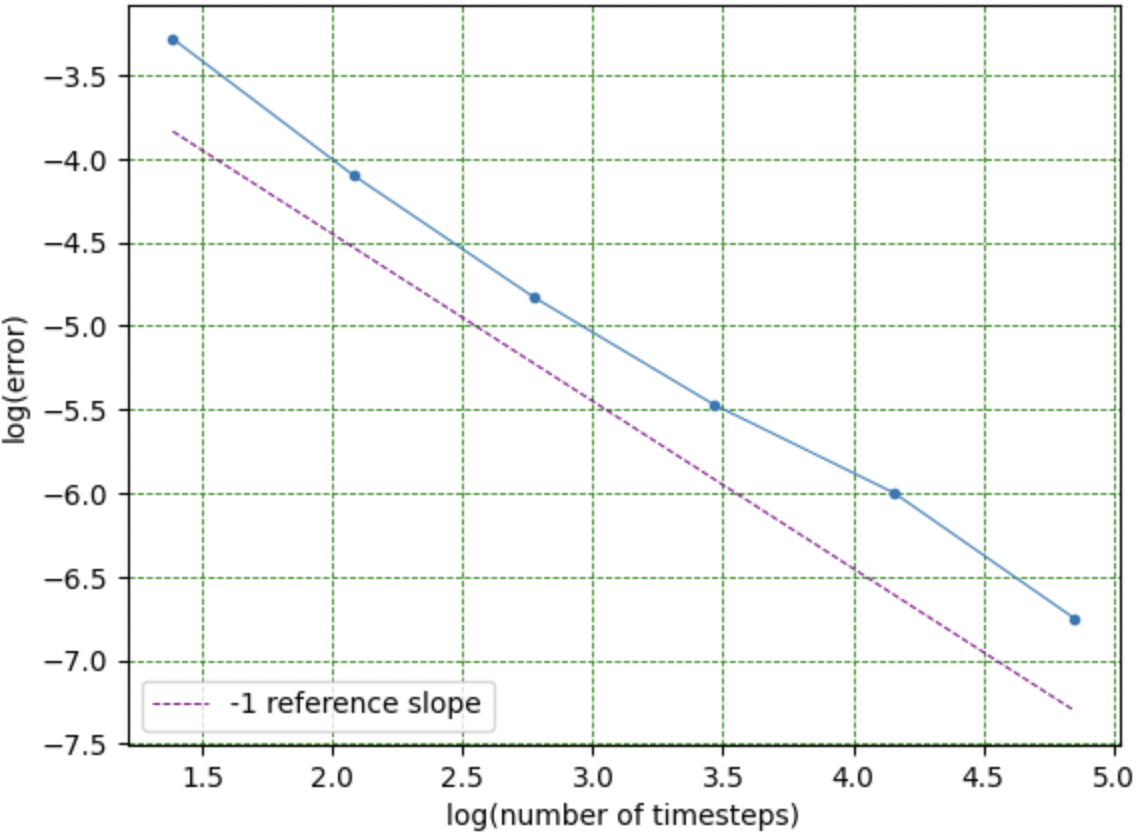}
    \caption{Convergence in time ($\beta=0$, $\gamma_1 = 0.1$, $d=3$)}
    \label{fig:3dim}
  \end{minipage}
\end{figure} 

Finally, we consider $\beta =1$, which gives a non-trivial coupling between $x$ and $v$ and a fully $2d$--dimensional models. We keep the rest of the parameters the same as before, while $K=400$. Recall that $\gamma_1=0.1$, $T=1, N= \mathcal{O}(10^3), d=1, M=128$, and $\sigma =0.1$. This is not a LQ model and therefore we do not have an explicit solution to the optimal feedback control. Nonetheless, in Figure \ref{fig:beta1VF}, we observe that the PG method appears to converge, to a value around 0.0229. The convergence in the time-step in Figure \ref{fig:beta1_t} is again consistent with first order.

\begin{figure}[H]

  \centering
  \begin{minipage}[b]{0.45\textwidth} 
    \centering
\includegraphics[width=\textwidth]{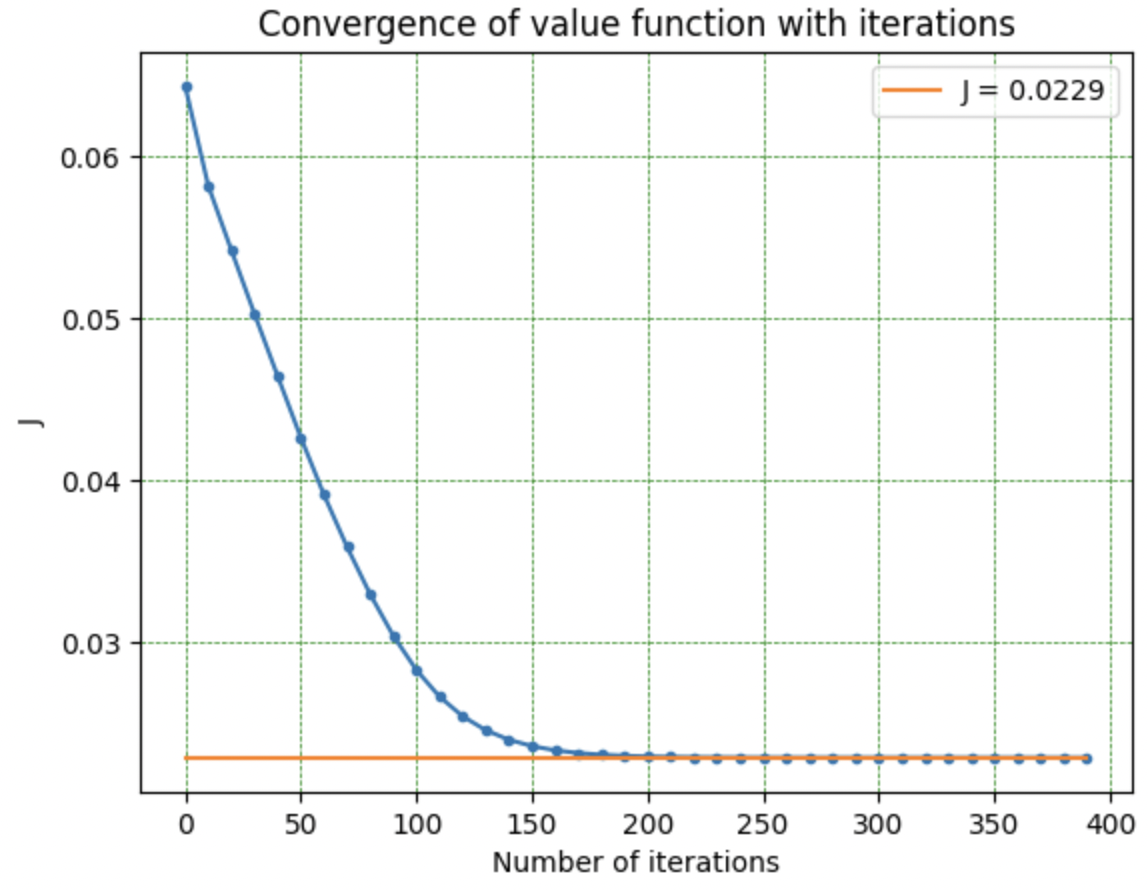}
    \caption{Value function convergence  with iterations ($\beta=1$, $\gamma_1 = 0.1$, $d=1$)}
    \label{fig:beta1VF}
  \end{minipage}
  \hfill
  \begin{minipage}[b]{0.45\textwidth} 
    \centering
 \includegraphics[width=\textwidth]{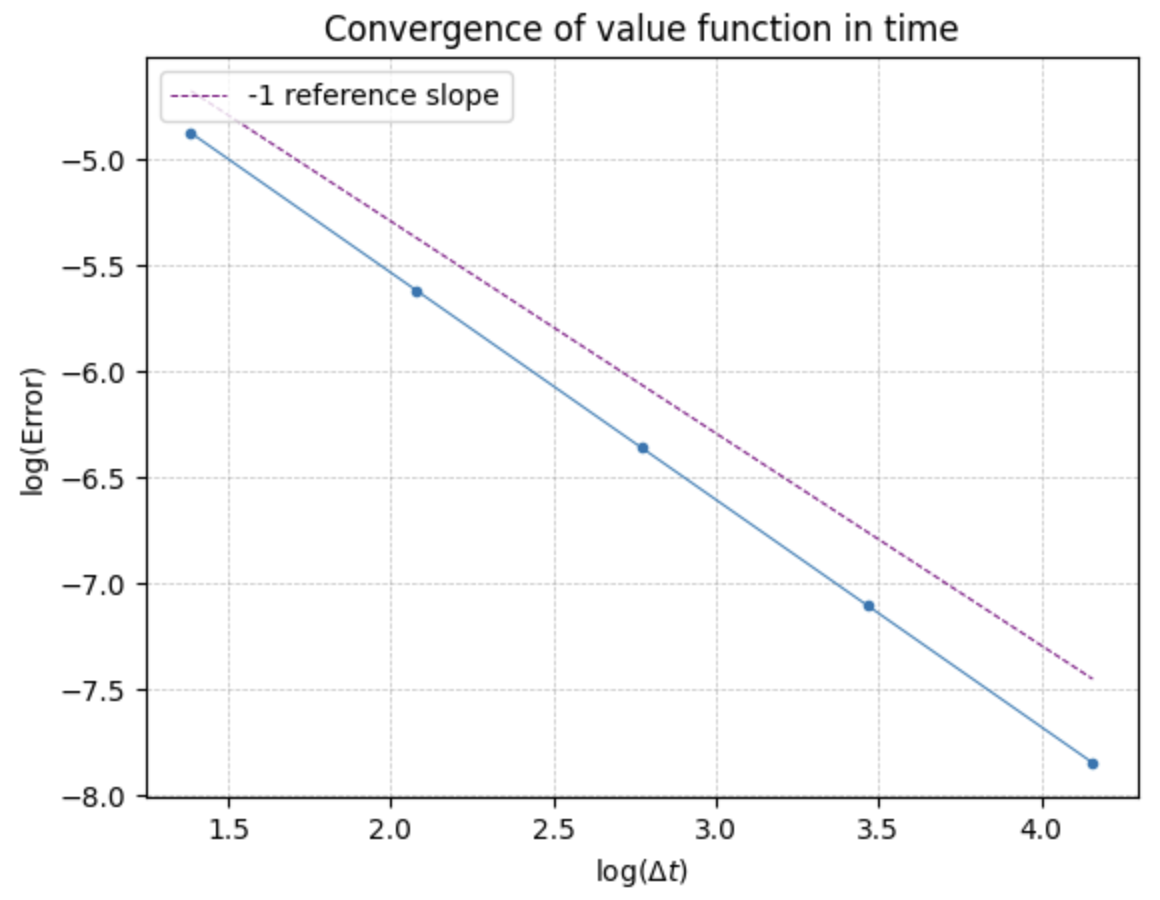}
    \caption{Convergence in time ($\beta=1$, $\gamma_1 = 0.1$, $d=1$)}
    \label{fig:beta1_t}
  \end{minipage}
\end{figure}

\appendix

\section{Proofs of Propositions \ref{prop:mcfE1} and  \ref{prop:mcfE2} and \ref{prop:fbsde_wellposedness}}
\l{appendix:hat_a_existence}

To prove Proposition \ref{prop:mcfE1},
we recall  
the following Kantorovich duality theorem,
which 
follows as a special case of \cite[Theorem 5.10]{Villani}.

\begin{Lemma}\label{lem:Kant}
Let $(\mathcal{X},\mu)$ and $(\mathcal{Y},\nu)$ be two Polish probability spaces and let $\omega: \mathcal{X} \times \mathcal{Y} \to [0,\infty)$ be a continuous function. Then
we have that
\begin{align*}
\inf_{\kappa \in \Pi(\mu,\nu)} \int_{\mathcal{X} \times \mathcal{Y}} \omega(x,y) \, \mathrm{d} \kappa(x,y) =
 \sup_{
 \substack{
 (\psi,\varphi)\in C_b(\cX)\t C_b(\cY),
 \\\varphi-\psi\le \om}}
   \left( \int_{\mathcal{Y}} \varphi(y)  \, \mathrm{d}\nu(y)  - \int_{\mathcal{X}} \psi(x)  \, \mathrm{d}\mu(x)  \right),
\end{align*}
where $\Pi(\mu,\nu)$ is the set of all couplings of $\mu$ and $\nu$,
and $ C_b(\cX)$ (resp.~$C_b(\cY)$) is the space of bounded continuous functions
$\cX\to \sR$ (resp.~$\cY\to \sR$).
\end{Lemma}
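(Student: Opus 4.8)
The plan is to deduce Lemma \ref{lem:Kant} directly from the general Kantorovich duality theorem \cite[Theorem 5.10]{Villani}, after checking its hypotheses and reconciling the sign convention. First I would observe that, since $\cX$ and $\cY$ are Polish, the product $\cX\t\cY$ is Polish, and the cost function $\omega:\cX\t\cY\to[0,\infty)$ is continuous, hence in particular lower semicontinuous; moreover it is bounded below by the integrable pair $(0,0)$, i.e.\ $\omega(x,y)\ge 0 = 0+0$ with $0\in L^1(\mu)$ and $0\in L^1(\nu)$, so the lower-boundedness requirement of \cite[Theorem 5.10]{Villani} is met. That theorem then yields
\begin{align*}
\inf_{\kappa\in\Pi(\mu,\nu)}\int_{\cX\t\cY}\omega(x,y)\,\d\kappa(x,y)
= \sup_{\substack{(\tilde\psi,\varphi)\in C_b(\cX)\t C_b(\cY)\\ \tilde\psi(x)+\varphi(y)\le\omega(x,y)}}\left(\int_{\cX}\tilde\psi(x)\,\d\mu(x)+\int_{\cY}\varphi(y)\,\d\nu(y)\right),
\end{align*}
and in fact the infimum on the left is attained.

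Second, I would perform the substitution $\psi:=-\tilde\psi$. Since $\tilde\psi\mapsto-\tilde\psi$ is a bijection of $C_b(\cX)$ onto itself, the supremum over admissible pairs $(\tilde\psi,\varphi)$ coincides with the supremum over pairs $(\psi,\varphi)\in C_b(\cX)\t C_b(\cY)$ satisfying $\varphi(y)-\psi(x)\le\omega(x,y)$, while the objective transforms into $\int_{\cY}\varphi\,\d\nu-\int_{\cX}\psi\,\d\mu$. Combining the two displays gives precisely the identity claimed in Lemma \ref{lem:Kant}.

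There is no substantial obstacle here: the entire content is contained in \cite[Theorem 5.10]{Villani}, and what remains is only to verify that a continuous, nonnegative cost satisfies that theorem's mild lower semicontinuity and lower-boundedness hypotheses, together with the cosmetic relabelling $\psi\leftrightarrow-\psi$. The one point worth a line of care — and the closest thing to a ``hard part'' — is whether the cited theorem furnishes the duality already with test functions in $C_b(\cX)\t C_b(\cY)$ or only with test functions in $L^1(\mu)\t L^1(\nu)$; in the present setting, with $\omega$ continuous and bounded below, the $C_b$ form is the one stated there, so no extra argument is needed. (Had only the $L^1$ statement been available, one would approximate an admissible $L^1$ pair by bounded continuous functions via truncation and mollification, using $\omega\ge 0$ to preserve the constraint in the limit.)
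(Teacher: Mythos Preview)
Your proposal is correct and matches the paper's approach exactly: the paper does not give a standalone proof of this lemma but simply states that it ``follows as a special case of \cite[Theorem 5.10]{Villani}.'' Your write-up is in fact more detailed than the paper, since you explicitly verify the lower semicontinuity and lower-boundedness hypotheses and carry out the sign substitution $\psi=-\tilde\psi$ needed to match the stated form.
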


\begin{proof}[Proof of Proposition \ref{prop:mcfE1}]
We first show that the functions 
$(\hat{b},\sigma,\hat{f},\hat{g})$ 
satisfy the Lipschitz continuity.
The claim  obviously holds for the function $\sigma$ due to (H.\ref{assum:mfcE}(\ref{item:mfcE_lin})).
To show the Lipschitz continuity of $\hat{g}$,
 for any $(x,\mu), (x',\mu') \in \sR^n \t \cP_2(\sR^n)$ and any coupling $\kappa$ of $\mu$ and $\mu'$
(i.e., $\kappa\in \Pi(\mu,\mu')$),
we observe from (H.\ref{assum:mfcE}(\ref{item:mfcE_growth}))   that
\begin{align}\l{eq:g_hat_lip}
\begin{split}
&|\hat{g}(x,\mu)-\hat{g}(x',\mu')|^2 
\\
& \leq 2|\p_x g(x,\mu)- \p_x g(x',\mu')|^2  
 +  2 \left| \int_{\sR^n}\p_\mu g(\tilde{x},\mu)(x)\,\d \mu(\tilde{x}) - \int_{\sR^n}\p_\mu g(\tilde{x}',\mu')(x')\,\d \mu'(\tilde{x}')  \right|^2 \\
& \leq 
C (|x-x'|^2
+\cW^2_2(\mu,\mu'))
 + 2 \left( \int_{\sR^n \t \sR^n } |\p_\mu g(\tilde{x},\mu)(x) - \p_\mu g(\tilde{x}',\mu')(x')| \,\d \kappa(\tilde{x},\tilde{x}') \right)^2
\\
& \leq C 
\bigg\{
|x-x'|^2 + \cW^2_2(\mu,\mu')+ \left( \int_{\sR^n \t \sR^n } 
\Big(
|\tilde{x}-\tilde{x}'|+|x-x'|+\cW_2(\mu,\mu')
\Big) \,\d \kappa(\tilde{x},\tilde{x}') \right)^2
\bigg\},
\end{split}
\end{align}
where $C>0$ depends on the Lipschitz constant  in (H.\ref{assum:mfcE}(\ref{item:mfcE_growth})). 
Then, by applying Jensen's inequality to the above estimate
and taking the infimum   over all  $\kappa\in \Pi(\mu,\mu')$,
we can deduce that 
$|\hat{g}(x,\mu)-\hat{g}(x',\mu')| \leq C (|x-x'| + \cW_2(\mu,\mu'))$.

Before proceeding to show the Lipschitz continuity of $\hat{b}$ and $\hat{f}$, we first establish the Lipschitz continuity of $\phi(t,\chi)$ defined as in (H.\ref{assum:mfcE_hat}(\ref{item:ex})).
Let $\chi,\chi' \in \cP_2(\sR^{n} \t \sR^{n})$ be given. By applying Lemma \ref{lem:Kant} with $\mathcal{X}=\cY=\sR^{n} \t \sR^{k}$, $\nu=\phi(t,\chi)$, $\mu=\phi(t,\chi')$ and 
the function
$\omega\big((x',y'),(x,y)\big) \coloneqq |x-x'|^2 + |y-y'|^2$ for any $(x,y)\in \cY, (x',y')\in \cX$, 
we can obtain from the definition of $\phi$ that 
\begin{align*}
& \cW_2^{2}(\phi(t,\chi),\phi(t,\chi'))  \\
& = \sup \left( \int_{\sR^{n} \t \sR^{k}} h_1(x,y) \, \mathrm{d}\phi(t,\chi) (x,y) - \int_{\sR^{n} \t \sR^{k}} h_2(x',y') \, \mathrm{d}\phi(t,\chi') (x',y') \right)  \\
& =  \sup \left( \int_{\sR^{n} \t \sR^{n}} h_1(x,\hat{\a}(t,x,y,\chi)) \, \mathrm{d} \chi(x,y) - \int_{\sR^{n} \t \sR^{n}} h_2(x',\hat{\a}(t,x',y',\chi')) \, \mathrm{d} \chi'(x',y') \right),
\end{align*}
where the supremum is taken over all bounded continuous functions $h_1, h_2: \sR^{n} \t \sR^{k} \to \sR$ satisfying $h_1(x,y)- h_2(x',y') \leq |x-x'|^2 + |y-y'|^2$ for any $(x,y), (x',y') \in \sR^{n} \t \sR^{k}$. Note that for any given 
such functions $h_1,h_2$,  the Lipschitz continuity of $\hat{\a}$ in 
  (H.\ref{assum:mfcE_hat}(\ref{item:ex})) implies that
\begin{align*}
\begin{split}
&h_1(x,\hat{\a}(t,x,y,\chi))  - h_2(x',\hat{\a}(t,x',y',\chi')) 
\\
&\leq  (3L^2_{\a}+1) \left( |x-x'|^2 + |y-y'|^2 + \cW_2^{2}(\chi,\chi') \right)
\coloneqq \omega_2\big((x,y),(x',y')\big).
\end{split}
\end{align*}
Hence, another application of Lemma \ref{lem:Kant} with 
$\mathcal{X}=\cY=\sR^{n} \t \sR^{n}$, $\nu=\chi$, $\mu=\chi'$ and $\om=\om_2$ gives us that
\begin{align*}
\cW_2^{2}(\phi(t,\chi),\phi(t,\chi'))
&\le
\sup \left( \int_{\sR^{n} \t \sR^{n}} \tilde{h}_1(x,y) \, \mathrm{d} \chi(x,y) - \int_{\sR^{n} \t \sR^{n}} \tilde{h}_2(x',y') \, \mathrm{d} \chi'(x',y') \right) \\
& = \inf_{\kappa  \in \Pi(\chi',\chi)}  \int_{(\sR^n \t \sR^{n}) \t (\sR^n \t \sR^{n})}  \omega_2(x,y)
\, \mathrm{d} \kappa(x,y),
\end{align*}
where the supremum is 
 taken over all bounded continuous functions $\tilde{h}_1, \tilde{h}_2: \sR^{n} \t \sR^{n} \to\sR$ satisfying
 $\tilde{h}_1-\tilde{h}_2\le \om_2$. Thus, we readily deduce from the above estimate that
\begin{align}\label{eq:PF}
\cW_2(\phi(t,\chi),\phi(t,\chi')) \leq C \cW_2(\chi,\chi'),
\end{align}
with a constant  $C>0$ depending only on $L_{\a}$. 

Now  for any $(x,y,\chi), (x',y',\chi') \in \sR^n\t \sR^n \t \cP_2(\sR^n \t \sR^{n})$, 
we can obtain from \eqref{eq:mfc_coefficients2},  (H.\ref{assum:mfcE}(\ref{item:mfcE_lin})) and the Lipschitz continuity of $\hat{\a}$ in (H.\ref{assum:mfcE_hat}(\ref{item:ex}))  that
\begin{align*}
 &|\hat{b}(t,x,y,\chi) - \hat{b}(t,x',y',\chi')| 
 \\
 &= |b(t,x,\hat{\a}(t,x,y,\chi),\phi(t,\chi)) - b(t,x',\hat{\a}(t,x',y',\chi'),\phi(t,\chi'))| \\
& \leq C \left( |x-x'| + |y-y'| + \cW_2(\chi,\chi') \right),
\end{align*}
which shows the Lipschitz continuity of $\hat{b}$.
Finally, we shall establish the Lipschitz continuity of $\hat{f}$. 
Observe  that $\p_x H(t,\cdot)$  and $\p_\mu H(t,\cdot)(\cdot,\cdot)$ are Lipschitz continuous (uniformly in $t$), 
which follows from the definition  of the Hamiltonian $H$ in \eqref{eq:mfcE_hamiltonian} and 
(H.\ref{assum:mfcE}(\ref{item:mfcE_lin})(\ref{item:mfcE_growth})). Now, for any $(x,y,z,\rho), (x',y',z',\rho') \in \sR^n \t \sR^n \t \sR^{n \times d} \t \cP_2(\sR^n \t \sR^{n} \t  \sR^{n \t d})$,
let $\chi=\pi_{1,2} \sharp \rho$ (resp.~$\chi'=\pi_{1,2} \sharp \rho'$)
  the marginal of the measure $\rho$ (resp.~$\rho'$) on $\sR^n\t\sR^n$. Then,
  we can obtain from the definition of $\hat{f}$ in \eqref{eq:mfc_coefficients2} that 
\begin{align*}
& |\hat{f}(t,x,y,z,\rho) - \hat{f}(t,x',y',z',\rho')| \\
& \leq  | \p_x H(t,x,\hat{\a}(t,x,y,\chi),\phi(t,\chi),y,z) -  \p_x H(t,x',\hat{\a}(t,x',y',\chi'),\phi(t,\chi'),y',z')|  \\
& \quad +   \Bigg |\int_{\sR^n\t \sR^n\t \sR^{n\t d}}
\p_\mu H(t,\tilde{x},\hat{\a}(t,\tilde{x},\tilde{y},\chi),\phi(t,\chi),\tilde{y},\tilde{z})
(x,\hat{\a}(t,{x},{y},\chi))\,\d \rho(\tilde{x},\tilde{y},\tilde{z}) \\
& \quad - \int_{\sR^n\t \sR^n\t \sR^{n\t d}}
\p_\mu H(t,\tilde{x}',\hat{\a}(t,\tilde{x}',\tilde{y}',\chi'),\phi(t,\chi'),\tilde{y}',\tilde{z}')
(x',\hat{\a}(t,{x}',{y}',\chi'))\,\d \rho'(\tilde{x}',\tilde{y}',\tilde{z}') \Bigg| \\
& \coloneqq\Sigma_1 + \Sigma_2. 
\end{align*}
Note that one can easily deduce from Lemma \ref{lem:Kant}
that 
$\cW_2(\pi_{1,2} \sharp  \rho, \pi_{1,2} \sharp \rho') \leq \cW_2(\rho,\rho')$.
Then, by using 
the uniform Lipschitz continuity of $\p_x H(t,\cdot)$,
 (H.\ref{assum:mfcE_hat}(\ref{item:ex})) and
 (\ref{eq:PF}),
 we have the estimate that 
$\Sigma_1 \leq C \left( |x-x'|+ |y-y'|+ |z-z'| + \cW_2(\rho,\rho') \right)$.
 Furthermore, 
by using the same manipulations as in \eqref{eq:g_hat_lip} with  an arbitrary coupling of $\rho$ and  $\rho'$,
and employing  the Lipschitz continuity of $\p_\mu H(t,\cdot)(\cdot,\cdot)$
 along with (H.\ref{assum:mfcE_hat}(\ref{item:ex})) and (\ref{eq:PF}), 
 we can 
conclude the same upper bound for $\Sigma_2$, 
which leads to the desired Lipschitz continuity of $\hat{f}$.

It remains to show that the functions 
$(\hat{b},\sigma,\hat{f})$ 
satisfy the integrability conditions.
We can clearly see from
(H.\ref{assum:mfcE}(\ref{item:mfcE_lin}))
 that 
$\|\sigma(\cdot,0,\bm{\delta}_{0_{n}})\|_{L^\infty(0,T)}=
\|\sigma_0\|_{L^\infty(0,T)}<\infty$.
Moreover, 
 \eqref{eq:PushF} 
 and  (H.\ref{assum:mfcE_hat}(\ref{item:ex})) 
 imply that 
 $\|\phi(t,\bm{\delta}_{0_{n+n}})\|_2=\|\bm{\delta}_{(0,\hat{\a}(t,0,0,\bm{\delta}_{0_{n+n}}))}\|_2 
\le \|\hat{\a}(\cdot,0,0,\bm{\delta}_{0_{n+n}})\|_{L^\infty(0,T)}<\infty$
for all $t\in [0,T]$.
Hence, we can obtain from \eqref{eq:mfc_coefficients2}  and
 (H.\ref{assum:mfcE}(\ref{item:mfcE_lin})(\ref{item:mfcE_growth}))  
that
$ \|\hat{b}(\cdot,0,0,\bm{\delta}_{0_{n+n}})\|_{L^2(0,T)}+
 \| \hat{f}(\cdot,0,0,0,\bm{\delta}_{0_{n+n+nd}})\|_{L^\infty(0,T)}
 <\infty$,
 which completes the proof.
\end{proof}

\begin{proof}[Proof of Proposition \ref{prop:mcfE2}]
Throughout this proof,
let $t\in [0,T]$, 
for all $i\in \{1,2\}$
let 
$\Theta_i=(X_i,Y_i,Z_i)\in L^2(\Om;\sR^n\t \sR^n\t\sR^{n\t d})$
be a given random variable
and
 $\a_i=\hat{\a}(t,X_i,Y_i,\sP_{(X_i,Y_i)})$.

Let $(\tilde{X}_i,\tilde{Y}_i, \tilde{Z}_i)_{i=1}^2$ be an independent copy of 
$({X}_i,{Y}_i, {Z}_i)_{i=1}^2$ defined on the space 
$L^2(\tilde{\Om},\tilde{\cF},\tilde{\sP})$.
By applying the  convexity of $g$ in  (H.\ref{assum:mfcE}(\ref{item:mfcE_convex}))
with $(x',\mu')=(X_1(\om),\sP_{X_1})$, $(x,\mu)=(X_2(\om),\sP_{X_2})$ for each $\om$,
taking the expectation with respect to the measure $\sP$
and then exchanging the role of $X_1$ and $X_2$ in the estimates,
we obtain the desired monotonicity property of $\hat{g}$ 
in \eqref{eq:monotonicity} as follows:
\begin{align*}
0&\leq 
\sE[\la \p_x g(X_1,\sP_{X_1})-\p_x g(X_2,\sP_{X_2}),X_1-X_2 \ra]
\\
& \q +
\sE\big[ \tilde{\sE}[
\la \p_\mu g({X}_1,\sP_{X_1})(\tilde{X}_1)-\p_\mu g({X}_2,\sP_{X_1})(\tilde{X}_2),
\tilde{X}_1-\tilde{X}_2 \ra]\big]
\\
&=\sE[\la \hat{g}(X_1,\sP_{X_1})-\hat{g}(X_2,\sP_{{X}_2}), X_1-X_2\ra],
\end{align*}
where for the last equality we have used Fubini's theorem and the fact that 
$\sP_{{X}_i}=\tilde{\sP}_{\tilde{X}_i}$ for $i=1,2$.

To show monotonicity of $\hat{f}$, 
we first deduce from 
the definition of $\hat{b}$ (see \eqref{eq:mfc_coefficients2})
and
the linearity of $H$ in $(y,z)$ (see \eqref{eq:mfcE_hamiltonian})
that
\begin{align*}
\begin{split}
&\la \hat{b}(t,X_1,Y_1,\sP_{(X_1,Y_1)})-\hat{b}(t,X_2,Y_2,\sP_{(X_2,Y_2)}),  Y_1-Y_2\ra
\\
&\q +\la \sigma(t,X_1,\sP_{X_1})-\sigma(t,X_2,\sP_{X_2}),  Z_1-Z_2\ra
\\
&=\la {b}(t,X_1,\a_1,\sP_{(X_1,\a_1)}),  Y_1-Y_2\ra
+\la \sigma(t,X_1,\sP_{X_1}),  Z_1-Z_2\ra
\\
&\q -
\big({b}(t,X_2,\a_2,\sP_{(X_2,\a_2)}),  Y_1-Y_2\ra
+\la \sigma(t,X_2,\sP_{X_2}), Z_1- Z_2\ra
\big)
\\
&=H(t,X_1,\a_1,\sP_{(X_1,\a_1)},Y_1,Z_1)-H(t,X_1,\a_1,\sP_{(X_1,\a_1)},Y_2,Z_2)
\\
&\q
-\big(H(t,X_2,\a_2,\sP_{(X_2,\a_2)},Y_1,Z_1)-H(t,X_2,\a_2,\sP_{(X_2,\a_2)},Y_2,Z_2)\big).
\end{split}
\end{align*}
Moreover, by 
setting 
$\tilde{\a}_i=\hat{\a}(t,\tilde{X}_i,\tilde{Y}_i,\sP_{(X_i,Y_i)})$ for  all $i=1,2$
and
using the definition of $\hat{f}$ 
in \eqref{eq:mfc_coefficients2},
  we can obtain that
\begin{align*}
\begin{split}
&\sE\big[ \la -\hat{f}(t,\Theta_1,\sP_{\Theta_1})+\hat{f}(t,\Theta_2,\sP_{\Theta_2}), X_1-X_2\ra\big]
\\
&=-\sE\big[\la \p_x H(t,X_1,\a_1,\sP_{(X_1,\a_1)},Y_1,Z_1), {X}_1-{X}_2\ra\big]
\\
&\q
-\sE\big[ \tilde{\sE}[\la
\p_\mu H(t,X_1,\a_1,\sP_{(X_1,\a_1)},Y_1,Z_1)(\tilde{X}_1,\tilde{\a}_1), \tilde{X}_1-\tilde{X}_2\ra]
\big]
\\
&\q
+\sE\big[\la \p_x H(t,X_2,\a_2,\sP_{(X_2,\a_2)},Y_2,Z_2), {X}_1-{X}_2\ra\big]
\\
&\q
+\sE\big[\tilde{\sE}[
\la
\p_\mu H(t,X_2,\a_2,\sP_{(X_2,\a_2)},Y_2,Z_2)(\tilde{X}_2,\tilde{\a}_2), \tilde{X}_1-\tilde{X}_2\ra]
\big],
\end{split}
\end{align*}
where
we have also applied
 Fubini's theorem
and the fact
 that
 $\sP_{({X}_i,{Y}_i,{Z}_i,{\a}_i)}=\tilde{\sP}_{(\tilde{X}_i,\tilde{Y}_i,\tilde{Z}_i,\tilde{\a}_i)}$ for all $i=1,2$.

Therefore, we can conclude from \eqref{eq:mfcE_H_convex} that
\begin{align*}
\begin{split}
&\sE\big[\la \hat{b}(t,X_1,Y_1,\sP_{(X_1,Y_1)})-\hat{b}(t,X_2,Y_2,\sP_{(X_2,Y_2)}),  Y_1-Y_2\ra
\\
&\q+\la \sigma(t,X_1,\sP_{X_1})-\sigma(t,X_2,\sP_{X_2}),  Z_1-Z_2\ra
+ \la -\hat{f}(t,\Theta_1,\sP_{\Theta_1})+\hat{f}(t,\Theta_2,\sP_{\Theta_2}), X_1-X_2\ra\big]
\\
&=\sE\Big[H(t,X_1,\a_1,\sP_{(X_1,\a_1)},Y_1,Z_1)-H(t,X_2,\a_2,\sP_{(X_2,\a_2)},Y_1,Z_1)
%
\\
&\quad
-\la \p_x H(t,X_1,\a_1,\sP_{(X_1,\a_1)},Y_1,Z_1), {X}_1-{X}_2\ra
 \\
& \quad - \tilde{\sE}[\la
\p_\mu H(t,X_1,\a_1,\sP_{(X_1,\a_1)},Y_1,Z_1)(\tilde{X}_1,\tilde{\a}_1), \tilde{X}_1-\tilde{X}_2\ra]
\Big]
\\
&\q
-\sE\Big[H(t,X_1,\a_1,\sP_{(X_1,\a_1)},Y_2,Z_2) - H(t,X_2,\a_2,\sP_{(X_2,\a_2)},Y_2,Z_2)
\\
&\quad
-\la \p_x H(t,X_2,\a_2,\sP_{(X_2,\a_2)},Y_2,Z_2), {X}_1-{X}_2\ra
\\
& \quad -\tilde{\sE}[
\la
\p_\mu H(t,X_2,\a_2,\sP_{(X_2,\a_2)},Y_2,Z_2)(\tilde{X}_2,\tilde{\a}_2), \tilde{X}_1-\tilde{X}_2\ra]
\Big]
\\
&\le 
-2(\lambda_1 +\lambda_2) \sE[|\a_1-\a_2|^2], 
\end{split}
\end{align*}
where 
we have applied
Fubini's theorem,
\eqref{eq:opti_markov},
 and the definitions of 
$(\a_1,\a_2)$
  to derive the last estimate.
This shows the desired monotonicity property of $\hat{f}$ 
and completes the proof.
\end{proof}

To prove Proposition 
\ref{prop:fbsde_wellposedness}, we consider 
 the following family of MV-FBSDEs:
for $t\in [0,T]$,
\begin{align}\l{eq:moc}
\begin{split}
\d X_t&=(\lambda \hat{b}(t,X_t,Y_t,\sP_{(X_t,Y_t)})+\cI^{\hat{b}}_t)\,\d t +
(\lambda\sigma (t,X_t, \sP_{X_t})+\cI^\sigma_t)\, \d W_t, 
\\
\d Y_t&=-(\lambda \hat{f}(t,X_t,Y_t,Z_t, \sP_{(X_t,Y_t,Z_t)})+\cI^{\hat{f}}_t)\,\d t+Z_t\,\d W_t,
\\
X_0&=\xi,\q Y_T=\lambda \hat{g}(X_T,\sP_{X_T})+\cI^{\hat{g}}_T,
\end{split}
\end{align}
where  $\lambda\in [0,1]$, 
$\xi\in L^2(\cF_0;\sR^n)$,
$(\cI^{\hat{b}},\cI^\sigma,\cI^{\hat{f}})\in \cH^2(\sR^n\t\sR^{n\t d}\t \sR^n)$ and $\cI^{\hat{g}}_T\in L^2(\cF_T;\sR^n)$
are given.
The following lemma establishes the stability of \eqref{eq:moc}.
\begin{Lemma}\l{lemma:mono_stab}
Suppose (H.\ref{assum:mfcE}) and (H.\ref{assum:mfcE_hat}(\ref{item:ex})) hold,
and let the functions 
$(\hat{b},\hat{f},\hat{g})$ 
be defined as in \eqref{eq:mfc_coefficients2}.
 Then, there exists a constant $C>0$ such that,  
 for all $\lambda_0\in [0,1]$,
for every   
${\Theta}\coloneqq(X,Y, Z)\in  \cS^2(\sR^n) \t \cS^2(\sR^n) \t \cH^2(\sR^{n\t d})$
satisfying \eqref{eq:moc}
with 
$\lambda=\lambda_0$,
  functions $(\hat{b},\sigma,\hat{f},\hat{g})$
  and some
$(\cI^{\hat{b}},\cI^\sigma,\cI^{\hat{f}})\in \cH^2(\sR^n\t\sR^{n\t d}\t \sR^n)$,
 $\cI^{\hat{g}}_T\in L^2(\cF_T;\sR^n)$,
 $\xi\in L^2(\cF_0;\sR^n)$,
 and for every 
$ \bar{\Theta}\coloneqq(\bar{X},\bar{Y}, \bar{Z})\in  \cS^2(\sR^n) \t \cS^2(\sR^n) \t \cH^2(\sR^{n\t d})$
satisfying  \eqref{eq:moc}
with 
$\lambda=\lambda_0$,
another 4-tuple of Lipschitz functions $(\bar{b},\bar{\sigma},\bar{f},\bar{g})$ 
 and some 
$(\bar{\cI}^b,\bar{\cI}^\sigma,\bar{\cI}^f)\in \cH^2(\sR^n\t\sR^{n\t d}\t \sR^n)$,
 $\bar{\cI}^g_T\in L^2(\cF_T;\sR^n)$,
 $\bar{\xi}\in L^2(\cF_0;\sR^n)$, we have that 
 \begin{align}\l{eq:mono_stab}
 \begin{split}
 &\|X-\bar{X}\|_{\cS^2}^2+ \|Y-\bar{Y}\|_{\cS^2}^2+ \|Z-\bar{Z}\|_{\cH^2}^2
 \\
& \le C\bigg\{\|\xi-\bar{\xi}\|_{L^2}^2+
\|\lambda_0 (\hat{g}(\bar{X}_T,\sP_{\bar{X}_T})-\bar{g}(\bar{X}_T,\sP_{\bar{X}_T}))+\cI^{\hat{g}}_T-\bar{\cI}^g_T\|_{L^2}^2
\\
&\quad
+\|\lambda_0 (\hat{b}(\cdot,\bar{X}_\cdot,\bar{Y}_\cdot,\sP_{(\bar{X},\bar{Y})_\cdot})-\bar{b}(\cdot,\bar{X}_\cdot,\bar{Y}_\cdot,\sP_{(\bar{X},\bar{Y})_\cdot}))
+\cI^{\hat{b}}-\bar{\cI}^b\|_{\cH^2}^2
\\
&\quad
+\|\lambda_0 (\sigma(\cdot,\bar{X}_\cdot,\sP_{\bar{X}_\cdot})-\bar{\sigma}(\cdot,\bar{X}_\cdot,\sP_{\bar{X}_\cdot}))
+\cI^\sigma-\bar{\cI}^\sigma\|_{\cH^2}^2
\\
&\quad
+\|\lambda_0 (\hat{f}(\cdot,\bar{\Theta}_\cdot,\sP_{\bar{\Theta}_\cdot})-\bar{f}(\cdot,\bar{\Theta}_\cdot,\sP_{\bar{\Theta}_\cdot}))
+\cI^{\hat{f}}-\bar{\cI}^f\|_{\cH^2}^2
\bigg\}.
\end{split}
  \end{align} 
\end{Lemma}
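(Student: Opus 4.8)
The plan is to prove the stability estimate \eqref{eq:mono_stab} via the classical monotonicity argument underlying the method of continuation. Write $\delta X = X - \bar X$, $\delta Y = Y - \bar Y$, $\delta Z = Z - \bar Z$, and similarly $\delta \xi = \xi - \bar\xi$; also abbreviate the driver differences, e.g. $\delta \cI^b \coloneqq \lambda_0(\hat b(\cdot,\bar X,\bar Y,\sP_{(\bar X,\bar Y)}) - \bar b(\cdot,\bar X,\bar Y,\sP_{(\bar X,\bar Y)})) + \cI^{\hat b} - \bar{\cI}^b$, and likewise $\delta\cI^\sigma$, $\delta\cI^f$, $\delta \cI^g_T$. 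Subtracting the two copies of \eqref{eq:moc}, the pair $(\delta X,\delta Y,\delta Z)$ solves a linear MV-FBSDE whose drift/diffusion/terminal data are $\lambda_0$ times the increments of $(\hat b,\sigma,\hat f,\hat g)$ evaluated along $(X,Y,Z)$ and $(\bar X,\bar Y,\bar Z)$ plus the inhomogeneous terms $\delta\cI^b,\delta\cI^\sigma,\delta\cI^f,\delta\cI^g_T$. The key step is to compute $\d\,\sE[\langle \delta X_t,\delta Y_t\rangle]$ by It\^o's product rule and integrate over $[0,T]$.

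First I would carry out the It\^o expansion: since $\d(\delta X_t) = (\lambda_0(\hat b(t,\Theta_t)-\hat b(t,\bar\Theta_t)) + \delta\cI^b_t)\,\d t + (\lambda_0(\sigma(t,X_t)-\sigma(t,\bar X_t))+\delta\cI^\sigma_t)\,\d W_t$ and similarly for $\delta Y$ with driver $-(\lambda_0(\hat f(t,\Theta_t)-\hat f(t,\bar\Theta_t)) + \delta\cI^f_t)$ and martingale part $\delta Z_t\,\d W_t$, the product rule gives
\begin{align*}
\sE[\langle \delta X_T,\delta Y_T\rangle] - \sE[\langle \delta\xi,\delta Y_0\rangle]
&= \sE\bigg[\int_0^T \Big( \langle \lambda_0(\hat b(t,\Theta_t)-\hat b(t,\bar\Theta_t))+\delta\cI^b_t, \delta Y_t\rangle \\
&\quad - \langle \lambda_0(\hat f(t,\Theta_t)-\hat f(t,\bar\Theta_t))+\delta\cI^f_t, \delta X_t\rangle \\
&\quad + \langle \lambda_0(\sigma(t,X_t)-\sigma(t,\bar X_t))+\delta\cI^\sigma_t, \delta Z_t\rangle \Big)\d t\bigg].
\end{align*}
Collecting the three terms that are linear in $\lambda_0$ and pure increments of $(\hat b,\sigma,\hat f)$, Proposition \ref{prop:mcfE2} (the monotonicity inequality \eqref{eq:monotonicity}, applied with $(X_1,Y_1,Z_1)=\Theta_t$, $(X_2,Y_2,Z_2)=\bar\Theta_t$) bounds their sum by $-2\lambda_0(\lambda_1+\lambda_2)\|\hat\a(t,X_t,Y_t,\sP_{(X_t,Y_t)})-\hat\a(t,\bar X_t,\bar Y_t,\sP_{(\bar X_t,\bar Y_t)})\|_{L^2}^2 \le 0$; on the terminal side, Proposition \ref{prop:mcfE2} also gives $\lambda_0\sE[\langle \hat g(X_T)-\hat g(\bar X_T),\delta X_T\rangle]\ge 0$. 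Inserting $\sE[\langle \delta X_T,\delta Y_T\rangle] = \lambda_0\sE[\langle \hat g(X_T)-\hat g(\bar X_T),\delta X_T\rangle] + \sE[\langle \delta\cI^g_T,\delta X_T\rangle]$ and rearranging, all the ``good'' monotone terms drop out with a favourable sign, leaving an inequality in which the left side controls $\lambda_0(\lambda_1+\lambda_2)\int_0^T\|\delta\a_t\|_{L^2}^2\,\d t$ (nonnegative, hence discardable for the upper bound we want) and the right side is a sum of cross terms pairing $\delta\cI^b,\delta\cI^\sigma,\delta\cI^f,\delta\cI^g_T,\delta\xi$ against $\delta X$, $\delta Y$, $\delta Z$.

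Next I would close the estimate by combining this with standard a priori bounds. For the fixed $\lambda=\lambda_0$ the system \eqref{eq:moc} is, for $(\delta X,\delta Y,\delta Z)$, a linear FBSDE with Lipschitz coefficients (Lipschitz constants uniform in $t$ by Proposition \ref{prop:mcfE1} and by the assumed Lipschitz property of $(\bar b,\bar\sigma,\bar f,\bar g)$); applying the usual moment estimates for the forward SDE (Gr\"onwall) and for the backward SDE (BDG together with Gr\"onwall) one gets
\[
\|\delta X\|_{\cS^2}^2 \le C\big(\|\delta\xi\|_{L^2}^2 + \|\delta\cI^b\|_{\cH^2}^2 + \|\delta\cI^\sigma\|_{\cH^2}^2 + \|\delta Y\|_{\cH^2}^2\big),\qquad
\|\delta Y\|_{\cS^2}^2 + \|\delta Z\|_{\cH^2}^2 \le C\big(\|\delta\cI^g_T\|_{L^2}^2 + \|\delta\cI^f\|_{\cH^2}^2 + \|\delta X\|_{\cS^2}^2\big).
\]
Feeding these into the cross terms on the right of the monotonicity identity and using Young's inequality with small parameter $\varepsilon$ to absorb the $\|\delta X\|,\|\delta Y\|,\|\delta Z\|$ factors into the left-hand side, one arrives at \eqref{eq:mono_stab} with a constant $C$ depending only on $T$, the Lipschitz constants from Proposition \ref{prop:mcfE1}, and $\lambda_1+\lambda_2$ — crucially independent of $\lambda_0$. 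The main obstacle is the bookkeeping in the It\^o product-rule step: one must make sure the increment of $(\hat b,\sigma,\hat f,\hat g)$ appearing in \eqref{eq:monotonicity} is exactly the one produced here, that the inhomogeneous parts $\delta\cI^\bullet$ are correctly separated out before invoking monotonicity, and that the sign of the terminal contribution is handled consistently; once the monotone terms are correctly identified and dropped, the rest is the routine a priori estimate plus Young's inequality.
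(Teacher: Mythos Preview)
Your setup via It\^o's product rule and the invocation of Proposition~\ref{prop:mcfE2} are correct and match the paper's opening. The gap is in how you close the estimate afterwards.

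You write that the monotone term $\lambda_0(\lambda_1+\lambda_2)\int_0^T\|\delta\hat\a_t\|_{L^2}^2\,\d t$ is ``nonnegative, hence discardable,'' and then appeal to the two a priori bounds
\[
\|\delta X\|_{\cS^2}^2 \le C\big(\cdots + \|\delta Y\|_{\cH^2}^2\big),
\qquad
\|\delta Y\|_{\cS^2}^2 + \|\delta Z\|_{\cH^2}^2 \le C\big(\cdots + \|\delta X\|_{\cS^2}^2\big).
\]
But these two estimates are circular: substituting one into the other gives $\|\delta X\|_{\cS^2}^2 \le C\|\delta X\|_{\cS^2}^2 + \cdots$ with a constant $C$ you have no control over (it comes from generic Lipschitz bounds, not from anything small). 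Once you have thrown away the coercive term, the monotonicity identity degenerates to $0 \le \eps\|\delta\Theta\|^2 + C\eps^{-1}\textrm{RHS}$, which carries no information, so there is nothing left to ``absorb into the left-hand side.''

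The paper's argument keeps the term $\lambda_0\int_0^T\|\delta\hat\a_t\|_{L^2}^2\,\d t$ and uses it. The point is that the forward drift $\hat b(t,x,y,\chi)=b(t,x,\hat\a(t,x,y,\chi),\phi(t,\chi))$ with $b$ affine depends on $(y,\chi)$ \emph{only through} $\hat\a$; hence the forward a priori estimate actually reads
\[
\|\delta X\|_{\cS^2}^2 \le C\Big(\lambda_0\!\int_0^T\|\delta\hat\a_t\|_{L^2}^2\,\d t + \textrm{RHS}\Big),
\]
with no bare $\|\delta Y\|$ on the right. Now the monotonicity inequality (after Young) bounds $\lambda_0\int\|\delta\hat\a_t\|^2\,\d t \le \eps\|\delta\Theta\|^2 + C\eps^{-1}\textrm{RHS}$; inserting this into the forward estimate and then into the standard BSDE estimate closes the loop for small $\eps$. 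So the term you discarded is exactly the device that decouples the forward and backward bounds.
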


\begin{proof}[Proof of Lemma \ref{lemma:mono_stab}]
{
For ease of notation, we will write $b,f,g$ instead of $\hat{b}, \hat{f}, \hat{g}$.
Also, throughout this proof, let 
$\delta \xi=\xi-\bar{\xi}$,
$\delta \cI^g_T=\cI^g_T-\bar{\cI}^g_T$,
  $g(X_T)=g(X_T,\sP_{X_T})$,
$g(\bar{X}_T)=g(\bar{X}_T,\sP_{\bar{X}_T})$ 
and $\bar{g}(\bar{X}_T)=\bar{g}(\bar{X}_T,\sP_{\bar{X}_T})$,
for each $t\in [0,T]$
 let 
 $\delta\cI^b_t=\cI^b_t-\bar{\cI}^b_t$,
  $\delta\cI^\sigma_t=\cI^\sigma_t-\bar{\cI}^\sigma_t$,
   $\delta\cI^f_t=\cI^f_t-\bar{\cI}^f_t$,
  $f(\Theta_t)=f(t,X_t,Y_t,\sP_{\Theta_t})$,
$f(\bar{\Theta}_t)=f(t,\bar{X}_t,\bar{Y}_t,\sP_{\bar{\Theta}_t})$
and $\bar{f}(\bar{\Theta}_t)=\bar{f}(t,\bar{X}_t,\bar{Y}_t,\sP_{\bar{\Theta}_t})$.
Similarly, we introduce the notation
$\sigma(X_t), \sigma(\bar{X}_t), \bar{\sigma}(\bar{X}_t)$ and $b(X_t,Y_t), b(\bar{X}_t,\bar{Y}_t), \bar{b}(\bar{X}_t,\bar{Y}_t)$ for $t\in [0,T]$.

By applying It\^{o}'s formula to $\la Y_t-\bar{Y}_t,X_t-\bar{X}_t\ra$,  we obtain that 
\begin{align*}
&\sE[\la \lambda_0(g(X_T)-\bar{g}(\bar{X}_T))+\delta I^g_T,X_T-\bar{X}_T \ra]
-\sE[\la Y_0-\bar{Y}_0, \delta \xi\ra]
\\
&=\sE\bigg[\int_0^T
\la \lambda_0(b(X_t,Y_t)-\bar{b}(\bar{X}_t,\bar{Y}_t))+\delta \cI^b_t,Y_t-\bar{Y}_t\ra 
+
\la \lambda_0(\sigma(X_t)-\bar{\sigma}(\bar{X}_t))+\delta \cI^\sigma_t,Z_t-\bar{Z}_t \ra
\\
&\quad 
+
\la -\big(\lambda_0(f(\Theta_t)-\bar{f}(\bar{\Theta}_t))+\delta \cI^f_t\big),X_t-\bar{X}_t \ra
\, \d t
\bigg].
\end{align*}
Then, 
by adding and subtracting the terms
$g(\bar{X}_T), b(\bar{X}_t,\bar{Y}_t),\sigma(\bar{X}_t),f(\bar{\Theta}_t)$
and applying the monotonicity property established in Proposition \ref{prop:mcfE2}, we can deduce that
\begin{align*}
&
\sE[\la \lambda_0(g(\bar{X}_T)-\bar{g}(\bar{X}_T))+\delta I^g_T,X_T-\bar{X}_T\ra]
-\sE[\la Y_0-\bar{Y}_0,\delta \xi\ra]
\\
&\le 
\sE\bigg[\int_0^T
\la \lambda_0(b(\bar{X}_t,\bar{Y}_t)-\bar{b}(\bar{X}_t,\bar{Y}_t)+\delta \cI^b_t, Y_t-\bar{Y}_t\ra 
+
\la \lambda_0(\sigma(\bar{X}_t)-\bar{\sigma}(\bar{X}_t))+\delta \cI^\sigma_t,Z_t-\bar{Z}_t \ra
\\
&\quad 
+
\la -\big(\lambda_0(f(\bar{\Theta}_t)-\bar{f}(\bar{\Theta}_t))+\delta \cI^f_t\big),X_t-\bar{X}_t\ra
\, \d t
\bigg]-2(\lambda_1 + \lambda_2)\lambda_0\int_0^T \phi_1(t,X_t,Y_t,\bar{X}_t,\bar{Y}_t) \, \d t,
\end{align*}
with $\phi_1(t,X_t,Y_t,\bar{X}_t,\bar{Y}_t) := \| \hat{\alpha}(t,X_t,Y_t,\mathbb{P}_{(X_t,Y_t)}) - \hat{\alpha}(t,\bar{X}_t, \bar{Y}_t,\mathbb{P}_{(\bar{X}_t,\bar{Y}_t)})  \|^2_{L_2}$,
which together with Young's inequality yields for each $\eps>0$ that
\begin{align}\l{eq:mono_stab_decouple}
\begin{split}
&
2(\lambda_1 + \lambda_2)\lambda_0 \int_0^T
 \phi_1(t,X_t,Y_t,\bar{X}_t,\bar{Y}_t) \, \d t
  \\
  &
  \le
\eps( \|X_T-\bar{X}_T\|_{L^2}^2+\|Y_0-\bar{Y}_0\|_{L^2}^2
 +\|\Theta-\bar{\Theta}\|_{\cH^2}^2)
 +C{\eps}^{-1}\textrm{RHS},
 \end{split}
\end{align}
where $\textrm{RHS}$ denotes the right-hand side of \eqref{eq:mono_stab}.

Now, by \eqref{eq:mono_stab_decouple} and the fact that $\lambda_1 + \lambda_2 >0$,
we have  for all $\eps>0$,
\begin{align}\l{eq:mono_stab_decouple_m<n}
\begin{split}
&
\lambda_0\int_0^T
  \phi_1(t,X_t,Y_t,\bar{X}_t,\bar{Y}_t)\, \d t
  \le
\eps( \|X-\bar{X}\|_{\cS^2}^2+\|Y-\bar{Y}\|_{\cS^2}^2
 +\|Z-\bar{Z}\|_{\cH^2}^2)
 +C{\eps}^{-1}\textrm{RHS}.
 \end{split}
\end{align}
Then, by using the Burkholder-Davis-Gundy  inequality, the definition of \eqref{eq:moc} and \eqref{eq:mfc_coefficients2}, Gronwall's inequality
and the fact that $\lambda_0\in [0,1]$, we can 
deduce that 
\begin{align*}
\|X-\bar{X}\|_{\cS^2}^2
&\le 
C\bigg(
\int_0^T \lambda_0\phi_1(t,X_t,Y_t,\bar{X}_t,\bar{Y}_t)\, \d t
+
\|\xi-\bar{\xi}\|_{L^2}^2
\\
&\quad
+\|\lambda_0 (b(\bar{X},\bar{Y})-\bar{b}(\bar{X},\bar{Y})
+\delta \cI^b\|_{\cH^2}^2
+\|\lambda_0 (\sigma(\bar{X})-\bar{\sigma}(\bar{X}))
+\delta \cI^\sigma\|_{\cH^2}^2
\bigg),
\end{align*}
which together with \eqref{eq:mono_stab_decouple_m<n} yields for all small enough $\eps>0$ that 
\begin{align*}
\begin{split}
&
\|X-\bar{X}\|_{\cS^2}^2
  \le
\eps(\|Y-\bar{Y}\|_{\cS^2}^2
 +\|Z-\bar{Z}\|_{\cH^2}^2)
 +C{\eps}^{-1}\textrm{RHS}.
 \end{split}
\end{align*}
Moreover,  by standard estimates for MV-BSDEs, we can obtain that
\begin{align*}
&\|Y-\bar{Y}\|_{\cS^2}^2
 +\|Z-\bar{Z}\|_{\cH^2}^2
 \\
&  \le
  C\bigg(\|X-\bar{X}\|_{\cS^2}^2
  +
  \|\lambda_0 (g(\bar{X}_T)-\bar{g}(\bar{X}_T))+\delta \cI^g_T\|_{L^2}^2
  +\|\lambda_0 (f(\bar{\Theta})-\bar{f}(\bar{\Theta}))
+\delta\cI^f\|_{\cH^2}^2
\bigg),
\end{align*}
which completes the desired estimate \eqref{eq:mono_stab}.}
\end{proof}

\begin{proof}[Proof of Proposition \ref{prop:fbsde_wellposedness}]
{We shall establish the  well-posedness, stability and \textit{a priori} estimates
 for \eqref{eq:mfc_fbsde2} with
an initial time
 $t=0$ and initial state $\xi_0\in L^2(\cF_0;\sR^n)$
 by applying Lemma \ref{lemma:mono_stab}. Similar arguments apply to a general 
 initial time 
 $t\in [0,T]$ and initial state $\xi\in L^2(\cF_t;\sR^n)$.

Let us start by proving 
the unique solvability of \eqref{eq:mfc_fbsde2} with a given $\xi_0\in L^2(\cF_0;\sR^n)$.
To simplify the notation, 
for every $\lambda_0\in [0,1]$, we say $(\cP_{\lambda_0})$ holds if 
 for any 
 $\xi\in  L^2(\cF_0;\sR^n)$,
$(\cI^{\hat{b}},\cI^\sigma,\cI^{\hat{f}})\in \cH^2(\sR^n\t\sR^{n\t d}\t \sR^n)$ and $\cI^{\hat{g}}_T\in L^2(\cF_T;\sR^n)$,
\eqref{eq:moc} with $\lambda=\lambda_0$ 
admits a unique solution in $\sB\coloneqq \cS^2(\sR^n) \t \cS^2(\sR^n) \t \cH^2(\sR^{n\t d})$.
It is clear that $(\cP_{0})$ holds since \eqref{eq:moc} is decoupled. 
Now we show 
 there exists a constant $\delta>0$, such that 
 if $(\cP_{\lambda_0})$   holds for some $\lambda_0\in [0,1)$,
 then $(\cP_{\lambda'_0})$ also holds for  all $\lambda_0'\in (\lambda_0,\lambda_0+\delta]\cap [0,1]$.
Note that this claim along with  the method of continuation
 implies the desired unique solvability of \eqref{eq:mfc_fbsde2} (i.e., \eqref{eq:moc} with $\lambda=1$,
$(\cI^{\hat{b}},\cI^\sigma,\cI^{\hat{f}},\cI^{\hat{g}}_T)=0$, $\xi=\xi_0$).

To establish the desired claim, let  $\lambda_0\in [0,1)$
be a constant for which $(\cP_{\lambda_0})$ holds,
$\eta\in [0,1]$ and 
$(\tilde{\cI}^{\hat{b}},\tilde{\cI}^\sigma,\tilde{\cI}^{\hat{f}})\in \cH^2(\sR^n\t\sR^{n\t d}\t \sR^n)$, $\tilde{\cI}^{\hat{g}}_T\in L^2(\cF_T;\sR^n)$, 
$\xi\in  L^2(\cF_0;\sR^n)$
be
arbitrarily given coefficients. Then,
we introduce the following mapping $\Xi:\sB\to \sB$
such that 
for all $\Theta=(X,Y,Z)\in \sB$, $\Xi(\Theta)\in \sB$ is the solution to \eqref{eq:moc} 
with $\lambda=\lambda_0$,
$\cI^{\hat{b}}_t=\eta \hat{b}(t,X_t,Y_t\sP_{(X_t,Y_t)})+\tilde{\cI}^{\hat{b}}_t$,
$\cI^\sigma_t=\eta \sigma (t,X_t, \sP_{X_t})+\tilde{\cI}^\sigma_t$,
$\cI^{\hat{f}}_t=\eta \hat{f}(t,\Theta_t, \sP_{\Theta_t})+\tilde{\cI}^{\hat{f}}_t$
and $\cI^{\hat{g}}_T=\eta \hat{g}(X_T,\sP_{X_T})+\tilde{\cI}^{\hat{g}}_T$,
which is well-defined due to the fact that $\lambda_0\in [0,1)$ satisfies the induction hypothesis.
Observe that
by setting $(\bar{b},\bar{\sigma},\bar{f},\bar{g})=(\hat{b},\sigma,\hat{f},\hat{g})$ in Lemma \ref{lemma:mono_stab},
 we see that there exists a constant $C>0$, independent of $\lambda_0$, such that it holds  for all $\Theta,\Theta'\in \sB$ that
 \begin{align*}
 \begin{split}
 &\|\Xi(\Theta)-\Xi({\Theta}')\|_{\sB}^2
 \\
& \le C\bigg\{\|\eta (\hat{g}({X}_T,\sP_{{X}_T})-\hat{g}({X}'_T,\sP_{{X}'_T}))\|_{L^2}^2
+\|\eta (\hat{b}(\cdot,{X}_\cdot,{Y}_\cdot,\sP_{{(X,Y)}_\cdot})-\hat{b}(\cdot,{X}'_\cdot,{Y}'_\cdot,\sP_{{X',Y'}_\cdot}))
\|_{\cH^2}^2
\\
&\quad
+\|\eta (\sigma(\cdot,{X}_\cdot,\sP_{{X}_\cdot})-{\sigma}(\cdot,{X}'_\cdot,\sP_{{X'}_\cdot}))
\|_{\cH^2}^2
+\|\eta (\hat{f}(\cdot,{\Theta}_\cdot,\sP_{{\Theta}_\cdot})-\hat{f}(\cdot,{\Theta}'_\cdot,\sP_{{\Theta}'_\cdot}))
\|_{\cH^2}^2
\bigg\}
\\
&\le C\eta^2\|\Theta-{\Theta}'\|_{\sB}^2,
\end{split}
  \end{align*}
which shows that $\Xi$ is a contraction when $\eta$ is sufficiently small (independent of $\lambda_0$),
and subsequently leads to  the desired claim due to Banach's fixed point theorem.

For any given $\xi, \xi'\in L^2(\cF_0;\sR^n)$,
the desired stochastic stability of \eqref{eq:mfc_fbsde2} follows directly from 
 Lemma \ref{lemma:mono_stab}
 by setting 
$\lambda=1$,
$(\bar{b},\bar{\sigma},\bar{f},\bar{g})=(\hat{b},\sigma,\hat{f},\hat{g})$,
$(\bar{\cI}^b,\bar{\cI}^\sigma,\bar{\cI}^f)=(\cI^{\hat{b}},\cI^\sigma,\cI^{\hat{f}})=0$,
 $\bar{\cI}^g_T=\cI^{\hat{g}}_T=0$ and
 $\bar{\xi}=\xi'$.
Moreover,  
for any given $\xi\in L^2(\cF_0;\sR^n)$,
 by setting 
$\lambda=1$,
$(\bar{b},\bar{\sigma},\bar{f},\bar{g})=0$,
$(\bar{\cI}^b,\bar{\cI}^\sigma,\bar{\cI}^f)=(\cI^{\hat{b}},\cI^\sigma,\cI^{\hat{f}})=0$,
 $\bar{\cI}^g_T=\cI^{\hat{g}}_T=0$,
 $\bar{\xi}=0$
 and $(\bar{X},\bar{Y},\bar{Z})=0$
 in  Lemma \ref{lemma:mono_stab},
we can deduce the 
 estimate that
 \begin{align*}
 \begin{split}
 &\|X\|_{\cS^2}^2+ \|Y\|_{\cS^2}^2+ \|Z\|_{\cH^2}^2
 \\
& \le C\bigg\{\|\xi\|_{L^2}^2+
| \hat{g}(0,\bm{\delta}_{{0}_{n}})|^2
+\| \hat{b}(\cdot, 0,\bm{\delta}_{{0}_{n+n}})\|_{L^2(0,T)}^2
+\| \sigma(\cdot, 0,\bm{\delta}_{{0}_{n}})\|_{L^2(0,T)}^2
\\
&\quad
+\| \hat{f}(\cdot, 0,\bm{\delta}_{{0}_{n+n+nd}})\|_{L^2(0,T)}^2
\bigg\}
\le C(1+\|\xi\|_{L^2}^2),
\end{split}
\end{align*}
which shows the desired moment bound of the processes $(X,Y,Z)$.
}
%
\end{proof}

\section{It\^{o}'s formula along a flow of measures}
\label{sec:ito_formula}

This section reviews the 
notion of the $L$--derivative 
for functions defined on the space of probability measures,
the related function spaces, 
and  the corresponding   It\^o's formula. 
 

The lift of a function $u:\mathcal{P}_2(\mathbb{R}^n) \to \mathbb{R}$ is a function $\Tilde{u}:L^2(\Omega, \mathcal{F},\mathbb{P};\mathbb{R}^n) \to \mathbb{R}$ such that $\Tilde{u}(\theta) = u(\mathbb{P}(\theta))$ for any $\theta \in L^2(\Omega, \mathcal{F},\mathbb{P};\mathbb{R}^n)$. $\Tilde{u}$ is called Fr\'echet differentiable at $\theta_0 \in L^2(\Omega, \mathcal{F},\mathbb{P};\mathbb{R}^n)$, if there exists a linear continuous mapping $D\Tilde{u}(\theta_0):L^2(\Omega, \mathcal{F},\mathbb{P};\mathbb{R}^n)\to \mathbb{R}$ such that as $\lVert \theta-\theta_0\rVert_{L^2}\to 0,$
\begin{equation*}
   \Tilde{u}(\theta)=\Tilde{u}(\theta_0) +\mathbb{E}\big[D\Tilde{u}(\theta_0)\cdot(\theta-\theta_0)\big]+o(\lVert \theta-\theta_0\rVert_{L^2}).
\end{equation*}
The Fr\'echet derivative $[D\Tilde{u}](\theta_0)$ is viewed as an element of $L^2(\Omega, \mathcal{F},\mathbb{P};\mathbb{R}^n)$ so that by Riesz Theorem, $[D\Tilde{u}](\theta)(\phi) = \mathbb{E}[D\Tilde{u}(\theta)\cdot(\phi)]$ and can be written as  
\begin{equation}\label{L-derivative}
    D\Tilde{u}(\theta_0) = \partial_{\mu}u(\mathbb{P}_{\theta_0})(\theta_0),
\end{equation}
for some Borel function $\partial_{\mu}u(\mathbb{P}_{\theta_0}):\mathbb{R}^n \to \mathbb{R}^n$, see for example \cite{carmona2018a}.
\begin{Definition}{\cite[Definition 2.1]{GuoPhaWei}}
$u$ is L-differentiable at $\mu_0 = \mathbb{P}_{\theta_0} \in \mathcal{P}_2(\mathbb{R}^n)$ if its lift function $\Tilde{u}$ is Fr\'echet differentiable at $\theta_0$. The representation of the L-derivative of $u$ at $\mu_0$ along variable $\theta_0$ is then function $\partial_{\mu}u(\mu_0)(.)$ as in \ref{L-derivative}. 
\end{Definition}


The following function space 
will be used in It\^{o}'s formula. 

\begin{Definition}{\cite[Definition 3.2]{CosGozKha}}\label{D1}
The space
$C^{1,2}_2([0,T] \times \mathcal{P}_2(\mathbb{R}^n))$ consists of functions $v:[0,T] \times \mathcal{P}_2(\mathbb{R}^n)\to\mathbb{R}$ that satisfy the following properties:
  \begin{enumerate}[(1)]
      \item The lifting $V$ of $v$ has a continuous Fr\'echet derivative $D_{\xi}V:[0,T]\times L^2(\Omega,\mathcal{F},\mathbb{P};\mathbb{R}^n)\to L^2(\Omega,\mathcal{F},\mathbb{P};\mathbb{R}^n)$, so that for all $(t,\mu)\in [0,T]\times \mathcal{P}_2(\mathbb{R}^n)$ there exists a measurable function $\partial_{\mu}v(t,\mu):\mathbb{R}^n \to \mathbb{R}^n,$ such that $D_{\xi}V(t,\xi) = \partial_{\mu}v(t,\mu)(\xi)$, for all $\xi \in L^2(\Omega,\mathcal{F},\mathbb{P};\mathbb{R}^n)$ with law $\mu$.
      \item The map $[0,T]\times \mathbb{R}^n \times \mathcal{P}_2(\mathbb{R}^n) \ni (t,x,\mu)\to \partial_{\mu}v(t,\mu)(x) \in \mathbb{R}^n$ is jointly continuous.
      \item The functions $\partial_t v$ and $\partial_x \partial_{\mu}v$ exist and the maps $[0,T]\times \mathcal{P}_2({\mathbb{R}}^n)\ni(t,\mu)\to \partial_tv(t,\mu)\in \mathbb{R}$ and $[0,T]\times \mathbb{R}^n \times \mathcal{P}_2(\mathbb{R}^n)\ni(t,x,\mu)\to \partial_x \partial_{\mu}v(t,\mu)(x)\in \mathbb{R}^{n\times n}$ are continuous.
      \item 
      There exists a constant $C\ge 0$
      such that
      for all $(t,x,\mu) \in [0,T]\times \mathbb{R}^n \times \mathcal{P}_2(\mathbb{R}^n)$, 
   $ 
    |\partial_{\mu}v(t,\mu)(x)|+|\partial_x\partial_{\mu}v(t,\mu)(x)|\le C(1+|x|^2).
$ 
  \end{enumerate}   
\end{Definition}

\begin{Theorem}{\cite[Proposition 5.102]{carmona2018a}}\label{timemeasureito} 
Let $t \in [0,T], \xi \in L^2(\Omega,\mathcal{F}_t,\mathbb{P};\mathbb{R}^n) $, and $u \in C^{1,2}_2([0,T] \times \mathcal{P}_2(\mathbb{R}^n))$. Also, for $0\le t \le s \le T$ let 
\[
X_s = \xi + \int_t^s b_r \diff r + \int_t^s \sigma_r \diff W_r,
\]
and $\mu_t$ denote the marginal distribution $\mu_t = \mathbb{P}_{X_t}$. Then for all $s\in[t,T],$ it holds that
\begin{equation}\label{Ito1}
\begin{split}
    u(s,\mu_s) = u(t,\mu_t)&+\int_t^s\partial_tu(r,\mu_r)\diff r + \int_t^s \mathbb{E}\Bigl[ \langle b_r, \partial_{\mu}u(r,\mu_r)(X^{\alpha}_r) \rangle\Bigr] \diff r \\
    & +\frac{1}{2} \int_t^s \mathbb{E}\Bigl[ \text{trace}\big( \sigma_r \sigma_r^\intercal \partial_x\partial_{\mu}u(r,\mu_r)(X^{\alpha}_r) \big)\Bigr] \diff r.
\end{split}
\end{equation}
\end{Theorem}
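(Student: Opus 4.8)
The plan is to reduce the Itô formula along a flow of measures to the classical finite-dimensional Itô formula through a finite-particle (empirical-measure) projection, and then pass to the mean-field limit. First I would enlarge the probability space to carry a sequence $(\xi^j,(b^j_r)_r,(\sigma^j_r)_r,(W^j_r)_r)_{j\ge1}$ of i.i.d.\ copies of $(\xi,(b_r)_r,(\sigma_r)_r,(W_r)_r)$, and set $X^j_s=\xi^j+\int_t^s b^j_r\,\diff r+\int_t^s\sigma^j_r\,\diff W^j_r$, so that the $X^j$ are i.i.d.\ copies of $X$, each with marginal $\mu_r=\mathbb{P}_{X_r}$. I would then introduce the empirical measures $\bar\mu^N_r\coloneqq\frac1N\sum_{j=1}^N\delta_{X^j_r}$ and the map $U_N:[0,T]\times(\mathbb{R}^n)^N\to\mathbb{R}$, $U_N(r,x^1,\dots,x^N)\coloneqq u\big(r,\frac1N\sum_{j=1}^N\delta_{x^j}\big)$, so that $u(r,\bar\mu^N_r)=U_N(r,X^1_r,\dots,X^N_r)$ becomes a function of an ordinary $Nn$-dimensional Itô process.

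\textbf{Finite-particle Itô expansion.} Differentiating $U_N$ via its lifting gives $\partial_{x^j}U_N=\frac1N\,\partial_\mu u(r,\cdot)(x^j)$ and, provided $u$ also has a continuous second $L$-derivative, $\partial^2_{x^jx^j}U_N=\frac1N\,\partial_x\partial_\mu u(r,\cdot)(x^j)+\frac1{N^2}\,\partial^2_\mu u(r,\cdot)(x^j,x^j)$, while $\partial_rU_N=\partial_tu$. The key structural point is that $W^j$ and $W^l$ are independent for $j\ne l$, so the cross-variations $\langle X^j,X^l\rangle$ vanish and all off-diagonal second-order contributions disappear when the classical Itô formula is applied to $U_N$. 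Collecting terms one obtains
\begin{equation*}
\begin{split}
u(s,\bar\mu^N_s)-u(t,\bar\mu^N_t)&=\int_t^s\partial_tu(r,\bar\mu^N_r)\,\diff r+\int_t^s\frac1N\sum_{j=1}^N\big\langle b^j_r,\partial_\mu u(r,\bar\mu^N_r)(X^j_r)\big\rangle\,\diff r\\
&\quad+\frac12\int_t^s\frac1N\sum_{j=1}^N\operatorname{tr}\!\big(\sigma^j_r(\sigma^j_r)^\intercal\,\partial_x\partial_\mu u(r,\bar\mu^N_r)(X^j_r)\big)\,\diff r+M^N_s+R^N_s,
\end{split}
\end{equation*}
where $M^N$ is a martingale with $\mathbb{E}[\langle M^N\rangle_s]=O(1/N)$ and $R^N_s=O(1/N)$ collects the $\partial^2_\mu u$ terms, both controlled by the quadratic-growth bounds and the $L^2$ moment estimates on $(X^j)$.

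\textbf{Mean-field limit.} Next I would send $N\to\infty$: by the strong law of large numbers for empirical measures in $\mathcal P_2(\mathbb{R}^n)$ (using $\sup_r\|X^j_r\|_{L^2}<\infty$), $\bar\mu^N_r\to\mu_r$ in $\mathcal W_2$ almost surely, uniformly in $r\in[t,s]$. Combining this with the joint continuity of $\partial_tu$, $\partial_\mu u$, $\partial_x\partial_\mu u$ and the quadratic-growth bound from Definition \ref{D1}, together with the particle moment bounds and dominated convergence, each of the three integral terms converges to the corresponding term in \eqref{Ito1}, while $M^N_s,R^N_s\to0$ and $u(\cdot,\bar\mu^N_\cdot)\to u(\cdot,\mu_\cdot)$. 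This establishes \eqref{Ito1} for every $u\in C^{1,2}_2$ that additionally admits a continuous, quadratically bounded $\partial^2_\mu u$.

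\textbf{Main obstacle: removing the second-order regularity.} The delicate point is that Definition \ref{D1} does not postulate the existence of $\partial^2_\mu u$, which is nonetheless exactly what makes $U_N$ twice continuously differentiable in $\mathbf x$ and hence what the finite-particle Itô formula above requires. To close this gap I would mollify $u$: construct a family $(u_\varepsilon)_{\varepsilon>0}$ in $C^{1,2}_2$ possessing all second-order $L$-derivatives (e.g.\ by a Lions-type smoothing of the lifted function along a finite-dimensional Gaussian shift, or by the regularization in Carmona--Delarue) such that $\partial_tu_\varepsilon$, $\partial_\mu u_\varepsilon$ and $\partial_x\partial_\mu u_\varepsilon$ converge locally uniformly to the corresponding derivatives of $u$, with uniform quadratic-growth bounds. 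Applying the formula already proven to each $u_\varepsilon$ and letting $\varepsilon\downarrow0$---which is legitimate precisely because \eqref{Ito1} involves only $\partial_tu$, $\partial_\mu u$ and $\partial_x\partial_\mu u$, not $\partial^2_\mu u$---yields the statement for $u$. I expect this regularization step, and the attendant verification of the continuity and uniform-integrability conditions needed to interchange the limits with the expectations, to be the principal technical burden; everything else reduces to the classical Itô calculus.
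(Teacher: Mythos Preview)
The paper does not supply its own proof of this theorem: it is stated in Appendix~\ref{sec:ito_formula} purely as a citation of \cite[Proposition~5.102]{carmona2018a}, with no accompanying argument. Your proposal is therefore not being compared against anything in the paper itself, but it does track the standard proof in the cited reference---empirical-measure projection onto a finite particle system, classical It\^o on $(\mathbb{R}^n)^N$, and passage to the mean-field limit---so the approach is sound and appropriate.
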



\section*{Acknowledgements}

Wolfgang Stockinger was supported by a special Upper Austrian  Government grant, and Maria Olympia Tsianni was supported by the Additional Funding Programme for Mathematical Sciences, delivered by EPSRC (EP/V521917/1) and the Heilbronn Institute for Mathematical Research.

\printbibliography

\end{document}